\newtheorem{thm}{Theorem}[section]
\newtheorem{prop}[thm]{Proposition}
\newtheorem{cor}[thm]{Corollary}
\newtheorem{conj}[thm]{Conjecture}
\theoremstyle{definition}
\newtheorem{defn}[thm]{Definition}
\theoremstyle{remark}
\newtheorem{remarks}[thm]{Remarks}
\newtheorem{remark}[thm]{Remark}
\numberwithin{equation}{section}
\newcommand{\ZZ}{\mathbb{Z}}
\newcommand{\NN}{\mathbb{N}}
\newcommand{\tv}{\operatorname{d_{TV}}}
\begin{document}


\title{
Leading Digits of Mersenne Numbers
}

\author[Cai]{Zhaodong Cai}
\email[Zhaodong Cai]{zhcai@sas.upenn.edu}

\author[Faust]{Matthew Faust}
\email[Matthew Faust]{mhfaust2@illinois.edu}

\author[Hildebrand]{A.J. Hildebrand}
\email[A.J. Hildebrand (corresponding author)]{ajh@illinois.edu}

\author[Li]{Junxian Li}
\email[Junxian Li]{jli135@illinois.edu}

\author[Zhang]{Yuan Zhang}
\email[Yuan Zhang]{yz3yq@virginia.edu}

\address{Department of Mathematics\\
University of Illinois\\
Urbana, IL 61801\\
USA}

\thanks{This work is based on a research project
carried out at the \emph{Illinois Geometry Lab} in Spring 2016 and Fall
2016.  The experimental results in this paper were generated using the
\emph{Illinois Campus Computing Cluster}, a high performance
computing platform at the University of Illinois.}

\subjclass{11K31 (11K06, 11N05, 11Y55)}

\date{10/20/2018}

\maketitle

\begin{abstract}
It has long been known that sequences such as the powers of $2$
and the factorials satisfy Benford's Law; that is, leading digits in
these sequences occur with frequencies given by $P(d)=\log_{10}(1+1/d)$,
$d=1,2,\dots,9$. 
In this paper, we consider the leading digits of the Mersenne numbers
$M_n=2^{p_n}-1$, where $p_n$ is the $n$-th prime. In light of known
irregularities in the distribution of primes, one might expect that the
leading digit sequence of $\{M_n\}$ has \emph{worse} distribution
properties than ``smooth'' sequences with similar rates of growth, such as
$\{2^{n\log n}\}$. Surprisingly, the opposite seems to be the true;
indeed, we present data, based on the first billion terms of the sequence
$\{M_n\}$, showing that leading digits of Mersenne numbers behave in many
respects \emph{more regularly} than those in the above smooth sequences.
We state several conjectures to this effect,  and we provide an heuristic
explanation for the observed phenomena based on classic models for the  
distribution of primes.
\end{abstract}







\section{Introduction}
\label{sec:intro}

\subsection{Benford's Law}
If the leading digits (in base $10$) of the sequence
$2$, $4$, $8$, $16$, $32$, $64$, $128$, $256$, $512$, $1024$, ...
of powers of $2$ are tabulated, one finds that
the digit $1$ occurs around $30.1\%$ of the time, the digit $2$ occurs around
$17.6\%$ of the time, while the digit $9$ occurs only around $4.6\%$ of the
time.  This is an instance of \emph{Benford's Law}, an empirical ``law''
that says that leading digits in many real-world and mathematical
data sets tend to follow the \emph{Benford distribution}, 
depicted in Figure \ref{fig:benford-distribution}, and given by
\begin{equation}
\label{eq:benford}
P(d)
=\log_{10}\left(1+\frac1d\right),\quad d=1,2,\dots,9.
\end{equation}

\begin{figure}[H]
\begin{center}
\includegraphics[width=.5\textwidth]{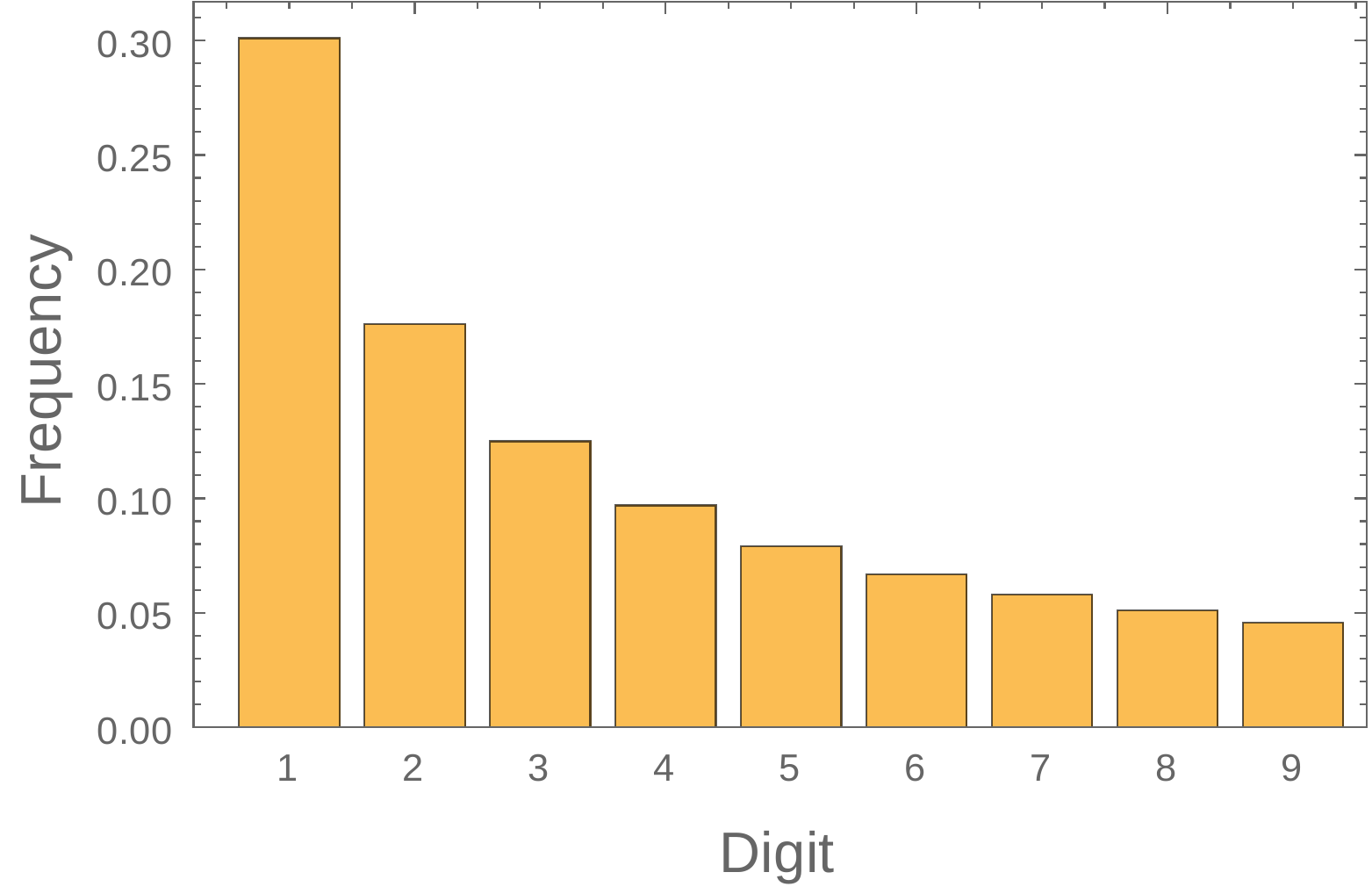}
\end{center}
\caption{The Benford distribution, $P(d)=\log_{10}(1+1/d)$.}
\label{fig:benford-distribution}
\end{figure}

The peculiar first-digit distribution  given by \eqref{eq:benford} 
is named after Frank Benford \cite{benford1938}, who in 1938 compiled extensive
empirical evidence for the ubiquity of this distribution across a wide
range of real-life data sets, though it had been observed some
fifty years earlier by the astronomer Simon Newcomb \cite{newcomb1881}.
In recent decades, Benford's Law has
received renewed interest, in part  because of its applications as a tool
in fraud detection.  For general background on Benford's Law and its
applications we refer to the articles by 
Hill \cite{hill1995} and 
Raimi \cite{raimi1976},
the in-depth survey by Berger and Hill \cite{berger2011},
and the recent books by  
Berger and Hill \cite{berger2015}, Miller \cite{miller2015}, 
and Nigrini \cite{nigrini2012}. 
Additional references can be found in the online bibliographies 
\cite{beebe2017}, \cite{benfordonline}, 
and \cite{hurliman2006}.

\subsection{Benford's Law in mathematics}
From a mathematical point of view, Benford's Law is closely connected
with the theory of \emph{uniform distribution modulo $1$} \cite{kuipers1974}.
In 1977 Diaconis \cite{diaconis1977} used this connection to prove rigorously
that Benford's Law holds  for
a class of exponentially growing sequences which includes the powers of $2$
and the sequence of
factorials.  That is, each of these sequences $\{a_n\}$ satisfies
\begin{equation}
\label{eq:benford-limit}
\lim_{N\to\infty}\frac1N
\#\left\{n\le N: \text{$a_n$ has leading digit $d$}\right\}
=\log_{10}\left(1+\frac1d\right),\quad d=1,2,\dots,9.
\end{equation}
Table \ref{table:benford-2n} illustrates this result for the sequence
of powers of $2$. The agreement between actual leading digit counts and
the expected counts based on the Benford frequencies \eqref{eq:benford}
is uncannily good: The Benford predictions are within $\pm10$ of the
actual counts among the first billion terms of the sequence.\footnote{%
In general, for sequences of the form $\{a^n\}$ the quality of the
agreement between the actual and predicted leading digit counts is
related to diophantine approximation properties of the number $\log_{10}
a$ (see Proposition \ref{prop:benford-ud-mod1} below and \cite[Chapter 2,
Theorem 3.2]{kuipers1974}), but it is also affected by any linear relations
between the numbers $\log_{10}a$ and $\log_{10}d$, $d=1,2,\dots,9$. For 
the sequence $\{2^n\}$, the latter aspect comes into play and accounts
in part for the small errors observed in Table \ref{table:benford-2n}; see 
\cite{benford-error}.} 


\begin{table}[H]
\begin{center}
\begin{tabular}{|c|r|r|r|}
\hline
Digit& Count & Benford Prediction & Error
\\
\hline
\hline
1 & 301029995 & 301029995.66 & -0.66
\\
\hline
2 & 176091267 & 176091259.06 & 7.94
\\
\hline
3 & 124938729 & 124938736.61 & -7.61
\\
\hline
4 & 96910014 & 96910013.01 & 0.99
\\
\hline
5 & 79181253 & 79181246.05 & 6.95
\\
\hline
6 & 66946788 & 66946789.63 & -1.63
\\
\hline
7 & 57991941 & 57991946.98 & -5.98
\\
\hline
8 & 51152528 & 51152522.45 & 5.55
\\
\hline
9 & 45757485 & 45757490.56 & -5.56
\\
\hline
\end{tabular}
\end{center}
\caption{%
Actual versus predicted counts of leading digits among the first $10^9$ terms
of the sequence $\{2^n\}$. The predicted counts are given by 
$NP(d)$, where $N=10^9$ is the number of terms, $d$ is the
digit, and $P(d)=\log_{10}(1+1/d)$ is the Benford frequency for digit $d$,
given by \eqref{eq:benford}.
}
\label{table:benford-2n}
\end{table}


More recently,
H\"urliman \cite{hurliman2009} investigated Benford's Law for a variety of
classical arithmetic sequences and special numbers such as the Catalan
numbers.  Mass\'e and Schneider \cite{masse2015}
established Benford's Law for a large class of arithmetic sequences defined
by growth conditions. In particular, they showed that Benford's Law holds 
for sequences of the form $\lambda n^{R(n)}e^{S(n)}$, where $\lambda>0$ and
$R(n)$ and $S(n)$ are polynomials satisfying some mild conditions. 
Examples covered by their results include the sequences $\{2^{n^h}\}$,
where $h$ is a fixed positive integer, and $\{n^{n^\alpha}\}$, where
$\alpha>0$.

Table \ref{table:benford-smooth-sequences} gives a numerical illustration
of these results, based on leading digit data for the first $10^9$ terms of
the sequences $\{2^{n^2}\}$, $\{2^{n\log n}\}$, and $\{n^n\}$.
In all three cases, the deviation between the predicted and actual counts
of leading digits is in the order of $10^4$. 
While not nearly as small as the errors for the sequence $\{2^n\}$, these
deviations are comparable to the  squareroot type deviation one would
expect for a random sequence.


\begin{table}[H]
\begin{center}
\begin{tabular}{|c|r|r|r|r|}
\hline
Digit& Benford Prediction & 
$\{2^{n^2}\}$  &
$\{2^{n\log n}\}$  &
 $\{n^n\}$  
\\
\hline
\hline
1 & 301029995.66
& 2954.34 
&16567.34
& 6820.34  
\\
\hline
2 & 176091259.06
& -6673.06
&-11543.06
& -10500.06  
\\
\hline
3 & 124938736.61
& 121.39 
& 16785.39
& -17051.61  
\\
\hline
4 & 96910013.01
& -59.01 
& 3205.99
& -4763.01  
\\
\hline
5 & 79181246.05
&  7.95
& -16409.05
& 20660.95  
\\
\hline
6 & 66946789.63
& 8897.37
& 6000.37
& -3421.63  
\\
\hline
7 & 57991946.98
& -3733.98
& -1566.98
& 21179.02  
\\
\hline
8 & 51152522.45
& 236.55
& -9687.45
& -6807.45  
\\
\hline
9 & 45757490.56
& -1751.56  
& -3352.56
& -6116.56  
\\
\hline
\end{tabular}
\end{center}
\caption{
Deviations from predicted counts for leading digits among the first $10^9$
terms of the sequences $\{2^{n^2}\}$, $\{2^{n\log n}\}$, $\{n^n\}$. 
}
\label{table:benford-smooth-sequences}
\end{table}


\subsection{Limitations of Benford's Law}
Sequences of polynomial or slower rate of growth such as the sequence of
squares do not satisfy Benford's Law in the above asymptotic density sense,
though in many cases Benford's Law can be shown
to hold in some weaker form, for example, with the natural asymptotic
density replaced by other notions of density; see Mass\'e and Schneider
\cite{masse2011} for a survey.

The failure of Benford's Law for sequences of polynomial growth is due to
the fact that the leading digits of such sequences stay constant over long
enough intervals to prevent the asymptotic relation
\eqref{eq:benford-limit} from taking hold.  For example, $n^2$ has leading
digit $1$ whenever $n$ falls into an interval of the form
$[10^k,\sqrt{2}\cdot 10^k)$, $k=1,2,\dots$. If \eqref{eq:benford-limit} 
were to hold, then only a fraction $\log_{10}2\approx 0.301$ of these terms
would have leading digit $1$.

In recent work \cite{local-benford} we exhibited 
another limitation to Benford's Law for arithmetic sequences: 
Namely, exponentially growing sequences such as those in Table
\ref{table:benford-smooth-sequences}
tend to have very poor \emph{local} Benford distribution properties, even
though, from a \emph{global} point of view, they provide an excellent match
to Benford's Law. For example, for the sequence $\{2^{n^h}\}$, where 
$h$ is a positive integer, $k$-tuples of leading digits of consecutive
terms in the sequence do behave ``independently'' when $k\le h$, but not
when $k>h$.

\subsection{Benford's Law for arithmetic sequences}
The current state of knowledge on the validity of Benford's Law for
``smooth'' arithmetic sequences can be summarized as follows:

\begin{itemize}
\item[(I)] \textbf{Sequences such as the squares that grow
at linear or polynomial rate.} Benford's Law does not hold in the usual
asymptotic density sense, though it may hold with respect to other density
notions such as analytic or logarithmic densities \cite{masse2011}.

\item[(II)] \textbf{Sequences such as $\{2^n\}$, $\{2^{n^2}\}$, or 
$\{n^n\}$ that grow at 
faster than polynomial rate, but whose logarithms grow at polynomial rate.}
Large classes of such sequences have been shown to satisfy Benford's Law
\cite{masse2015}. On the other hand, as shown in \cite{local-benford},
such sequences tend to have poor \emph{local} distribution properties,
with the quality of the local fit to Benford's Law being closely tied 
to the rate of growth of the sequence: Faster growing sequences generally
are better behaved at the local level with respect to Benford's Law.

\item[(III)] \textbf{Sequences whose logarithms grow at faster than
polynomial rate.} Extrapolating from the results for the case of polynomial
growth, one may expect such sequences to \emph{generally} satisfy Benford's
Law, both at the global and the local level, in the sense that the associated
leading digit sequence behaves like a sequence of independent
Benford-distributed random variables. This can indeed be shown to be the
case for ``almost all'' doubly exponential sequences \cite{local-benford},
though proofs of Benford's Law for \emph{specific} sequences of doubly
exponential growth such as $\{2^{2^n}\}$ remain elusive; see the
remark at the end of  \cite{masse2015}.  
\end{itemize}

\subsection{Sequences involving prime numbers}
The above-mentioned results focus on the leading digit behavior of 
``smooth'' sequences, i.e., sequences of the form $\{f(n)\}$, where
$f(x)$ is some well-behaved function of $x$.  One can ask similar
questions about sequences that are defined in
terms of prime numbers.  The sequence of prime numbers $\{p_n\}$ 
itself does not satisfy Benford's Law for the same reason that
polynomial sequences do not satisfy this law:  Since $p_n\sim n\log n$
as $n\to\infty$ (see, for example, Theorem 4.1 in \cite{apostol1976}), 
the rate of growth of $\{p_n\}$ is too slow for the asymptotic relation
\eqref{eq:benford-limit} to take hold.  However, a number of authors
have shown that the primes satisfy various weaker forms of this law; see
Whitney \cite{whitney1972}, Schatte \cite{schatte1983}, 
Cohen and Katz \cite{cohen-katz1984},
Fuchs and Letta \cite{fuchs-letta1996}, Luque and Lacasa
\cite{luque-lacasa2009}, 
Eliahou et al. \cite{eliahou2013}, 
and Mass\'e and Schneider \cite{masse2011}. 

In light of the above heuristic, 
it is reasonable to expect that Benford's Law holds
for sufficiently fast growing sequences defined in terms of prime numbers.
Mass\'e and Schneider \cite{masse2014} showed that this is indeed the case  
for the sequence $\{P_n\}$  of \emph{primorial numbers} defined by 
$P_n=\prod_{k=1}^n p_k$.

\subsection{The Mersenne numbers}
In this paper we consider another classic sequence involving prime numbers,
the \emph{Mersenne numbers}, defined as\footnote{We emphasize that we do 
\emph{not} require
$M_n$ to be prime, but we do require the exponent, $p_n$, to be prime.  In
other words, the sequence $\{M_n\}$ is the sequence of candidates for
Mersenne primes.}
\begin{equation}
\label{eq:mersenne-def}
M_n=2^{p_n}-1.
\end{equation}
The first twenty terms of this sequence are given in Table
\ref{table:mersenne-numbers}.


\begin{table}[H]
\begin{center}
\begin{tabular}{|c|r|r|}
\hline
$n$ & $p_n$ & $M_n=2^{p_n}-1$  
\\
\hline
\hline
1&  2&  3
\\ \hline
2&  3&  7
\\ \hline
3&  5&  31
\\ \hline
4&  7&  127
\\ \hline
5&  11&  2047
\\ \hline
6&  13&  8191
\\ \hline
7&  17&  131071
\\ \hline
8&  19&  524287
\\ \hline
9&  23&  8388607
\\ \hline
10&  29&  536870911
\\
\hline
\end{tabular}
\hspace{1em}
\begin{tabular}{|c|r|r|}
\hline
$n$ & $p_n$ & $M_n=2^{p_n}-1$  
\\
\hline
\hline
11& 31& 2147483647
\\ \hline
12& 37& 137438953471
\\ \hline
13& 41& 2199023255551
\\ \hline
14& 43& 8796093022207
\\ \hline
15& 47& 140737488355327
\\ \hline
16& 53& 9007199254740991
\\ \hline
17& 59& 576460752303423487
\\ \hline
18& 61& 2305843009213693951
\\ \hline
19& 67& 147573952589676412927
\\ \hline
20& 71& 2361183241434822606847
\\
\hline
\end{tabular}
\end{center}
\caption{%
The first $20$ Mersenne numbers, $M_n=2^{p_n}-1$.
}
\label{table:mersenne-numbers}
\end{table}

Since $p_n\sim n\log n$, the sequence $\{M_n\}$ has a rate of growth
between that of the sequences $\{2^n\}$ and $\{2^{n^2}\}$, and very
similar to that of the sequence $\{2^{n\log n}\}$.  In terms of the above
hierarchy, it is a sequence of type (II).  Thus, one might expect the
sequence $\{M_n\}$ to have excellent global, but poor local distribution
properties with respect to Benford's Law. 

From a \emph{global} point of view, the behavior is indeed as expected. 
We show that the sequence $\{M_n\}$ satisfies Benford's Law
and we provide numerical evidence suggesting that the quality of the fit is
comparable to that of other sequences of similar rate of growth.

On the other hand, the \emph{local} distribution of leading digits of
$\{M_n\}$ is completely different from that of other sequences of type
(II), and more like that of sequences of type (III).  An illustration
of these differences is given in Figure
\ref{fig:benford-waits-overview}, which shows the distribution of
``waiting times'' between successive occurrences of $1$ as leading
digit for the sequences $\{M_n\}$, $\{2^n\}$, $\{2^{n\log n}\}$, and
$\{2^{n^2}\}$. Only the Mersenne sequence, $\{M_n\}$, exhibits the
geometric waiting time distribution that one would expect for a random
sequence of Benford-distributed digits.  The other three sequences have
distinctly different waiting time distributions.\footnote{That the digit
$1$ waiting times for the sequence $\{2^n\}$ are either $3$ or $4$ is easy
to see by tracking leading digits after successive multiplications by $2$;
for the other three sequences, however, the set of possible values 
of the waiting times seems to have a much more complicated structure.}

\begin{figure}[H]
\begin{center}
\includegraphics[width=.45\textwidth]{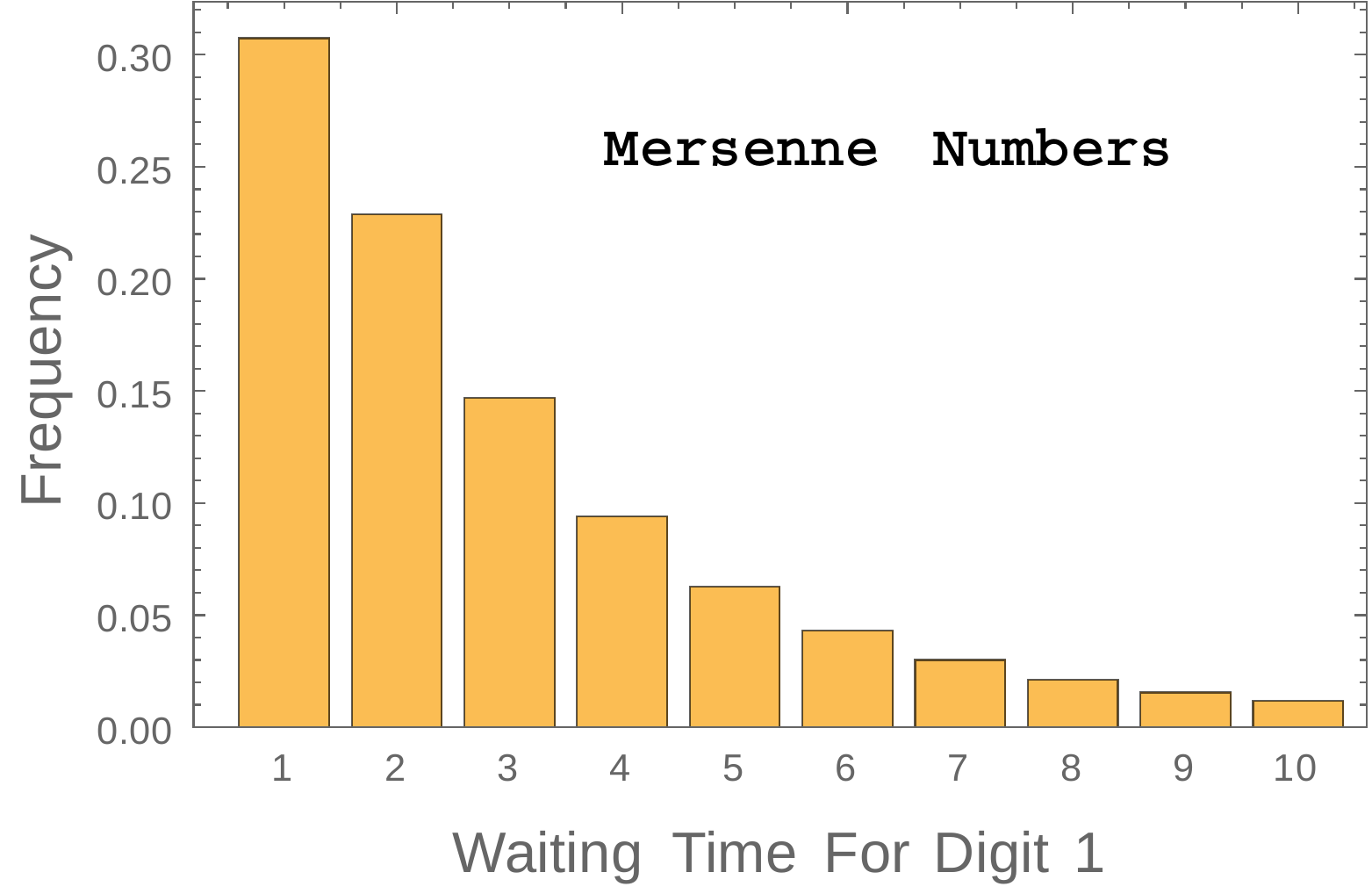}
\hspace{1em}
\includegraphics[width=.45\textwidth]{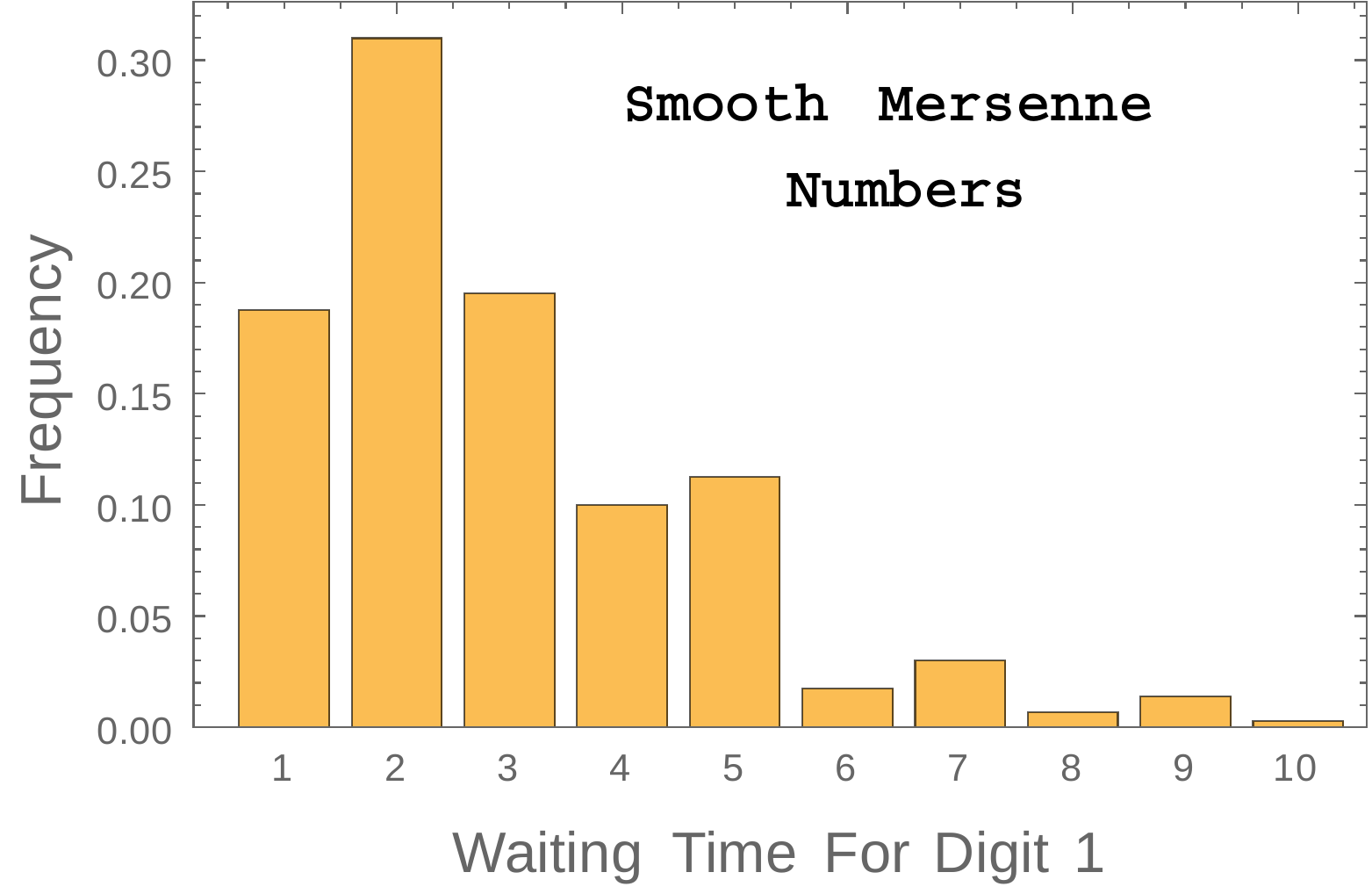}
\\[1ex]
\includegraphics[width=.45\textwidth]{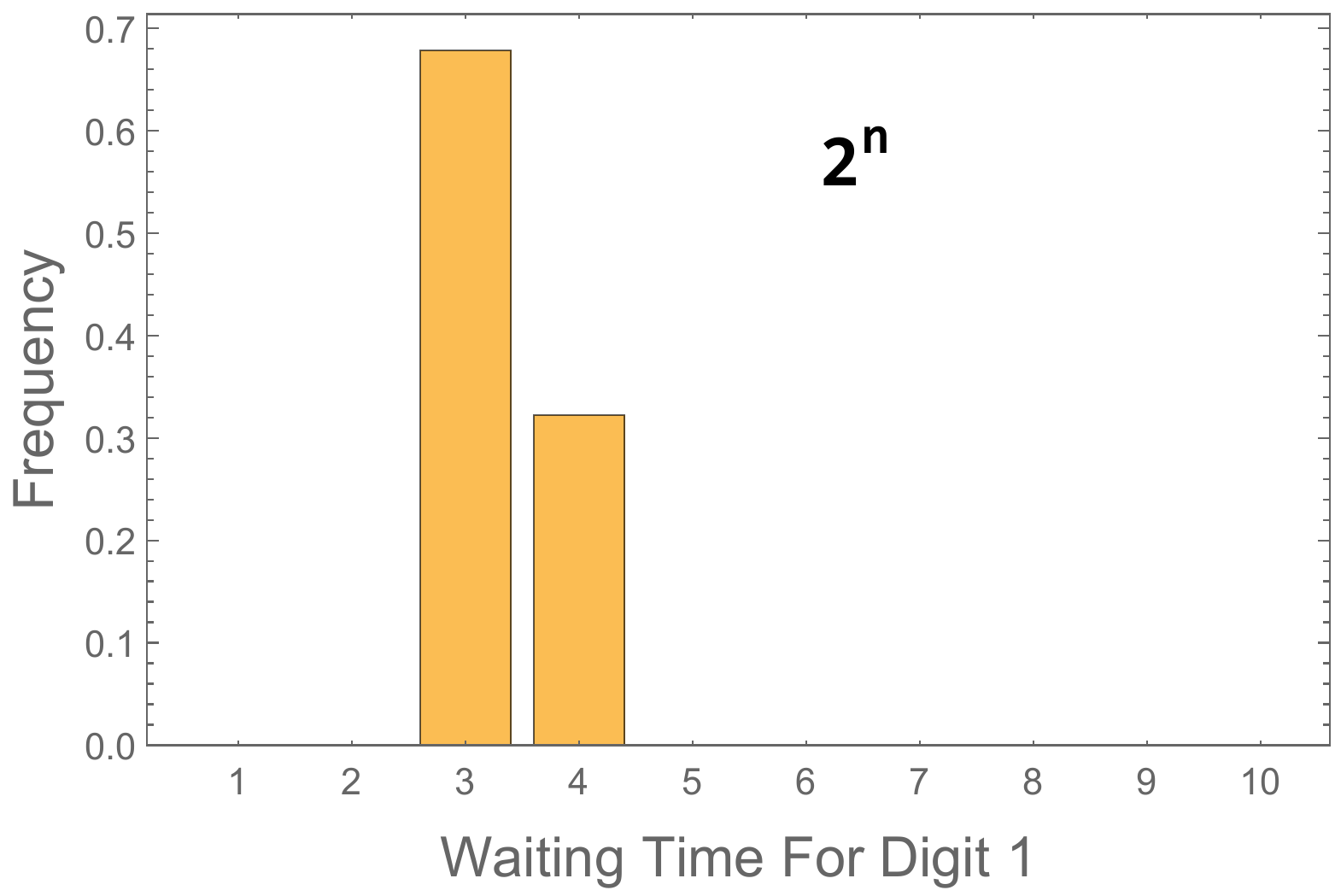}
\hspace{1em}
\includegraphics[width=.45\textwidth]{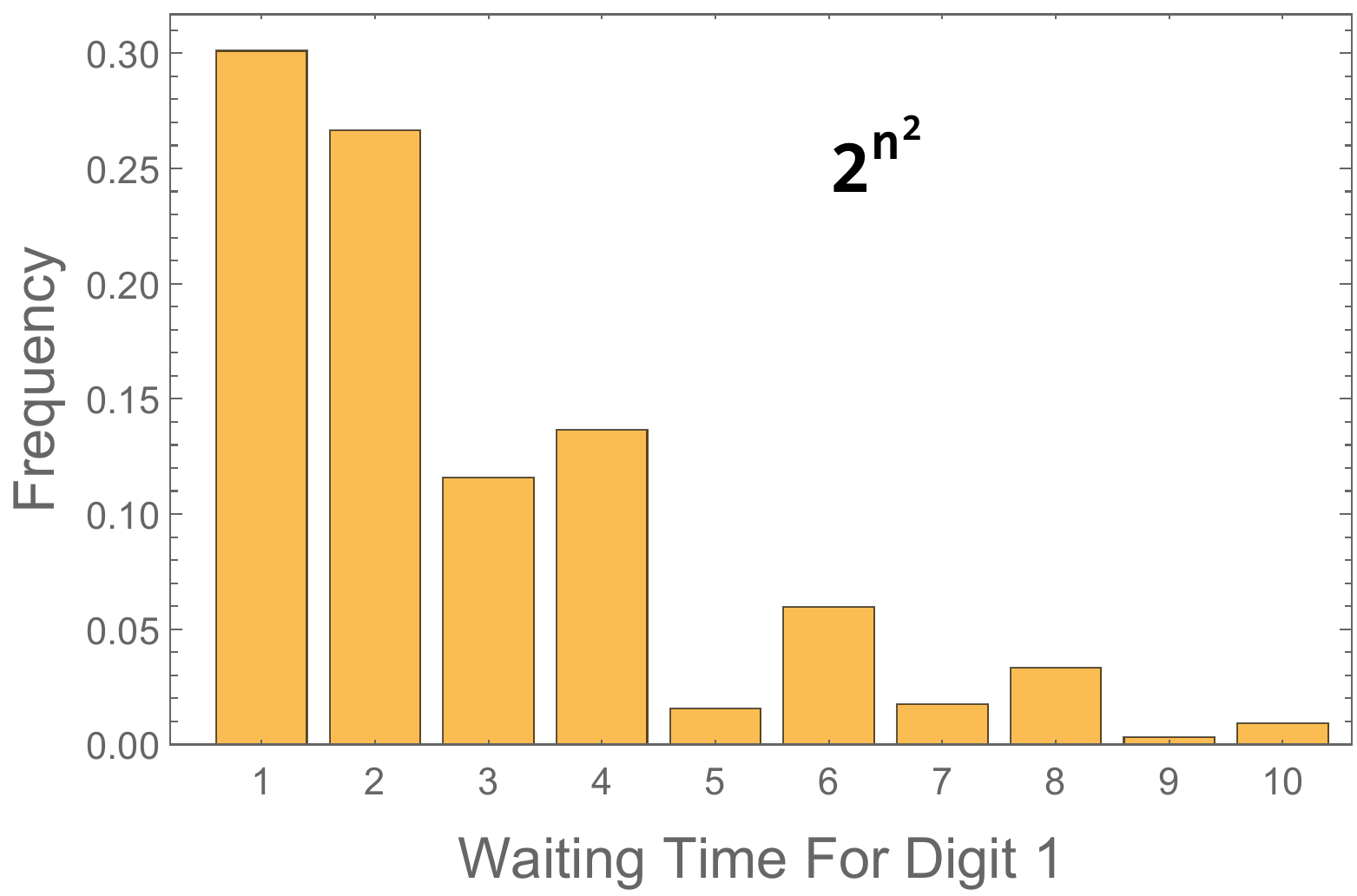}
\end{center}
\caption{The distribution of ``waiting times'' between occurrences of 
leading digit $1$
for the sequence
of Mersenne numbers $M_n=2^{p_n}-1$ (top left chart), 
and for three ``smooth'' sequences with similar rates of growth: 
$\{2^{n\log n}\}$ (top right), $\{2^n\}$ (bottom left), 
and $\{2^{n^2}\}$ (bottom right). Of these four sequences only the
Mersenne sequence exhibits a geometric waiting time
distribution.
}
\label{fig:benford-waits-overview}
\end{figure}

This surprising discrepancy between the \emph{local} Benford distribution
properties of the sequence $\{M_n\}$ and similar ``smooth'' sequences is the
main finding of this paper.  We conjecture that, in contrast to 
smooth sequences such as those in Tables \ref{table:benford-2n} and 
\ref{table:benford-smooth-sequences},
the sequence of leading digits of 
$\{M_n\}$ behaves like a sequence of independent Benford-distributed random
variables.  We provide numerical evidence in support of this
conclusion, and we give an heuristic explanation for the apparent
discrepancy in the behaviors of $\{M_n\}$ and similar smooth sequences.

\subsection{Outline of paper}

The remainder of this paper is organized as follows: In Section
\ref{sec:summary}  we state
our main conjectures and results. In Section \ref{sec:data}
we describe the numerical
data on which these conjectures are based and the approach we have taken 
to generate the data. 
In Section \ref{sec:Mn-global}
we show that $\{M_n\}$ satisfies Benford's Law,
and we provide numerical data on the quality of the fit. 
In Section \ref{sec:Mn-local}  we present experimental
data supporting our conjectures on the local distribution of leading digits
of $\{M_n\}$. In Section \ref{sec:smooth-local}
we show that smooth sequences with similar rates of
growth do not satisfy these conjectures.
Section \ref{sec:conclusion}
contains a summary of our findings, along with some
remarks and open questions.


\section{Summary of Results and Conjectures}
\label{sec:summary}

\subsection{Notations and definitions}
Given a positive real number $x$, we denote by $D(x)$ the leading (i.e.,
most significant) digit of $x$ in base $10$.   More precisely, we define
$D(x)$ by 
\begin{equation}
\label{eq:D(x)}
D(x)=d\Longleftrightarrow d\cdot 10^{k}\le x < (d+1)10^{k}\quad
\text{for some $k\in\ZZ$}
\end{equation}
for $d\in\{1,2,\dots,9\}$.  Note that this definition does not
require $x$ to be an integer; for example, we
have $D(\pi)=3$ and $D(0.0314)=3$.
We let $P(d) =\log_{10}(1+1/d)$ denote the Benford frequency for digit
$d$, as defined in \eqref{eq:benford}.

\begin{defn}[Global Benford Distribution]
\label{def:benford}
A sequence $\{a_n\}$ of positive real numbers
is said to be \emph{Benford distributed} (or, equivalently, said to
satisfy \emph{Benford's Law})  if 
\begin{equation}
\label{eq:benford3}
\lim_{N\to\infty} \frac1N\#\{n\le N: D(a_n)=d\}=P(d)\quad
\text{for $d=1,2,\dots,9$.}
\end{equation}
\end{defn}

\begin{defn}[Local Benford Distribution]
\label{def:local-benford}
Let $k$ be a positive integer. 
A sequence $\{a_n\}$ of positive real numbers 
is called \emph{locally Benford distributed of
order $k$} if 
\begin{align}
\label{eq:local-benford}
\lim_{N\to\infty} &\frac1N\#\{n\le N: D(a_{n+i})=d_i\quad 
(i=0,1,\dots,k-1)\}
\\
\notag
&\qquad
= P(d_0)P(d_1)\dots P(d_{k-1})
\quad \text{for $d_i= 1,\dots,9$ ($i=0,1,\dots,k-1$).} 
\end{align}
\end{defn}

\begin{remarks}
\mbox{}
(1) Definition \ref{def:benford} is one of several common definitions 
of Benford's Law used in the literature. We chose this particular version
over others in the
literature because of its simplicity and intuitiveness.

(2) The case $k=1$ in \eqref{eq:local-benford} reduces to the definition
\eqref{eq:benford3} of a (global) Benford distributed sequence.  
It is immediate from the definition that 
a sequence that is locally Benford distributed of order $k$
is also locally Benford distributed of any order $k'\le k$.
Thus, the concept of local Benford distribution of a sequence refines that
of Benford distribution and establishes a hierarchy of classes of
sequences with successively stronger local distribution properties.

(3) The above definitions can be extended in a natural way 
to other bases, and we expect that most of our results and conjectures
remain valid for such generalized versions of Benford's Law. We decided to
focus on the standard case of base $10$ in order to avoid unnecessary
notational complications. All of the features we expect to hold in the
general case are already present in base $10$.

\end{remarks}

Given a sequence $\{a_n\}$ of positive real numbers
and $d\in\{1,2,\dots,9\}$, we define sequences   $\{n_i^{(d)}\}$ by
\begin{align}
\label{eq:n_i}
\{n_1^{(d)}<n_2^{(d)}<n_3^{(d)}<\cdots  \} &= \{n\in\NN: D(a_n)=d\},
\end{align}
and we let 
\begin{align}
\label{eq:w_i}
w_i{(d)}&=n_{i+1}^{(d)}-n_i^{(d)}.
\end{align}
In other words, the numbers $n_i^{(d)}$ are the successive indices $n$ at
which $a_n$ has leading digit $d$, and the numbers 
$w_i^{(d)}$ are the ``waiting times'', or gaps,
between these occurrences. 

If $\{a_n\}$ is Benford distributed, then the numbers 
$n_i^{(d)}$ occur with asymptotic frequency $P(d)$, so the average gap
between these numbers is $1/P(d)$, i.e., we have
\begin{equation}
\label{eq:w-average}
\lim_{N\to\infty} \frac1N\sum_{i\le N} w_i(d) = \frac{1}{P(d)}
\quad\text{for $d=1,2,\dots,9$.} 
\end{equation}
In fact, it is not hard to see that the converse is also true.  That is,
\eqref{eq:w-average} holds if and only if $\{a_n\}$ is Benford
distributed.

In general, the average statement \eqref{eq:w-average} is all we can
say about the waiting times of a sequence that is Benford distributed. 
However, 
for sequences that are locally ``well behaved'' we expect more to be true: 
Namely, we expect the waiting times between these occurrences to have
geometric distribution with mean $1/P(d)$. 
We thus make the following definition.

\begin{defn}[Benford Distributed Waiting Times]
\label{def:benford-waits}
A sequence $\{a_n\}$ of positive real numbers 
is said to have \emph{Benford distributed waiting times} if
\begin{align}
\label{eq:w-distribution}
\lim_{N\to\infty} \frac1N\#\left\{i\le N: w_i(d)=k\right\}
&= P(d)(1-P(d))^{k-1}
\\
\notag
&\qquad\text{for $d=1,2,\dots,9$ and $k=1,2,\dots$.} 
\end{align}
\end{defn}

It is not hard to see that a sequence that is locally Benford distributed
of any order $k$ has Benford distributed waiting times.
Thus, we have the chain of implications:
\begin{gather*}
\text{Locally Benford distributed of any order $k$,
\eqref{eq:local-benford}}
\\
\Downarrow
\\
\text{Benford distributed waiting times, \eqref{eq:w-distribution}}
\\
\Downarrow
\\
\text{Average waiting time property, \eqref{eq:w-average}}
\\
\Updownarrow
\\
\text{Benford's Law, \eqref{eq:benford3}}.
\end{gather*}

\subsection{Global distribution properties}  
Recall the definition of the Mersenne numbers: 
\begin{equation*}
M_n=2^{p_n}-1,
\end{equation*}
where $p_n$ denotes the $n$-th prime. 

\begin{thm}[Benford Law for $\{M_n\}$]
\label{thm:Mn-global}
The sequence $\{M_n\}$ is Benford-distributed
i.e., satisfies \eqref{eq:benford3}.
\end{thm}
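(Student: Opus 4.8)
The plan is to translate the statement into a question about uniform distribution modulo $1$ and then invoke the classical equidistribution of $\{p_n\alpha\}$ for irrational $\alpha$. The starting point is the standard observation that $D(x)=d$ precisely when the fractional part $\{\log_{10}x\}$ lies in the interval $[\log_{10}d,\log_{10}(d+1))$, whose length is exactly $P(d)$. Consequently, $\{M_n\}$ satisfies Benford's Law \eqref{eq:benford3} if and only if the sequence $\{\log_{10}M_n\}$ is uniformly distributed modulo $1$ (see \cite{kuipers1974}); I would record this equivalence first, as it reduces the theorem to a single equidistribution statement.

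Next I would separate the ``main term'' from a negligible error. Writing
$$\log_{10}M_n=\log_{10}\left(2^{p_n}-1\right)=p_n\log_{10}2+\log_{10}\left(1-2^{-p_n}\right),$$
the second summand tends to $0$ as $n\to\infty$, and in fact does so with exponential speed. Since adding a sequence that converges to $0$ does not affect whether a sequence is uniformly distributed modulo $1$, the problem reduces to showing that $\{p_n\alpha\}$ is uniformly distributed modulo $1$, where $\alpha=\log_{10}2$. Here I would note that $\alpha$ is irrational: an equation $\log_{10}2=a/b$ would force $2^b=10^a$, which is impossible for positive integers $a,b$ by unique factorization.

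To establish the equidistribution of $\{p_n\alpha\}$, I would apply Weyl's criterion: it suffices to prove that for every nonzero integer $h$,
$$\frac1N\sum_{n\le N}e\!\left(h\alpha p_n\right)\longrightarrow 0\qquad(N\to\infty),$$
where $e(t)=e^{2\pi i t}$. Since $h\alpha$ is again irrational, this is exactly the statement that the exponential sum over primes $\sum_{p\le x}e(\beta p)$ is $o(\pi(x))$ for irrational $\beta$ — the classical theorem of Vinogradov on the uniform distribution of $p\alpha$ modulo $1$. Converting the sum over $n\le N$ into a sum over $p\le p_N$ and using $\pi(p_N)=N$, this bound yields the desired limit, and hence the equidistribution of $\{\log_{10}M_n\}$.

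The routine reductions — the leading-digit/u.d.-mod-$1$ equivalence, the exponential decay of the error term, and the irrationality of $\log_{10}2$ — are all straightforward. The essential arithmetic input, and the only genuinely deep ingredient, is Vinogradov's estimate for exponential sums over primes. I expect that to be the crux of the argument: everything else is bookkeeping around that single nontrivial bound.
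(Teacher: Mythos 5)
Your proposal is correct and follows essentially the same route as the paper: both reduce the theorem via the Diaconis criterion to uniform distribution mod $1$ of $\{p_n\log_{10}2\}$, absorb the $\log_{10}(1-2^{-p_n})=o(1)$ term by noting that $o(1)$ perturbations preserve equidistribution, and rest the whole argument on Vinogradov's theorem (which you unfold into Weyl's criterion plus the exponential sum bound, where the paper simply cites it). One minor caution: with the paper's first-digit-only Definition \ref{def:benford}, your ``if and only if'' is an overstatement---uniform distribution mod $1$ implies Benford's Law but not conversely, since the fractional parts need only equidistribute with respect to the nine intervals $[\log_{10}d,\log_{10}(d+1))$---but as you only use the correct implication, the proof is unaffected.
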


This result is a consequence of a theorem of Vinogradov
\cite{vinogradov1947}  and may be known to experts in the field, but we
were unable to find a specific reference in the literature. We will supply
a proof in Section \ref{sec:Mn-global}.

In light of this result, it is natural to consider the size of the error
in the Benford approximation for the frequencies of leading digits of 
$\{M_n\}$, i.e., the quantities 
\begin{equation}
\label{eq:benford-error}
E_d(N)=\#\{n\le N: D(M_n)=d\}- N P(d).
\end{equation}

As Tables \ref{table:benford-2n} and \ref{table:benford-smooth-sequences} 
show, for smooth sequences the size of this error can vary dramatically, 
from a logarithmic or even bounded error in the case of $\{2^n\}$ 
to the squareroot size oscillations typically associated with random
sequences.  Our data (see Section \ref{sec:Mn-global})
suggests that the sequence $\{M_n\}$ falls into the
latter class.

\begin{conj}[Benford Error for $\{M_n\}$] 
\label{conj:benford-error}
The Benford errors $E_d(N)$ defined by \eqref{eq:benford-error} satisfy
\[
E_d(N)=O(N^{1/2+\epsilon})
\quad\text{and}\quad E_d(N)\not=O(N^{1/2-\epsilon})
\]
for any fixed $\epsilon>0$.
\end{conj}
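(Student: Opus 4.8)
The plan is to reduce the Benford error $E_d(N)$ defined in \eqref{eq:benford-error} to the discrepancy of the sequence $\{p_n\alpha\}$, where $\alpha=\log_{10}2$ and $\{\,\cdot\,\}$ denotes the fractional part. First I would observe that the subtraction of $1$ in $M_n=2^{p_n}-1$ is harmless for leading digits: since the final decimal digit of $2^{p_n}$ cycles through $2,4,8,6$ and is therefore nonzero whenever $2^{p_n}\ge 10$, subtracting $1$ never triggers a borrow past the units place, so $D(M_n)=D(2^{p_n})$ for all $n$ except the two terms $n=1,2$ (where $2^{p_n}=4,8$). Writing $2^{p_n}=10^{\lfloor p_n\alpha\rfloor}\,10^{\{p_n\alpha\}}$, one has $D(2^{p_n})=d$ exactly when $\{p_n\alpha\}\in I_d:=[\log_{10}d,\log_{10}(d+1))$, an interval of length $P(d)$; and since $\alpha$ is irrational, $\{p_n\alpha\}$ never lands on a boundary point. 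Hence
\[
E_d(N)=\#\{n\le N:\{p_n\alpha\}\in I_d\}-N\,|I_d|+O(1),
\]
so that $E_d(N)$ is, up to $O(1)$, the signed counting discrepancy of the first $N$ terms of $\{p_n\alpha\}$ over $I_d$.

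Next I would attack this discrepancy through prime exponential sums. By the Erd\H{o}s--Tur\'an inequality, for every integer $H\ge 1$,
\[
|E_d(N)|\ll \frac{N}{H}+\sum_{h=1}^{H}\frac1h\,|S(h,N)|,
\qquad S(h,N):=\sum_{n\le N}e(h\,p_n\alpha),
\]
where $e(x)=e^{2\pi i x}$. The upper bound in the conjecture would then follow from \emph{square-root cancellation} in these sums: if one could show $S(h,N)\ll_\epsilon (hN)^\epsilon N^{1/2}$ uniformly in $h$, then choosing $H=N^{1/2}$ and summing the resulting harmonic-type series gives $|E_d(N)|\ll N^{1/2+\epsilon}$. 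This is precisely the behavior predicted by modeling the phases $\{h\,p_n\alpha\}$ as independent and uniformly distributed, which turns $S(h,N)$ into a sum of $N$ independent unit vectors of typical length $\sqrt{N}$; a Cram\'er-type model for the primes yields the same heuristic.

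The hard part is that this square-root cancellation lies far beyond what is currently provable. Vinogradov's method---the very tool behind Theorem \ref{thm:Mn-global}---gives only bounds of the shape $S(h,N)\ll N^{4/5+\epsilon}$, with the exponent governed by the quality of rational approximations to $\alpha$; and, crucially, the limiting factor is the \emph{minor-arc} estimate, which is not improved even under GRH. Consequently the upper bound $E_d(N)=O(N^{1/2+\epsilon})$ cannot be reached by the standard circle-method toolkit and seems to require genuinely new input, of difficulty comparable to square-root cancellation conjectures for prime (or M\"obius) exponential sums. This is the principal obstacle, and the reason the statement is offered as a conjecture rather than a theorem.

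For the complementary lower bound $E_d(N)\ne O(N^{1/2-\epsilon})$ I would pursue a second-moment, or $\Omega$-result, strategy. The random model suggests that $E_d(N)$ behaves like a mean-zero random walk with per-step variance $P(d)\bigl(1-P(d)\bigr)$, so the law of the iterated logarithm forces $\limsup_N |E_d(N)|/\sqrt{N\log\log N}>0$. To make this rigorous one would estimate a (possibly smoothed) variance $\sum_{N\le X}|E_d(N)|^2$ from below and show it is $\gg X^2$, which by itself rules out $E_d(N)=O(N^{1/2-\epsilon})$. Expanding the square produces correlation sums of the form $\sum_{m,n}e\bigl(h\,p_m\alpha-h'\,p_n\alpha\bigr)$, and isolating the expected diagonal main term while bounding the off-diagonal contribution again reduces to the unresolved equidistribution of $\{p_n\alpha\}$. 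As with the upper bound, the heuristic is transparent but a rigorous proof hinges on cancellation in prime exponential sums that is beyond present technology.
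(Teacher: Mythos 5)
There is an important mismatch of genre here that you have, to your credit, handled honestly: the statement is Conjecture \ref{conj:benford-error}, and the paper contains \emph{no proof of it at all}. The paper's support consists of numerical evidence (the z-scores \eqref{eq:z-score}, which exhibit random-walk behavior and stay within $[-3,3]$ over the sampled range) together with a Cram\'er-model heuristic, plus the brief Remark \ref{rem:vinogradov} noting that error bounds could in principle be extracted from prime exponential sum estimates via the Erd\H{o}s--Tur\'an inequality but would be far from best possible. Your proposal does not prove the conjecture either --- nobody can --- but it correctly diagnoses this and instead supplies exactly the analytic skeleton the paper only gestures at. Your individual reduction steps are sound: the no-borrow argument showing $D(M_n)=D(2^{p_n})$ for $n\ge 3$ is a valid variant of the argument the paper gives in Section \ref{sec:data} (the paper argues via $2^m=(d+1)10^k$ forcing $k=0$; your last-digit-in-$\{2,4,6,8\}$ argument reaches the same conclusion); the identification of $E_d(N)$, up to $O(1)$, with the signed discrepancy of $\{p_n\log_{10}2\}$ over $I_d=[\log_{10}d,\log_{10}(d+1))$ is correct; the Erd\H{o}s--Tur\'an application with $H=N^{1/2}$ does yield $E_d(N)\ll N^{1/2+\epsilon}$ \emph{conditionally} on square-root cancellation $S(h,N)\ll_\epsilon (hN)^\epsilon N^{1/2}$; and your observation that Vinogradov-type minor-arc bounds (the engine behind Theorem \ref{thm:Mn-global}) stall near exponent $4/5$ and are not helped by GRH correctly locates the obstruction.

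Two smaller points. First, for the lower bound, your second-moment route is logically adequate: if $E_d(N)\ll N^{1/2-\epsilon}$ then $\sum_{N\le X}|E_d(N)|^2\ll X^{2-2\epsilon}$, so establishing $\sum_{N\le X}|E_d(N)|^2\gg X^2$ would indeed refute it, and this is weaker than (hence implied by) the law-of-the-iterated-logarithm behavior you invoke heuristically; but as you say, the off-diagonal correlation sums $\sum_{m,n}e(h\,p_m\alpha-h'\,p_n\alpha)$ are just as intractable as the upper-bound input, so this too remains conjectural. The paper's own suggestion --- that the normalized errors may converge in logarithmic density to a standard normal --- is the distributional strengthening of your random-walk picture. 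Second, a point worth making explicit: even the conjectural square-root cancellation must be uniform in $h$ up to $N^{1/2}$, and the truth of such uniformity is entangled with the Diophantine type of $\alpha=\log_{10}2$ (cf.\ the paper's footnote to Table \ref{table:benford-2n} and Remark \ref{rem:vinogradov}); your proposal mentions rational approximations to $\alpha$ in passing but the conjecture as stated implicitly assumes $\alpha$ is not too well approximable. In sum: your write-up is a correct and more precise articulation of why the statement is a conjecture, consistent with, and more detailed than, anything in the paper; it contains no false steps, only honestly flagged unprovable inputs.
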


\subsection{Local distribution properties} 

We now turn to the local distribution properties of the leading digits of
$\{M_n\}$.
We make the following conjectures.

\begin{conj}[Local Benford Distribution of $\{M_n\}$]
\label{conj:local-benford}
The sequence $\{M_n\}$ is locally Benford distributed of any order $k$.
That is, for any positive integer $k$, the 
leading digits of $k$-tuples of consecutive terms in this sequence 
behave like $k$ independent Benford-distributed random variables.  
\end{conj}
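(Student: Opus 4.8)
The plan is to reduce the conjecture to a joint equidistribution statement and then to a family of exponential sums over consecutive primes. Write $\alpha=\log_{10}2$, which is irrational. Since
\[
\log_{10}M_n=p_n\alpha+\log_{10}\left(1-2^{-p_n}\right),
\]
and the correction term is $O(2^{-p_n})$, it is negligible for leading-digit purposes: for all but finitely many $n$ we have $D(M_n)=d$ exactly when the fractional part $\{p_n\alpha\}$ lies in $[\log_{10}d,\log_{10}(d+1))$. Because adding a perturbation that tends to $0$ preserves equidistribution and the relevant box boundaries carry no mass, it suffices to prove that the vectors
\[
\left(\{p_n\alpha\},\{p_{n+1}\alpha\},\dots,\{p_{n+k-1}\alpha\}\right),\qquad n=1,2,\dots,
\]
are equidistributed in the cube $[0,1)^k$ for every fixed $k$; the digit pattern $(d_0,\dots,d_{k-1})$ then occurs with frequency equal to the Lebesgue measure $\prod_i P(d_i)$ of the corresponding box, which is precisely \eqref{eq:local-benford}.

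By Weyl's criterion this equidistribution is equivalent to showing that, for every nonzero integer vector $\mathbf{h}=(h_0,\dots,h_{k-1})$ and with $e(t)=e^{2\pi i t}$,
\[
\frac1N\sum_{n\le N} e\left(\alpha\sum_{i=0}^{k-1}h_i\,p_{n+i}\right)\longrightarrow 0
\qquad(N\to\infty).
\]
For $k=1$ this is exactly Vinogradov's equidistribution theorem for $\{p_n\alpha\}$, the same input that yields Theorem \ref{thm:Mn-global}. To attack $k\ge 2$ I would introduce the gaps $g_m=p_{m+1}-p_m$ and write $p_{n+i}=p_n+\sum_{j=0}^{i-1}g_{n+j}$, so that the exponent splits as $S\,p_n+\sum_{i=1}^{k-1}h_i\sum_{j=0}^{i-1}g_{n+j}$ with $S=\sum_i h_i$. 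When $S\neq 0$, the leading piece $S\,p_n\alpha$ equidistributes by Vinogradov, and the remaining task is to show that the gap-dependent factor does not conspire to reinstate structure. When $S=0$ the exponent is a pure linear form in consecutive prime gaps, and its equidistribution hinges entirely on how $\alpha$ interacts with the fine-scale distribution of those gaps.

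The main obstacle is precisely this last point. Controlling exponential sums over linear forms in \emph{consecutive} prime gaps amounts to quantitative knowledge of prime correlations at the level of the Hardy--Littlewood $k$-tuple conjecture, which is far beyond current unconditional technology; this is why the statement must remain a conjecture rather than a theorem. What I can offer in its place is a heuristic justification via the Cram\'er and Hardy--Littlewood models: at scale $\log n$ the gaps behave like essentially independent random variables, so the coordinates $\{p_{n+i}\alpha\}$ decouple and the vectors fill out $[0,1)^k$ uniformly. This stands in sharp contrast to a smooth sequence $\{f(n)\}$, for which the gaps $f(n+1)-f(n)\approx f'(n)$ vary slowly and deterministically; these rigid increments trap the vectors $(\{f(n)\alpha\},\dots,\{f(n+k-1)\alpha\})$ on lower-dimensional subsets once $k$ exceeds the order governed by the growth of $f$, which is exactly the mechanism behind the poor local behavior of the type (II) sequences recorded above.
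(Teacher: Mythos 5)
You should first be clear about the status of this statement: the paper contains \emph{no proof} of it. It is stated as Conjecture \ref{conj:local-benford} and is supported only by empirical data on pair frequencies, variation distances, and waiting times (Section \ref{sec:Mn-local}) and by a heuristic based on the Cram\'er random model for primes (Sections \ref{sec:data} and \ref{sec:conclusion}); the authors explicitly say that unconditional results seem out of reach given current knowledge of the primes, and that at best a \emph{conditional} proof, following Gallagher's method \cite{gallagher1976} under an appropriate prime $k$-tuples hypothesis, might be possible. So there is no paper proof to compare yours against, and your submission correctly does not claim to be a proof. Judged on its own terms, your reduction is sound: the correction $\log_{10}(1-2^{-p_n})=O(2^{-p_n})$ is negligible (the paper makes the same observation, noting $D(M_n)=D(2^{p_n})$ except for $n=1,2$), the boundaries of the digit boxes carry no Lebesgue mass, and Weyl's criterion correctly converts \eqref{eq:local-benford} into the vanishing of $\frac1N\sum_{n\le N}e\left(\alpha\sum_{i}h_i p_{n+i}\right)$ for all $\mathbf{h}\neq 0$. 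Your identification of the obstruction --- correlations among \emph{consecutive} primes at Hardy--Littlewood strength --- is exactly the paper's stated reason for conjectural status, and your closing contrast with smooth sequences is precisely the mechanism of Theorem \ref{thm:smooth-local-benford}: slowly varying increments $\Delta\log a_n$ rigidly constrain consecutive fractional parts.

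Two caveats on the details. First, your case split on $S=\sum_i h_i$ promises more than it delivers when $S\neq 0$: the sum $\sum_{n\le N}e\bigl(\alpha\bigl(S\,p_n+\sum_{i\ge1}h_i\sum_{j<i}g_{n+j}\bigr)\bigr)$ does not factor, so Vinogradov's theorem for $\{S\alpha p_n\}$ buys nothing until the gap-dependent phase is controlled; the $S\neq 0$ case is therefore just as open as the $S=0$ case, and every $\mathbf{h}$ with $k\ge 2$ requires genuine correlation information. Second, even granting the Hardy--Littlewood conjecture in its usual form (asymptotic counts of prime constellations), your Weyl-sum formulation would not immediately follow: one needs equidistribution of $\{\alpha p_n\}$ \emph{jointly} with prescribed gap patterns, i.e., an exponential-sum-twisted and uniform version of the tuple conjecture. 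This is presumably why the paper's suggested conditional route goes through Gallagher's moment method \cite{gallagher1976} (see also \cite{sound2007}), which converts a uniform tuples hypothesis into Poisson statistics for gaps in short intervals and is better adapted to deriving \eqref{eq:local-benford} than bounding your gap-linear-form Weyl sums directly. In summary: no error, an honestly flagged and correctly located gap, and a reduction that is a reasonable --- though probably not the most efficient --- framing of what a conditional proof would require.
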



\begin{conj}[Benford Waiting Times for $\{M_n\}$]
\label{conj:benford-waits}
The sequence $\{M_n\}$ has Benford-distributed waiting times.
That is, the waiting
times between occurrences of leading digit $d$ behave like 
geometric random variables with parameter $p=P(d)$.
\end{conj}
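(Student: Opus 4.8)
The plan is to deduce Conjecture~\ref{conj:benford-waits} from the local Benford distribution of $\{M_n\}$, exploiting the chain of implications recorded above, which guarantees that a sequence locally Benford distributed of every order has Benford-distributed waiting times. Concretely, the event $w_i(d)=k$ records a pattern of length $k+1$ in the leading-digit sequence: an occurrence of $d$, then $k-1$ consecutive non-occurrences, then another $d$. Counting the $n\le M$ that begin such a pattern gives, under local Benford distribution of order $k+1$, an asymptotic count $M\,P(d)^2(1-P(d))^{k-1}$; dividing by the number $\sim M\,P(d)$ of occurrences of $d$ up to $M$ produces exactly the geometric frequency $P(d)(1-P(d))^{k-1}$ of \eqref{eq:w-distribution}. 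So the whole problem collapses onto Conjecture~\ref{conj:local-benford}, and the remainder of the plan is devoted to that.

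To attack local Benford distribution I would first pass from digits to fractional parts. Setting $\alpha=\log_{10}2$, which is irrational, one has $D(M_n)=d$ if and only if $\{p_n\alpha\}\in I_d:=[\log_{10}d,\log_{10}(d+1))$, apart from a set of $n$ of density zero for which the subtraction of $1$ in $2^{p_n}-1$ alters the leading digit; such $n$ force $\{p_n\alpha\}$ to within $O(10^{-p_n})$ of $0$ and are negligible in the limit. Local Benford distribution of order $k$ then becomes the assertion that the vectors
\[
\bigl(\{p_n\alpha\},\{p_{n+1}\alpha\},\dots,\{p_{n+k-1}\alpha\}\bigr)\in\mathbb{T}^k
\]
are equidistributed in $\mathbb{T}^k$, so that the frequency of landing in the box $I_{d_0}\times\cdots\times I_{d_{k-1}}$ equals its measure $P(d_0)\cdots P(d_{k-1})$. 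By Weyl's criterion it suffices to show, for every nonzero $(h_0,\dots,h_{k-1})\in\ZZ^k$, that
\[
\frac1N\sum_{n\le N} e^{2\pi i\,\alpha(h_0 p_n+h_1 p_{n+1}+\cdots+h_{k-1}p_{n+k-1})}\longrightarrow 0.
\]
For $k=1$ this is Vinogradov's estimate for exponential sums over primes \cite{vinogradov1947}, the same input that proves Theorem~\ref{thm:Mn-global}; the genuinely new content lies in $k\ge 2$.

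The hard part will be these shifted sums for $k\ge 2$, and the obstruction is structural. Writing $p_{n+j}=p_n+(g_n+g_{n+1}+\cdots+g_{n+j-1})$ in terms of the consecutive prime gaps $g_m=p_{m+1}-p_m$, one sees that the sum is governed by the joint distribution of $(g_n,g_{n+1},\dots)$, that is, by correlations among consecutive prime gaps. Estimating it nontrivially therefore demands quantitative control over the simultaneous distribution of several consecutive primes---information at least as deep as the Hardy--Littlewood prime $k$-tuple conjectures, and in fact more, since one must average the additive character over every admissible gap pattern. No unconditional technique presently reaches this, which is precisely why the assertion stands as a conjecture rather than a theorem. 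What can be done rigorously, and what I would present as the main evidence, is to replace the true gaps by a Cram\'er-type probabilistic model in which the gaps behave like independent random variables: there $\{p_n\alpha\}$ becomes a rapidly mixing random walk on $\mathbb{T}$, the shifted sums decorrelate, and Conjecture~\ref{conj:local-benford}---hence Conjecture~\ref{conj:benford-waits}---follows. The essential, and at present insurmountable, obstacle is to transfer this conclusion from the model back to the actual primes.
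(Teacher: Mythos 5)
The statement you were asked to prove is one of the paper's \emph{conjectures}, not a theorem: the paper contains no proof of it anywhere, only numerical evidence (the waiting-time data of Section \ref{sec:Mn-local}), a Cram\'er-model heuristic (Section \ref{sec:conclusion}), and the closing remark that a \emph{conditional} proof might be obtained by Gallagher's method under an appropriate prime $k$-tuples hypothesis. Your proposal, to its credit, does not pretend otherwise. What you actually establish is the reduction of Conjecture \ref{conj:benford-waits} to Conjecture \ref{conj:local-benford}, and that part is correct and slightly more than the paper gives: the paper merely asserts, in the chain of implications after Definition \ref{def:benford-waits}, that local Benford distribution of every order implies Benford-distributed waiting times, whereas you supply the argument (the event $w_i(d)=k$ is a length-$(k+1)$ digit pattern; summing the order-$(k+1)$ asymptotics over the $8^{k-1}$ admissible middle tuples gives a count $\sim N\,P(d)^2(1-P(d))^{k-1}$ of pattern starts among $n\le N$, and normalizing by the $\sim N P(d)$ occurrences of $d$ yields \eqref{eq:w-distribution}). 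Your translation of Conjecture \ref{conj:local-benford} into equidistribution of $(\{p_n\alpha\},\dots,\{p_{n+k-1}\alpha\})$ in $\mathbb{T}^k$ with $\alpha=\log_{10}2$, the Weyl-criterion reformulation, and your identification of correlations among consecutive prime gaps as the genuine obstruction all match the paper's own framing of why this is open.

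Two points of comparison with the paper. First, your handling of the $-1$ in $M_n=2^{p_n}-1$ is weaker than necessary: you discard a density-zero exceptional set, but the elementary argument in Section \ref{sec:data} shows the leading digits of $2^{p_n}-1$ and $2^{p_n}$ agree except at exactly two terms ($p_1=2$, $p_2=3$), since $2^m=(d+1)10^k$ forces $k=0$ and $2^m\le 10$; no density argument is needed. Second, where you propose attacking the shifted sums $\frac1N\sum_{n\le N}e^{2\pi i\,\alpha(h_0p_n+\cdots+h_{k-1}p_{n+k-1})}$ directly, the paper's suggested conditional route is different in flavor: Gallagher \cite{gallagher1976} derives Poisson statistics for primes in short intervals from the Hardy--Littlewood $k$-tuples conjecture via a moment computation, and one would then combine that gap-process information with equidistribution of $\{p_n\alpha\}$, rather than averaging additive characters over admissible gap patterns as you sketch. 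The bottom line: nothing in your proposal is wrong, your reduction is a correct proof of the one implication the paper leaves to the reader, and the core assertion remains exactly as open for you as for the authors --- a fact you state candidly rather than papering over, so there is no hidden gap being passed off as a proof.
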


These conjectures are motivated by the numerical data we will present in
Section \ref{sec:Mn-local} of this paper, and by heuristic arguments,
described in Section \ref{sec:conclusion} and  based on classical 
conjectures about the local distribution of primes.  

\bigskip

In stark contrast to the behavior of $\{M_n\}$ predicted by these
conjectures, the following result shows that smooth sequences with
similar growth rates do not satisfy the conjectures.%

\begin{thm}[Failure of Local Benford Law for Mersenne-like Smooth Sequences]
\label{thm:smooth-local-benford}
Let $\{a_n\}$ be a sequence of positive real numbers such that the
logarithmic differences
\[
\Delta \log a_n = \log a_{n+1}-\log a_n
\]
satisfy 
\begin{equation}
\label{eq:smooth-local-benford-I}
\Delta\log a_n\to\infty \quad (n\to\infty)
\end{equation}
and 
\begin{equation}
\label{eq:smooth-local-benford-II}
\Delta\log a_{n+1} = \Delta\log a_n +O\left(\frac1n\right)  \quad (n\to\infty).
\end{equation}
Then:
\begin{itemize}
\item[(i)] $\{a_n\}$ is not locally Benford distributed of order $k$ when 
$k\ge2$.
\item[(ii)] $\{a_n\}$ does not have Benford distributed waiting times.
\end{itemize}
\end{thm}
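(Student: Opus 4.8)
The plan is to work with the fractional parts $y_n=\{\log_{10}a_n\}$ and the increments $\delta_n=\log_{10}a_{n+1}-\log_{10}a_n$; since both hypotheses are invariant under rescaling the logarithm, I may take logarithms base $10$, so that $D(a_n)=d$ exactly when $y_n\in I_d:=[\log_{10}d,\log_{10}(d+1))$, an interval of length $P(d)$. The governing recursion is $y_{n+1}=\{y_n+\delta_n\}$, and the two hypotheses play opposite roles: by \eqref{eq:smooth-local-benford-I} the ``position'' $\log_{10}a_n$ accelerates ($\delta_n\to\infty$), while by \eqref{eq:smooth-local-benford-II} the ``velocity'' $\delta_n$ drifts so slowly that $\delta_{n+1}-\delta_n=O(1/n)$, which forces $\delta_n=O(\log n)$ even though $\delta_n\to\infty$. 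By Remark (2) it suffices for part (i) to refute local Benford of order $2$, since local Benford of order $k$ implies order $2$ for every $k\ge2$. The key organizing observation is that I can extract \emph{both} conclusions from a single quantity, the correlation $C(N)=\frac1N\#\{n\le N:y_n\in I_d,\ y_{n+1}\in I_d\}$: failure of $C(N)\to P(d)^2$ simultaneously refutes order-$2$ local Benford for the digit pair $(d,d)$ and, after dividing by the occurrence frequency $P(d)$, shows that the frequency of the waiting time $w_i(d)=1$ is not $P(d)$, contradicting \eqref{eq:w-distribution} with $k=1$.

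Next I would evaluate $C(N)$ by Fourier analysis on the circle. Writing $\mathbf 1_{I_d}(y)=\sum_k \hat c_k\,e^{2\pi i k y}$ with $\hat c_0=P(d)$ and $\hat c_{-k}=\overline{\hat c_k}$, and using $y_{n+1}\equiv y_n+\delta_n\pmod 1$, one obtains
\[
C(N)=\sum_{j,k}\hat c_j\hat c_k\,\frac1N\sum_{n\le N} e^{2\pi i((j+k)y_n+k\delta_n)}.
\]
The terms with $j+k\ne0$ carry a nonzero multiple of $y_n$ in the exponent; because $\delta_n\to\infty$, the sequence $\{\log_{10}a_n\}$ and its nonzero integer multiples are uniformly distributed modulo $1$ (a standard van der Corput consequence of \eqref{eq:smooth-local-benford-I}--\eqref{eq:smooth-local-benford-II}; see \cite{kuipers1974}), and the slowly varying additive term $k\delta_n$ does not destroy this cancellation, so these terms tend to $0$. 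The surviving terms are those with $j+k=0$, yielding
\[
C(N)=P(d)^2+\sum_{k\ne0}|\hat c_k|^2\,W_k(N),\qquad W_k(N)=\frac1N\sum_{n\le N} e^{2\pi i k\delta_n}.
\]
Thus $C(N)\to P(d)^2$ precisely when the weighted Weyl sums of $\{\delta_n\}$ vanish; equivalently, local Benford of order $2$ is governed entirely by whether $\{\delta_n\}$ is uniformly distributed modulo $1$.

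The crux, and the main obstacle, is therefore to show that \eqref{eq:smooth-local-benford-II} forces $\{\delta_n\}$ to fail equidistribution, so that $\sum_{k\ne0}|\hat c_k|^2 W_k(N)\not\to0$. The heuristic is transparent: when $\delta_n$ behaves like $\alpha\log n$ one has $e^{2\pi i\delta_n}\approx n^{2\pi i\alpha}$, and $\frac1N\sum_{n\le N} n^{2\pi i\alpha}\sim N^{2\pi i\alpha}/(1+2\pi i\alpha)$ has modulus bounded away from $0$, so $W_1(N)\not\to0$. To make this rigorous under the one-sided hypothesis \eqref{eq:smooth-local-benford-II} alone, I would partition the indices into the level blocks $B_m=\{n:m\le\delta_n<m+1\}$: the bound $\delta_{n+1}-\delta_n=O(1/n)$ shows each block has length at least a fixed fraction of its starting index, so the block endpoints grow at least geometrically, and within a block the increments decrease in order of magnitude, biasing the empirical law of $\{\delta_n\}$ toward the larger fractional parts and away from the uniform law. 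One then shows $\frac1N\sum_{n\le N}r(\{\delta_n\})\not\to P(d)^2$, where $r(\theta)=|I_d\cap(I_d-\theta \bmod 1)|=\sum_k|\hat c_k|^2 e^{2\pi i k\theta}$ is the autocorrelation of $\mathbf 1_{I_d}$; here one must also verify, varying $d$ over the digits if necessary, that the nonuniformity of $\{\delta_n\}$ is genuinely detected by some such autocorrelation and not accidentally annihilated in the weighted sum.

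I expect this last step to be the delicate one, precisely because \eqref{eq:smooth-local-benford-II} is only an upper bound on $|\delta_{n+1}-\delta_n|$ and permits $O(1/n)$ fluctuations: bounding the discrepancy of $\{\delta_n\}$ from below---rather than merely exhibiting nonuniformity for the clean model $\delta_n=\alpha\log n$---requires a quantitative Erd\H{o}s--Tur\'an/van der Corput estimate adapted to the geometrically growing block structure, together with care that the higher differences of $\delta_n$, which are not controlled by the hypotheses, do not conspire to restore equidistribution. Granting this, $C(N)\not\to P(d)^2$, which yields (i) through the pair $(d,d)$ (and hence all orders $k\ge2$) and (ii) through the waiting time $w=1$. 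As an independent check on the waiting-time statement, one can also note that on each block the visits of $y_n$ to $I_d$ are return times of a near-constant circle rotation to an interval of length $P(d)$, so by the three-distance theorem they take at most three distinct gap values at each scale, which is manifestly incompatible with the geometric law \eqref{eq:w-distribution} that charges every $k\ge1$.
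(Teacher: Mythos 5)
Your proposal reduces everything, via Fourier expansion of the pair correlation $C(N)$, to showing that the increments $\delta_n=u_n$ fail to be equidistributed modulo $1$ in a way detected by the autocorrelation coefficients $|\hat c_k|^2$ --- and then you explicitly grant that step (``Granting this\dots''). That granted step is not a technicality; in your framing it carries essentially the entire content of the theorem, and it is genuinely hard to extract from \eqref{eq:smooth-local-benford-II} alone, since that hypothesis is only an upper bound on $|\delta_{n+1}-\delta_n|$ and gives no structural model (monotonicity, smoothness, a $\log n$ asymptotic) for $\delta_n$. Your heuristic for $\delta_n\approx\alpha\log n$ is fine, but a quantitative discrepancy lower bound valid for \emph{every} sequence satisfying \eqref{eq:smooth-local-benford-I}--\eqref{eq:smooth-local-benford-II}, surviving possible cancellation in $\sum_{k\ne0}|\hat c_k|^2W_k(N)$, is nowhere supplied. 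A second, independent defect: your claim that \eqref{eq:smooth-local-benford-I}--\eqref{eq:smooth-local-benford-II} force $\{m\log_{10}a_n\}$ to be uniformly distributed mod $1$ for all $m\ne0$ ``by van der Corput'' is false in general. Van der Corput's difference theorem would require the difference sequence $\{u_n\}$ itself to be u.d.\ mod $1$ --- exactly the point at issue --- and the hypotheses permit, e.g., $u_n$ to sit essentially constant at a half-integer value $m+\tfrac12$ over blocks $[N,cN]$ (increments $O(1/n)$ allow this), on which $y_n=\{\log_{10}a_n\}$ oscillates between just two values and is badly non-uniform. So the cross terms $j+k\ne0$ in your expansion need not vanish, and your identity $C(N)=P(d)^2+\sum_{k\ne0}|\hat c_k|^2W_k(N)+o(1)$ breaks down for admissible sequences. (Your closing three-distance ``check'' is also only heuristic: the three-gap theorem concerns a fixed rotation, not a rotation with drifting angle, and gap values may vary across scales.)

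For contrast, the paper's proof avoids all distributional analysis of $\{u_n\}$ mod $1$. It takes $n_k$ to be the first index with $u_n\ge k+\tfrac12$ and observes that \eqref{eq:smooth-local-benford-II} \emph{traps} $u_n$ within $\epsilon$ of the half-integer $k+\tfrac12$ throughout an interval $n_k\le n<(1+\delta)n_k$ of positive relative length. On such an interval, $u_n$ is at distance at least $\tfrac12-\epsilon$ from every integer, whereas $D(a_n)=D(a_{n+1})=1$ would force $|u_n-m|\le\log_{10}2=0.301\dots$ for some $m\in\ZZ$; taking $\epsilon=0.1$ this is impossible. Since local Benford distribution of order $2$, and likewise Benford-distributed waiting times, would force the pattern $D(a_n)=D(a_{n+1})=1$ to occur in every such interval for large $k$, both (i) and (ii) follow at once. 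The lesson is that only the \emph{local} behavior of $u_n$ near its half-integer crossings is needed --- a fact that holds pointwise for every sequence obeying \eqref{eq:smooth-local-benford-I}--\eqref{eq:smooth-local-benford-II} --- whereas your route demands global equidistribution information that the hypotheses do not provide. If you want to salvage your Fourier framework, you would at minimum need to (a) case-split on whether $\{my_n\}$ is u.d., handling the non-u.d.\ case by a separate argument (note failure of u.d.\ does not by itself contradict first-digit Benford, which is strictly weaker), and (b) prove the uniform non-equidistribution lower bound for $\{\delta_n\}$; the paper's half-integer trapping argument is in effect the elementary substitute for both.
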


The theorem applies to a large class of \emph{smooth} sequences
with growth rates similar to that of the Mersenne numbers. 
In particular, it is easy to check that conditions 
\eqref{eq:smooth-local-benford-I} and \eqref{eq:smooth-local-benford-II} 
hold for the sequences $\{n!\}$, $\{n^n\}$, and  $\{2^{n\log n}\}$.  Thus
we have the following corollary.

\begin{cor}
\label{cor:smooth-local-benford}
The sequences $\{n!\}$, $\{n^n\}$, and $\{2^{n\log n}\}$ are 
not locally Benford distributed of order $2$ (or larger)
and do not have Benford distributed waiting times.
\end{cor}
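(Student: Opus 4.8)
The plan is to reduce everything to the distribution of the fractional parts of the logarithmic increments, and to exploit condition \eqref{eq:smooth-local-benford-II} as precisely the feature that \emph{prevents} those fractional parts from equidistributing. Write $b_n=\log_{10}a_n$ and $\delta_n=\Delta\log_{10}a_n$; since the hypotheses are invariant under rescaling the logarithm, \eqref{eq:smooth-local-benford-I}--\eqref{eq:smooth-local-benford-II} hold verbatim for $\delta_n$. One has $D(a_n)=d\iff\{b_n\}\in I_d:=[\log_{10}d,\log_{10}(d+1))$, together with the recursion $\{b_{n+1}\}\equiv\{b_n\}+\delta_n\pmod 1$, so all order-$2$ and waiting-time statistics are governed by the joint behaviour of $\{b_n\}$ and the \emph{offset} $\theta_n:=\{\delta_n\}$. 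For part (i) it suffices to refute order $2$, since a sequence that is locally Benford distributed of some order $k\ge2$ is in particular so of order $2$ (see the remark following Definition~\ref{def:local-benford}). For part (ii) I would tie the waiting-time law to the same object: the proportion of waiting times equal to $1$ for digit $d$ is the frequency of the ``repeat'' event $\{D(a_n)=D(a_{n+1})=d\}$ divided by the order-$1$ frequency $P(d)$; the repeat event is exactly $(\{b_n\},\{b_{n+1}\})\in B_{d,d}:=I_d\times I_d$. Hence both parts reduce to showing that some box frequency fails to converge to the Benford product.

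Next I would record the quantitative slow variation from \eqref{eq:smooth-local-benford-II}: for $n\in(N,\lambda N]$ one has $|\delta_n-\delta_N|\le C\sum_{N\le m<n}1/m\le C\log\lambda+o(1)$, so on windows $(N,(1+\epsilon)N]$ the offset is constant up to $O(\epsilon)$. Over such a window the points $(\{b_n\},\{b_{n+1}\})$ lie within $O(\epsilon)$ of the single line $\{(x,x+\theta):x\in[0,1)\}$ on the torus, and the indices with $\{b_n\}\in I_d$ are, up to $O(\epsilon)$, the return times of a rotation by $\theta_N$ to $I_d$. Summing window contributions, the frequency up to $N$ of a box $B_{d_0,d_1}$ should be well approximated by $\int_0^1\ell_\theta(B_{d_0,d_1})\,d\rho_N(\theta)$, where $\rho_N$ is the empirical distribution of $\theta_1,\dots,\theta_N$ and $\ell_\theta(B)$ is the (explicit, piecewise-linear) length of the $x$-interval for which the offset-$\theta$ diagonal meets $B$. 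For the diagonal box this is the tent function $\ell_\theta(B_{d,d})=\bigl(P(d)-\operatorname{dist}(\theta,\ZZ)\bigr)_+$, whose average against the \emph{uniform} measure is exactly $P(d)^2$, the Benford prediction.

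The crux is then to show that $\rho_N$ does \emph{not} converge to the uniform measure, so that the tent average oscillates and the box frequency cannot settle on $P(d)^2$. This is where \eqref{eq:smooth-local-benford-II} enters decisively: it forces $n\,(\delta_{n+1}-\delta_n)=O(1)$ and hence $\delta_n=O(\log n)$, placing $\{\delta_n\}$ in the slowly varying regime in which Weyl sums do not decay. I would estimate $\widehat{\rho_N}(h)=\tfrac1N\sum_{n\le N}e^{2\pi i h\delta_n}$ by Abel summation against the increments $\delta_{n+1}-\delta_n=O(1/n)$: the bound $n(\delta_{n+1}-\delta_n)=O(1)$ prevents cancellation over dyadic blocks and yields $\liminf_N|\widehat{\rho_N}(h)|>0$ with an argument that rotates with $N$, so that the limit does not exist. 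Since the tent $\ell_\theta(B_{d,d})$ has a nonzero Fourier coefficient at some $h$, the integral $\int\ell_\theta(B_{d,d})\,d\rho_N(\theta)$ inherits this non-convergence, contradicting convergence to $P(d)^2$; this refutes order-$2$ local Benford distribution in (i) and, via the repeat/waiting-time-$1$ identification, the geometric law in (ii). It is worth noting the sharp contrast with \eqref{eq:smooth-local-benford-I} alone: were $n(\delta_{n+1}-\delta_n)\to\infty$ instead, the same Weyl sums would decay, $\rho_N$ would tend to uniform, and consecutive digits would behave independently -- the regime expected for $\{M_n\}$.

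The main obstacle is making the two approximations rigorous under the weak hypothesis \eqref{eq:smooth-local-benford-II}, which supplies no monotonicity or smoothness of $\delta_n$. First, the Weyl-sum estimate must be carried out by summation by parts using only the one-sided bound $\delta_{n+1}-\delta_n=O(1/n)$, controlling the accumulated phase over blocks without a clean asymptotic for $\delta_n$. Second, the local-to-global transfer -- replacing the box frequency by the tent average against $\rho_N$ -- requires equidistribution of $\{b_n\}$ on \emph{short} windows $(N,(1+\epsilon)N]$, which is stronger than the global equidistribution furnished (for contradiction) by order-$1$ Benford distribution; establishing this short-interval equidistribution and interchanging the window and limit is the delicate point. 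Both steps simplify considerably for the explicit sequences of Corollary~\ref{cor:smooth-local-benford}, where $\delta_n$ is a smooth monotonic function of $n$ with $\delta_n\sim c\log n$, so that $\rho_N$ and the Weyl sums can be evaluated directly.
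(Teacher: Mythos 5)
Your overall architecture has a genuine gap at precisely the step you flag as ``the delicate point,'' and that step cannot be waved through. The approximation of the pair frequency by $\int_0^1 \ell_\theta(B)\,d\rho_N(\theta)$ presupposes that within each window $(N,(1+\epsilon)N]$ the fractional parts $\{b_n\}$ equidistribute in $[0,1)$, \emph{uniformly in the window offset} $\theta_N$. Nothing supplies this: the hypothesis assumed for contradiction (order-$2$ local Benford distribution) gives only global convergence of pair frequencies, not equidistribution on windows of length $\epsilon N$; and even for the three explicit sequences, the within-window dynamics is an almost-rotation by $\theta_N$, which is arbitrarily close to $0$ (or to a rational with small denominator) whenever $\{c\log N\}$ is. On such windows the orbit barely moves, the window's hit frequency is governed by the initial point $\{b_{n_0}\}$ rather than by $\ell_{\theta_N}(B)$, and your transfer fails pointwise; rescuing it would require an averaged argument over windows that you do not give. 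A second, lesser, problem: the claim that \eqref{eq:smooth-local-benford-II} alone forces $\liminf_N|\widehat{\rho_N}(h)|>0$ is asserted, not proved --- \eqref{eq:smooth-local-benford-II} is a one-sided $O(1/n)$ bound with no monotonicity or sign information, and your summation-by-parts sketch does not rule out cancellation in general. For the corollary's sequences this piece is repairable, since $\delta_n=c\log n+c'+O(1/n)$ gives $\widehat{\rho_N}(h)\sim N^{2\pi ihc'}e^{2\pi ihc\log N}/(1+2\pi ihc)$ directly (hence a rotating, non-vanishing limit set), but the transfer gap above persists even then.

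The paper's proof shows how much of this machinery is unnecessary. It deduces the corollary from Theorem \ref{thm:smooth-local-benford}, whose proof uses no distributional information at all: with $u_n=\Delta\log_{10}a_n$, choose $n_k$ minimal with $u_{n_k}\ge k+\frac12$; conditions \eqref{eq:smooth-local-benford-I} and \eqref{eq:smooth-local-benford-II} force $|u_n-k-\frac12|\le 0.1$ throughout a window $n_k\le n<(1+\delta)n_k$ of proportional length; on that window the event $D(a_n)=D(a_{n+1})=1$ is outright impossible, since it would force $u_n$ within $\log_{10}2<0.4$ of an integer; yet either order-$2$ local Benford distribution or Benford waiting times makes such adjacent $(1,1)$ pairs occur with positive density, hence inside every such window for large $k$. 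In your own notation this is the observation that the tent function $\ell_\theta(B_{1,1})=\bigl(P(1)-\operatorname{dist}(\theta,\ZZ)\bigr)_+$ \emph{vanishes identically} when $\operatorname{dist}(\theta,\ZZ)\ge\log_{10}2$ --- and zero hits on a half-integer-offset window is a pointwise statement requiring no equidistribution of the $x$-coordinate whatsoever. If you want to salvage your route, discard the Weyl sums and the measure $\rho_N$, keep only your first observation (the offset is nearly constant on proportional windows), and run the argument at offsets near $\frac12$; that is exactly the paper's proof.
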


For more general results of this type see \cite{local-benford}.

Figure \ref{fig:benford-waits-overview} illustrates the difference in the
waiting time behavior between the sequence of Mersenne numbers,
$\{M_n\}$,  and the sequence of ``smooth Mersenne numbers'', $\{2^{n\log
n}\}$.  For the sequence of Mersenne numbers the waiting time distribution 
resembles a geometric distribution very closely, while for 
its smooth analog,  the waiting times seem to have an irregular
distribution that is quite far from a geometric distribution.


\section{Description of Data and Implementation Notes}
\label{sec:data}

\subsection{Description of data}
Our analysis is based on the leading digits of the first billion
terms of the following sequences:
\begin{itemize}
\item\textbf{Mersenne numbers.}
The Mersenne numbers, defined as $M_n=2^{p_n}-1$, where $p_n$ is the
$n$-th prime, form our main object of investigation.

\item\textbf{Random Mersenne numbers.} 
Random Mersenne numbers form one of our ``control'' sequences against
which we compare the leading digit behavior of the Mersenne numbers. 
They are defined as $M_n^*=2^{p_n^*}-1$, where
$\{p_n^*\}$ is a sequence of ``random'' primes obtained by declaring 
an integer $n\ge 3$ to be a prime with probability $1/\log n$.  

\item \textbf{Smooth Mersenne numbers.}
The sequence of ``smooth'' Mersenne numbers, defined as $2^{n\log n}$,
constitutes our second main ``control'' sequence for the Mersenne numbers.
The smooth Mersenne numbers are essentially the numbers obtained 
by replacing the $n$-th prime, $p_n$, in the definition of $M_n$ by
its smooth asymptotic, $n\log n$.

\item \textbf{Other ``smooth'' sequences.}
Additional smooth sequences we have used as points of
comparisons in some of our analyses 
are the sequences $\{2^n\}$ and $\{2^{n^2}\}$. 
\end{itemize}

\subsection{Generating the leading digits}
Because of the size of the numbers involved
(for example, the billionth Mersenne
number has more than six billion decimal digits), 
computing the necessary 
sequences of leading digits is a nontrivial task. 
For our primary sequence, the Mersenne numbers, 
we proceeded as follows:

\begin{enumerate}
\item Generate the prime numbers $p_n$, $n=1,2,\dots,10^9$,
using an optimized version of the sieve of Eratosthenes, obtained from
\url{http://primesieve.org}.

\item For each such $p_n$, compute 
$\{p_n\log_{10}2\}$, 
where $\{t\}$ denotes the fractional part of $t$,
and determine the
unique integer $d\in\{1,2,\dots,9\}$ such that 
\[
\log_{10}d\le \{p_n\log_{10}2\}<\log_{10}(d+1). 
\]
This value of $d$ is the leading digit of $2^{p_n}$ in base $10$.
To get accurate values of leading digits, we used the \url{REAL} 
data type from the C++ library iRRAM,  \url{http://irram.uni-trier.de},
a library for error-free real arithmetic (see M\"uller
\cite{muller2001irram}).

\item 
To obtain the leading digits for the Mersenne numbers $2^{p_n}-1$, 
observe that $2^{p_n}-1$ and $2^{p_n}$  have the same leading digit
unless $p_n=2$ or $p_n=3$.
Indeed, if $2^m-1$ has leading digit $d$, then 
$d\cdot 10^k\le 2^m-1<(d+1)\cdot 10^k$ for some nonnegative integer $k$,
which implies $d\cdot 10^k\le 2^m<(d+1)\cdot 10^k$  unless
$2^m=(d+1)10^k$. 
But the latter equation can only hold if $k=0$, i.e., if $2^m = d+1\le 10$.
Thus, except for the two terms corresponding to $p_1=2$ and $p_2=3$, the leading
digit of $2^{p_n}$ obtained in the previous step is also the leading digit
of  the Mersenne number $2^{p_n}-1$. Adjusting for the two exceptional terms
gives the leading digit sequence for the Mersenne numbers.
\end{enumerate}

Leading digits of the various smooth analogs of the Mersenne numbers were
generated in the same way, with the sequence of prime numbers in the
first step replaced by the appropriate smooth sequence.

\subsection{Generating random Mersenne numbers}
Random Mersenne numbers were generated in the same way as Mersenne numbers, 
with the sequence of primes, $\{p_n\}$, replaced by a sequence of random
primes,
$\{p_n^*\}$, obtained as follows:

\begin{enumerate}
\item
For each $n\ge3$ generate a random real number $R_n$ in the interval $[0,1]$.
As random number generator we used the \url{uniform_real_distribution} 
function from the C++ standard library.

\item 
If $R_n\le 1/\log n$, declare $n$ to be a random prime. 

\item
Let $p_1^*<p_2^*<\cdots<p_{10^9}^* $ be the ordered sequence of the first 
$10^9$ random primes obtained.
\end{enumerate}
The second step ensures that the random primes generated occur with
density $1/\log n$, which is the actual density 
of prime numbers near $n$. Indeed, by a classic form
of the prime number theorem (see, for example, Theorem 8.15 in
\cite{bateman-diamond2004}) we have 
\begin{equation*}
\label{eq:pnt}
\#\{n\le N: n\text{ is prime }\}=\int_2^N \frac{1}{\log x}dx +O\left(N
\exp\left(-c\sqrt{\log N}\right)\right),
\end{equation*}
where $c$ is a positive constant.   

\subsection{Implementation notes}
Most of the computations were carried out at the \url{Taub} node of the 
\emph{Illinois Campus Computing Cluster},
\url{https://campuscluster.illinois.edu/hardware/\#taub}, a multi-core high
performance computing platform running Scientific Linux 6.1, with 96 GB of
RAM.  The primary data sets of leading digits were generated and analyzed with
C++ programs, compiled with GNU \url{g++} \url{-std=c++11}. 
Generating one billion leading digits took around 5--10 hours of CPU
time.
Python 3.4 was used for additional lighter weight analysis and string
manipulation.


\section{Global Distribution Properties}

\label{sec:Mn-global}

\subsection{Empirical data on global distribution. Evidence for Conjecture
\ref{conj:benford-error}}

We begin by presenting numerical data on the frequencies of leading digits 
in the sequence of Mersenne numbers $M_n=2^{p_n}-1$.  Table
\ref{table:benford-mersenne} shows the actual counts for leading digits 
among the first billion Mersenne numbers, along with the
predicted counts based on the Benford frequencies \eqref{eq:benford}.  In
all cases, the actual and predicted counts agree to at least three digits.


\begin{table}[H]
\begin{center}
\begin{tabular}{|c|r|r|r|}
\hline
Digit& Count & Benford Prediction & Error
\\
\hline
\hline
1 & 301032256 & 301029995.66 & 2260.34  
\\
\hline
2 & 176095018 & 176091259.06 & 3758.94  
\\
\hline
3 & 124946964 & 124938736.61 & 8227.39  
\\
\hline
4 & 96901940 & 96910013.01 & -8073.01 
\\
\hline
5 & 79176717 & 79181246.05 & -4529.05  
\\
\hline
6 & 66950369 & 66946789.63 & 3579.37  
\\
\hline
7 & 57993513 & 57991946.98 & 1566.02  
\\
\hline
8 & 51145193 & 51152522.45 & -7329.45  
\\
\hline
9 & 45758030 & 45757490.56 & 539.44  
\\
\hline
\end{tabular}
\end{center}
\caption{%
Actual versus predicted counts of leading digits among the first $10^9$
Mersenne numbers. The predicted counts are given by 
$NP(d)$, where $N=10^9$ is the number of terms, $d$ is the
digit, and $P(d)=\log_{10}(1+1/d)$ is the Benford frequency for digit $d$,
given by \eqref{eq:benford}.
}
\label{table:benford-mersenne}
\end{table}


The errors in Table \ref{table:benford-mersenne} appear to be  
roughly of the squareroot size predicted by Conjecture
\ref{conj:benford-error}.  For a more detailed analysis,
we compare these errors to those in a random model in which
the events ``$M_n$ has leading digit $d$'', $n=1,2,\dots$,
are assumed to be independent events with probability $P(d)=\log_{10}(1+1/d)$.
Under this assumption, the Central Limit Theorem yields that
the number of terms with leading digit $d$ among
the first $N$ terms $M_n$ is approximately normally distributed with mean
$NP(d)$ and standard deviation $\sqrt{N\cdot P(d)(1-P(d))}$.  This motivates
normalizing the error to a z-score, defined as 
\begin{equation}
\label{eq:z-score}
z(d,N)=\frac{\#\{n\le N: D(M_n)=d\} - N P(d)}{\sqrt{N\cdot P(d)(1-P(d))}}.
\end{equation}
Figure \ref{fig:benford-global-z-score} shows the behavior of 
these z-scores as functions
of $N$ for the digits $1$, $2$, $5$, and $9$. In all cases, the z-scores
exhibit the typical random walk type behavior associated with sequences of
independent Bernoulli random variables.

\begin{figure}[H]
\begin{center}
\includegraphics[width=.45\textwidth]{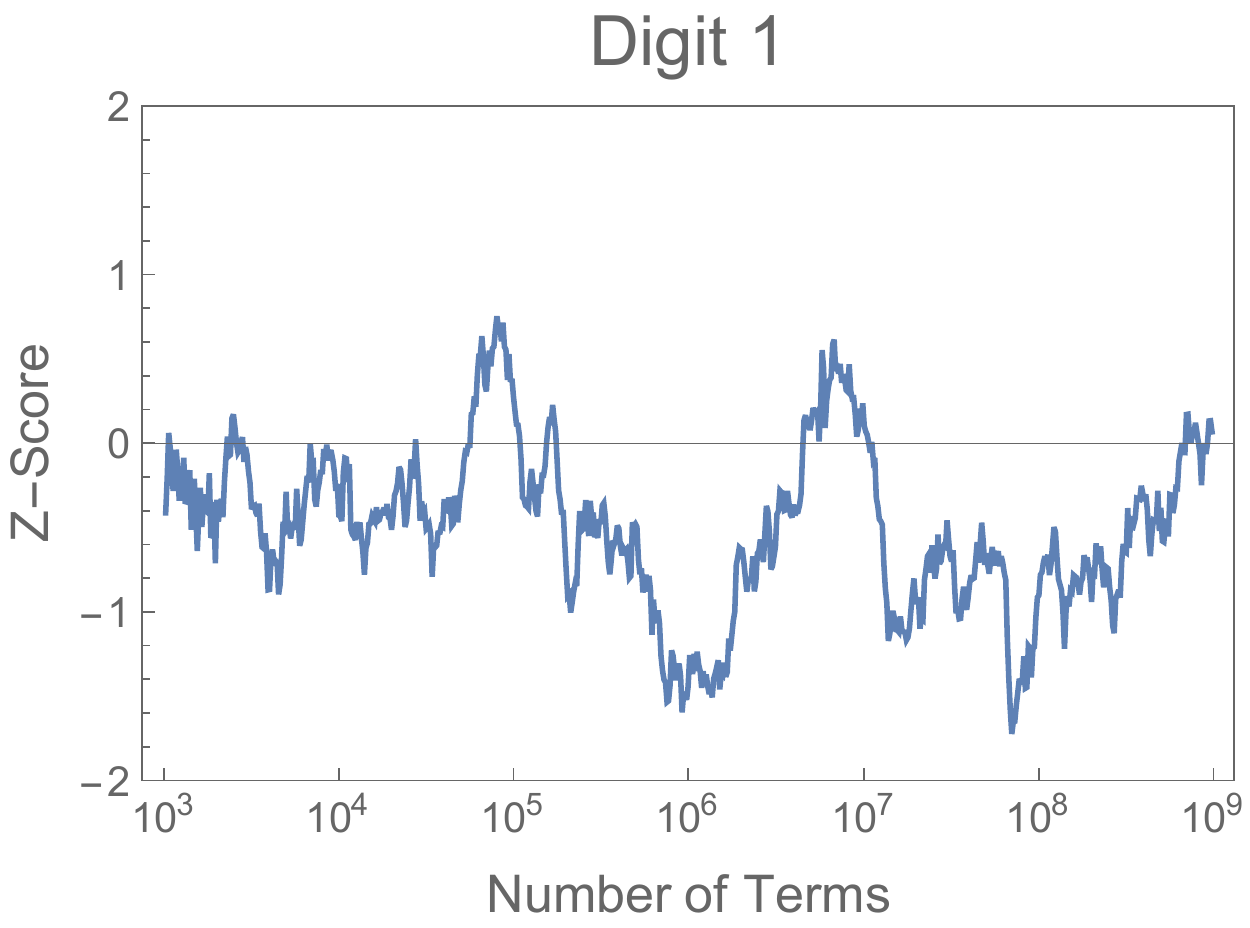}
\hspace{1em}
\includegraphics[width=.45\textwidth]{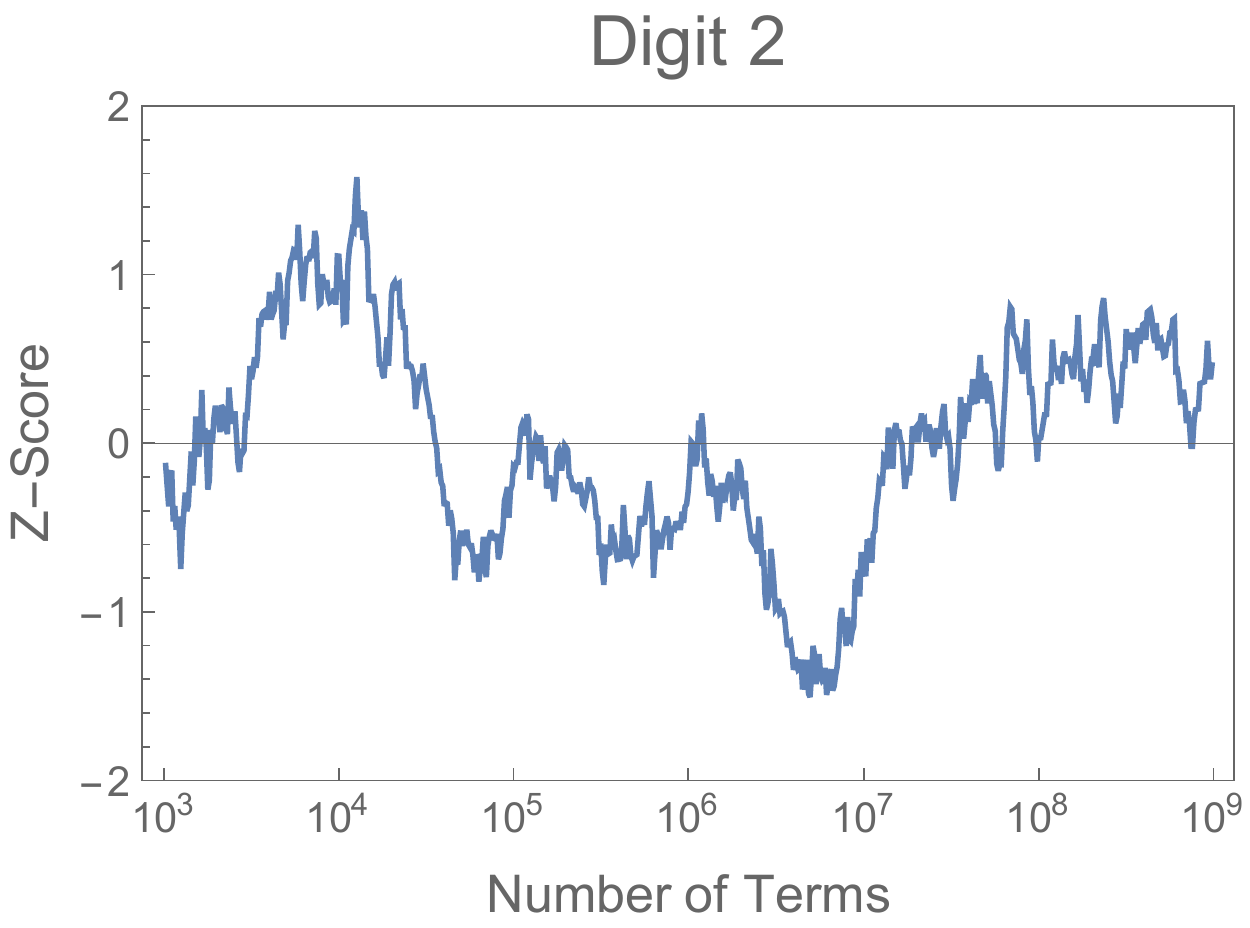}
\\[1ex]
\includegraphics[width=.45\textwidth]{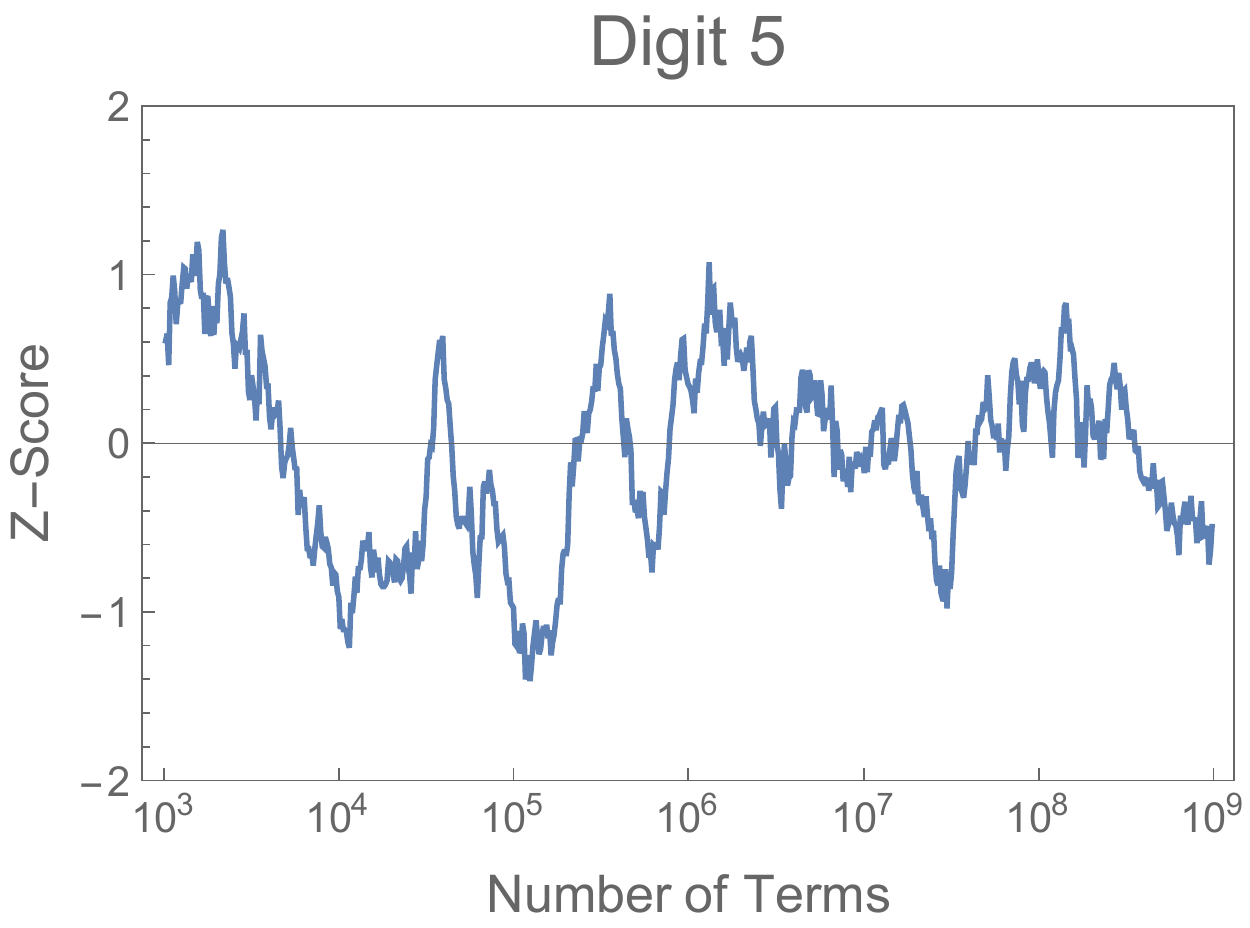}
\hspace{1em}
\includegraphics[width=.45\textwidth]{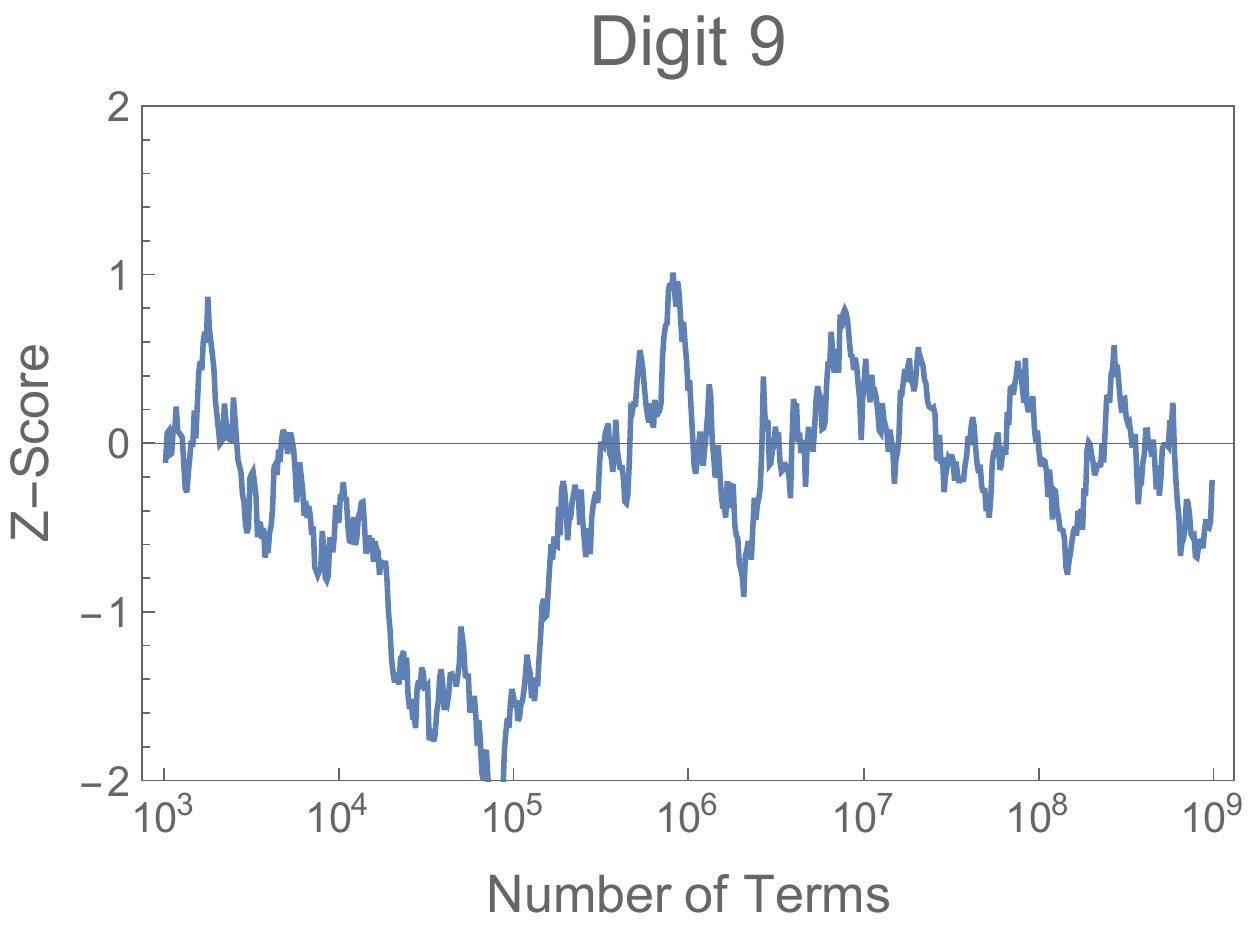}
\end{center}
\caption{%
The behavior of the normalized errors, given by the z-scores 
\eqref{eq:z-score}, in the leading digit counts for Mersenne numbers
for digits $1$, $2$, $5$, and $9$.
In all cases, the z-scores exhibit the typical random-like behavior one would
observe if the digits were to occur independently with Benford frequencies
\eqref{eq:benford}.
}
\label{fig:benford-global-z-score}
\end{figure}

Additional insight is provided by the two charts 
of Figure \ref{fig:benford-global-histogram}, which show the distribution 
of z-scores $z(d,N)$, as $N$ runs through the  
geometrically spaced sample points $N_i=
\lfloor 10^3\cdot 1.05^i\rfloor$, $i=1,2,\dots,696$.
The box-whisker chart on the left shows, for each
digit $d$, the median, quartiles, and extreme values of the corresponding
set of z-scores.
The histogram on the right 
displays the combined distribution of these z-scores for all
digits $d$.   In particular, the data shows that all z-scores sampled fall 
into the interval $[-3,3]$ and that most are spread out over the
subinterval $[-1,1]$. In other words, within the data we have sampled
the error in the Benford approximation for leading digit counts 
is within a factor $\pm3$ of $\sqrt{N\cdot P(d)(1-P(d))}$, and  the
latter quantity represents the ``typical'' size of the error. 
This lends strong support to Conjecture
\ref{conj:benford-error}, which states that the error is
order $O(N^{1/2+o(1)})$, but not of smaller order of magnitude.

In fact, it may be the case that these errors, after normalizing by
$\sqrt{N\cdot P(d)(1-P(d))}$, converge, in an appropriate sense (for
example, in the sense of logarithmic density), to a standard normal
distribution.  This would represent a significant refinement of
Conjecture \ref{conj:benford-error}.

\begin{figure}[H]
\begin{center}
\includegraphics[width=.45\textwidth]{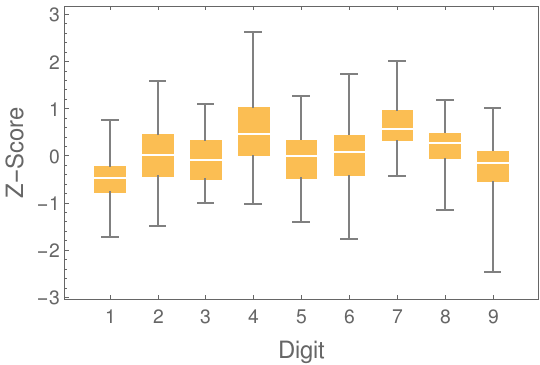}
\hspace{1em}
\includegraphics[width=.45\textwidth]{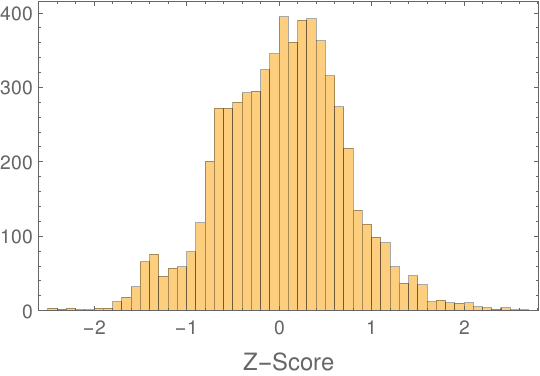}
\end{center}
\caption{%
The distribution of the normalized errors, given by the z-scores 
\eqref{eq:z-score}, in the leading digit counts for Mersenne numbers.
The box-whisker chart shows the distribution of z-scores $z(d,N)$ for each
individual digit $d$. 
The histogram shows the combined distribution of these z-scores 
over all digits $d$.
}
\label{fig:benford-global-histogram}
\end{figure}

\subsection{Proof of Theorem \ref{thm:Mn-global}}

To conclude this section, we 
show that the sequence $\{M_n\}$ satisfies Benford's Law.
The proof is short, but it relies on two key results from the literature.
The first result introduces an important tool in establishing Benford's Law for
mathematical sequences, namely the concept of uniform distribution modulo $1$.

\begin{defn}[Uniform Distribution Modulo $1$]
\label{def:ud-mod1}
A sequence of real numbers
$\{u_n\}$ is called \emph{uniformly distributed modulo $1$} if it
satisfies
\begin{equation}
\label{eq:def-ud-mod1}
\lim_{N\to\infty} \frac1N \#\{n\le N: \{u_n\}\le t
\}=t\quad (0\le t\le 1),
\end{equation}
where $\{x\}$ denotes the fractional part of $x$.
\end{defn}

The connection between uniform distribution and 
Benford's Law is given in the following proposition,  
due to Diaconis \cite[Theorem 1]{diaconis1977}.

\begin{prop}[Diaconis]
\label{prop:benford-ud-mod1}
Let $\{a_n\}$ be a sequence of positive real numbers. If the sequence
$\{\log_{10} a_n\}$ is uniformly distributed modulo $1$, then $\{a_n\}$
satisfies Benford's Law.
\end{prop}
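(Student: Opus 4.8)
The plan is to reduce Benford's Law for $\{a_n\}$ to the equidistribution hypothesis by translating the leading-digit condition $D(a_n)=d$ into a condition on the fractional part of $u_n := \log_{10} a_n$. The starting observation is that $a_n = 10^{\lfloor u_n\rfloor}\cdot 10^{\{u_n\}}$, and since the factor $10^{\lfloor u_n\rfloor}$ merely shifts the decimal point, the leading digit of $a_n$ depends only on the mantissa $10^{\{u_n\}}\in[1,10)$.

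First I would establish the elementary equivalence
\[
D(a_n)=d \iff \{u_n\}\in[\log_{10} d,\ \log_{10}(d+1)),
\]
for each $d\in\{1,\dots,9\}$, directly from definition \eqref{eq:D(x)} by taking $k=\lfloor u_n\rfloor$ and applying $\log_{10}$ to the inequalities $d\le 10^{\{u_n\}}<d+1$. This identifies the leading-digit counting function with the counting function for the fractional parts $\{u_n\}$ landing in the interval $I_d := [\log_{10} d,\log_{10}(d+1))$.

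Next I would upgrade the definition of uniform distribution modulo $1$ in \eqref{eq:def-ud-mod1}, which only concerns initial intervals $[0,t]$, to an arbitrary subinterval $[\alpha,\beta)\subseteq[0,1)$. Differencing the counting functions at $t=\beta$ and $t=\alpha$ shows that the limiting frequency with which $\{u_n\}$ falls in such an interval is $\beta-\alpha$. Applying this with $I_d$, whose length is $\log_{10}(d+1)-\log_{10} d=\log_{10}(1+1/d)=P(d)$, immediately yields \eqref{eq:benford3}.

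The one point requiring care — the main (and essentially only) obstacle — is the mismatch between the half-open interval $I_d$ produced by the leading-digit condition and the closed initial intervals $[0,t]$ appearing in \eqref{eq:def-ud-mod1}. I would resolve this by noting that any single level set $\{n:\{u_n\}=\alpha\}$ has asymptotic density zero, since its counting function is dominated by the difference of the equidistribution counts at $t=\alpha$ and $t=\alpha-\varepsilon$ for arbitrarily small $\varepsilon>0$. Consequently, exchanging open and closed endpoints does not affect any of the limits, and the computation carries through cleanly.
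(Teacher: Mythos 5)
Your proof is correct, and since the paper does not prove this proposition at all --- it is quoted as Theorem 1 of Diaconis \cite{diaconis1977} --- your argument is precisely the standard proof of that cited result: the equivalence $D(a_n)=d \iff \{\log_{10}a_n\}\in[\log_{10}d,\,\log_{10}(d+1))$, the passage from the initial segments $[0,t]$ of \eqref{eq:def-ud-mod1} to arbitrary subintervals of $[0,1)$ by differencing, and the observation that level sets $\{n:\{u_n\}=\alpha\}$ have density zero, so endpoint conventions are immaterial. The only micro-point worth noting is that your $\varepsilon$-sandwich for level sets requires $\alpha>0$ (one cannot difference at $\alpha-\varepsilon$ when $\alpha=0$), but this is harmless: for $d=1$ the interval $[0,\log_{10}2)$ needs no left-endpoint correction because $\{u_n\}\ge 0$ automatically, and in any case $\{u_n\}=0$ has density zero directly from \eqref{eq:def-ud-mod1} at $t=0$.
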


The second ingredient in the proof is the following  result of Vinogradov
\cite{vinogradov1947} (see also Iwaniec and Kowalski \cite[Theorem
21.3]{iwaniec2004}).

\begin{prop}[Vinogradov]
\label{prop:primes-ud-mod1}
Let $\alpha$ be an irrational number. Then the sequence $\{\alpha p_n\}$
is uniformly distributed modulo $1$. 
\end{prop}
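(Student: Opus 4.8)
The plan is to combine Weyl's criterion with Vinogradov's method for exponential sums over primes. Writing $e(t)=e^{2\pi i t}$, recall that Weyl's criterion characterizes uniform distribution modulo $1$ of a sequence $\{u_n\}$ by the requirement that
\[
\frac{1}{N}\sum_{n=1}^N e(h u_n)\longrightarrow 0\qquad (N\to\infty)
\]
for every nonzero integer $h$. Applying this with $u_n=\alpha p_n$ and setting $\beta=h\alpha$ (which is again irrational, since $h\neq0$), it suffices to show $\sum_{n\le N}e(\beta p_n)=o(N)$ for each fixed irrational $\beta$. Reindexing by the primes themselves gives $\sum_{n\le N}e(\beta p_n)=\sum_{p\le x}e(\beta p)$ with $x=p_N$ and $N=\pi(x)$, so the whole problem reduces to the prime exponential sum bound $\sum_{p\le x}e(\beta p)=o(\pi(x))$ as $x\to\infty$.

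Next I would pass from primes to the von Mangoldt function: up to the negligible contribution of prime powers and a routine partial summation to remove the weight $1/\log n$, the estimate $\sum_{p\le x}e(\beta p)=o(\pi(x))$ is equivalent to
\[
\sum_{n\le x}\Lambda(n)\,e(\beta n)=o(x)\qquad (x\to\infty).
\]
The standard route to this is Vaughan's identity, which decomposes the left-hand side into \emph{Type I} (linear) sums of the shape $\sum_d a_d\sum_{m\le x/d}e(\beta dm)$ and \emph{Type II} (bilinear) sums $\sum_m\sum_k b_m c_k\,e(\beta mk)$, with suitably truncated coefficients. Each inner geometric sum in the Type I part is bounded by $\min(x/d,\|\beta d\|^{-1})$, where $\|t\|$ denotes the distance from $t$ to the nearest integer, and the Type II part is handled by Cauchy--Schwarz followed by a count of the resulting differences. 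Feeding in a Dirichlet approximation $|\beta-a/q|\le 1/q^2$ with $(a,q)=1$, together with the classical estimate for $\sum_{d\le D}\min(x/d,\|\beta d\|^{-1})$, this yields Vinogradov's bound
\[
\sum_{n\le x}\Lambda(n)\,e(\beta n)\ll(\log x)^4\left(\frac{x}{\sqrt q}+x^{4/5}+\sqrt{xq}\right).
\]

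Finally I would deduce the $o(x)$ statement from this bound using the irrationality of $\beta$. Since $\beta$ is a \emph{fixed} irrational, the denominators $q_k$ of its continued-fraction convergents tend to infinity, and for each large $x$ one can select an approximation whose denominator $q$ lies in a range for which all three terms on the right are $o(x)$: the middle term is always $o(x)$, while the outer two force $q$ to be neither too small nor too large. I expect the Type II bilinear estimate to be the main technical obstacle, as it is the heart of Vinogradov's method and the place where the genuine savings over the trivial bound are extracted. A secondary subtlety is the concluding deduction, which must rule out scales $x$ at which $\beta$ is abnormally well approximated by a rational of small denominator; here one relies on the fact that, for a fixed irrational $\beta$, such anomalously good approximations cannot persist as $x\to\infty$ because the convergent denominators grow without bound.
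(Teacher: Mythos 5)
The paper itself does not prove this proposition; it quotes it as a known result of Vinogradov, with pointers to \cite{vinogradov1947} and to Iwaniec--Kowalski \cite[Theorem 21.3]{iwaniec2004}. Your outline---Weyl's criterion, reduction to $\sum_{n\le x}\Lambda(n)e(\beta n)=o(x)$, Vaughan's identity with Type I/Type II sums, and the bound $\ll(\log x)^4\bigl(xq^{-1/2}+x^{4/5}+(xq)^{1/2}\bigr)$ for $|\beta-a/q|\le 1/q^2$, $(a,q)=1$---is exactly the standard route taken in those references, and everything up to and including the exponential sum estimate is correct.

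The gap is in your concluding deduction. You assert that for each large $x$ one can select an approximation $a/q$ whose denominator lies in a window, roughly $(\log x)^{8+\epsilon}\le q\le x(\log x)^{-8-\epsilon}$, in which all three terms are $o(x)$, and you justify this by the fact that the convergent denominators of $\beta$ tend to infinity. That fact only gives $q=q(x)\to\infty$ with no control on the rate, and for Liouville-type irrationals your selection claim fails outright: if a convergent denominator $q_k$ is small while $q_{k+1}$ is enormous (say $q_{k+1}>e^{x}$), then by Legendre's theorem every reduced fraction with $|\beta-a/q|<1/(2q^2)$ and $q\le x$ is (essentially) the convergent $a_k/q_k$, so at such scales there is \emph{no} admissible $q$ in your window, and the term $xq_k^{-1/2}(\log x)^4$ is worse than trivial. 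The missing ingredient is a major-arc treatment of small denominators: when the Dirichlet-selected $q$ (with $|\beta-a/q|\le (\log x)^{B}/(qx)$) satisfies $q\le(\log x)^{B}$, write $\beta=a/q+\delta$ and evaluate $\sum_{n\le x}\Lambda(n)e(an/q)e(\delta n)$ by partial summation together with the Siegel--Walfisz theorem, obtaining a main term $\ll x/\phi(q)$ plus an error $\ll(1+|\delta|x)\,x\exp\left(-c\sqrt{\log x}\right)=o(x)$. It is here, and only here, that your observation about convergents does real work: since $\beta$ is a fixed irrational, $\min_{q\le Q_0}\|q\beta\|$ is a positive constant for each fixed $Q_0$, so the selected $q$ must tend to infinity with $x$, whence $x/\phi(q)=o(x)$. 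With this small-$q$ case added, and the Vaughan bound covering $q>(\log x)^{B}$, the argument closes; without it, the proof as written breaks down for irrationals admitting abnormally good rational approximations.
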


\begin{proof}[Proof of Theorem \ref{thm:Mn-global}]
Let $u_n=\log_{10}M_n$. 
By Proposition \ref{prop:benford-ud-mod1}, 
to show that $\{M_n\}$ satisfies Benford's Law, 
it suffices to show that the sequence $\{\log_{10}M_n\}$ is uniformly
distributed modulo $1$. Now,
\begin{equation}
\label{eq:Mn-pn}
\log_{10}M_n=\log_{10}(2^{p_n}-1)=(\log_{10}2)p_n+o(1)\quad (n\to\infty).
\end{equation}
Since $\log_{10}2$ is irrational, Proposition \ref{prop:primes-ud-mod1}
implies that the sequence $\{(\log_{10}2)p_n\}$ is uniformly distributed 
modulo $1$. To conclude that $\{\log_{10}M_n\}$ is also uniformly
distributed modulo $1$, it suffices to observe that 
if $\{u_n\}$ is uniformly distributed modulo $1$, then any sequence
$\{u_n^*\}$ satisfying $u_n^*=u_n+o(1)$ as $n\to\infty$ is also uniformly
distributed modulo $1$. The latter claim follows immediately from the
definition \eqref{eq:def-ud-mod1} of uniform distribution modulo $1$.
\end{proof}

\begin{remark}
\label{rem:vinogradov}
An interesting question is to what extent the error in the uniform
distribution result of Proposition \ref{prop:primes-ud-mod1}
depends on diophantine approximation properties of
$\alpha$. We are not aware of any results of this 
type in the literature, though such error bounds could in principle
be obtained from appropriate exponential sum estimates (e.g.,
Theorem 4.2 in Banks and Shparlinski \cite{banks2009}) via the
Erd\"os-Turan inequality as in the  proof of \cite[Chapter 2, Theorem
3.2]{kuipers1974}. The resulting error bounds would, however, likely be
quite far from being best-possible. 
\end{remark}


\section{Local Distribution Properties}
\label{sec:Mn-local}

\subsection{Empirical data on waiting times. 
Evidence for Conjecture \ref{conj:benford-waits}}

We begin by providing data in support of 
the waiting time conjecture, Conjecture \ref{conj:benford-waits}.
Figure \ref{fig:benford-waits-details} shows the distribution of
waiting times between occurrences of leading digit $1$ 
among the first $10^9$ terms of the sequence of Mersenne numbers, 
along with the analogous distributions for 
three ``control'' sequences: ``random'' Mersenne numbers,
$2^{p_n^*}-1$, where $\{p_n^*\}$ is a sequence of random primes (see
Section \ref{sec:data});  ``smooth'' Mersenne numbers,
defined as $2^{n\log n}$; and the sequence $\{2^{n^2}\}$.

For each of these sequences,
the observed frequencies for waiting times $1,2,\dots,10$
are shown along with the ``theoretical'' frequencies, 
given by 
\begin{equation}
\label{eq:waiting-time-prediction}
P(\text{waiting time between $1$'s is $k$})= p(1-p)^{k-1}, \quad
k=1,2,\dots, 
\end{equation}
where $p=\log_{10}2$ is the Benford probability for leading digit $1$.  
For the Mersenne numbers and random Mersenne
numbers the agreement between the predicted and actual waiting time
distribution is very good.  By contrast, the smooth analogs of $M_n$,
shown in the bottom two charts of Figure \ref{fig:benford-waits-details},
have a noticeably different waiting time distribution.


\begin{figure}[H]
\begin{center}
\includegraphics[width=.45\textwidth]{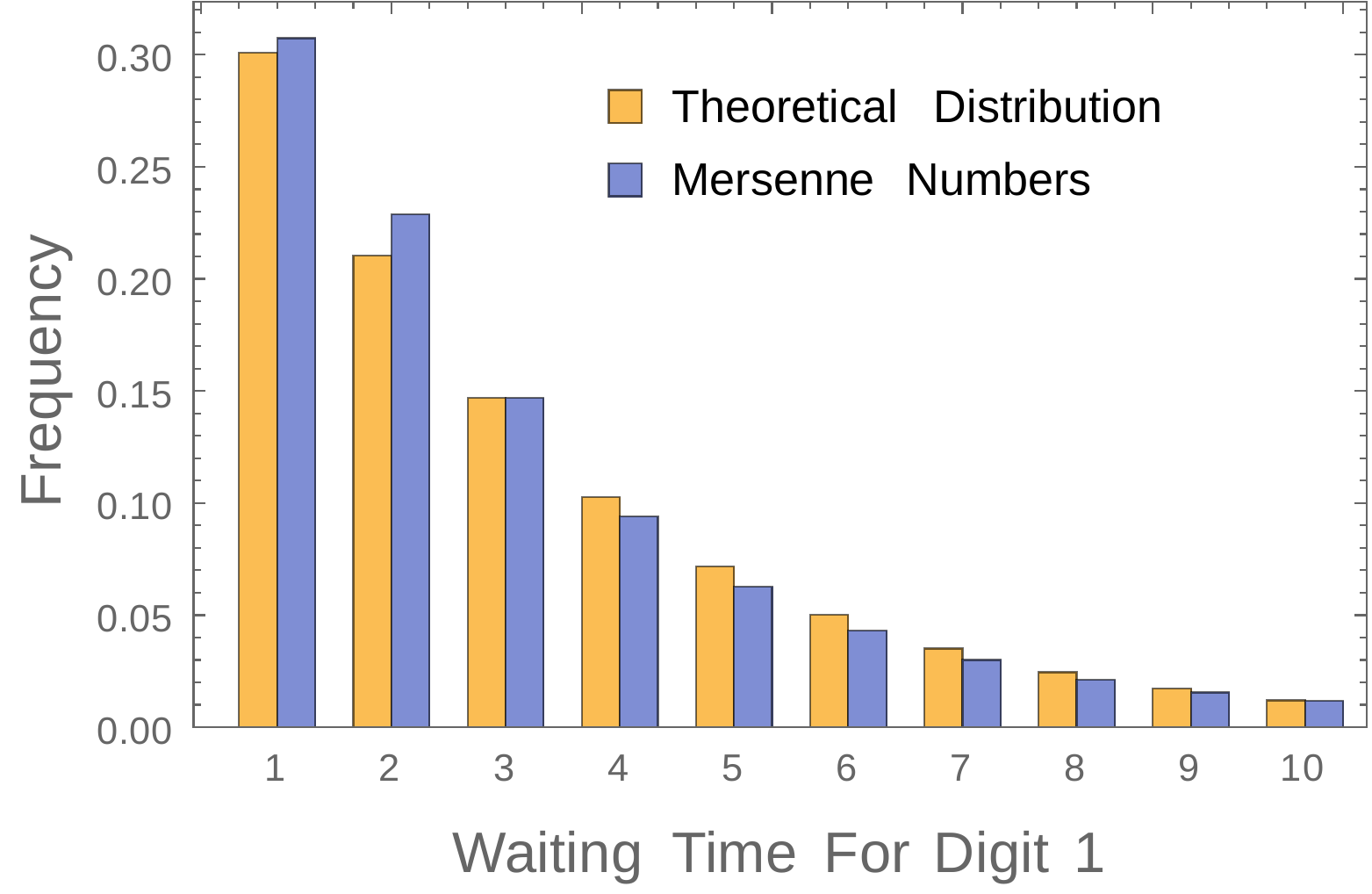}
\hspace{1em}
\includegraphics[width=.45\textwidth]{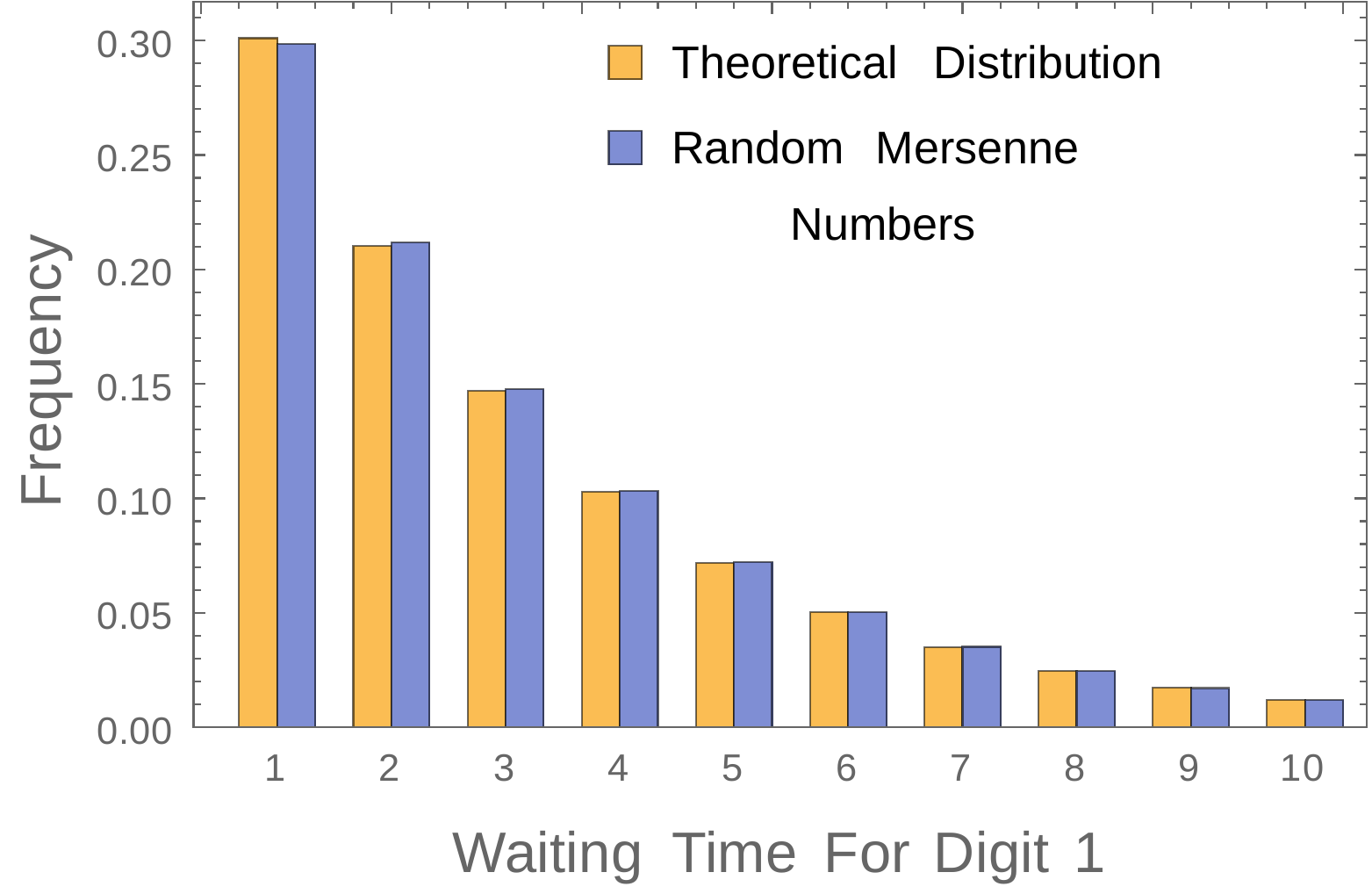}
\\[1ex]
\includegraphics[width=.45\textwidth]{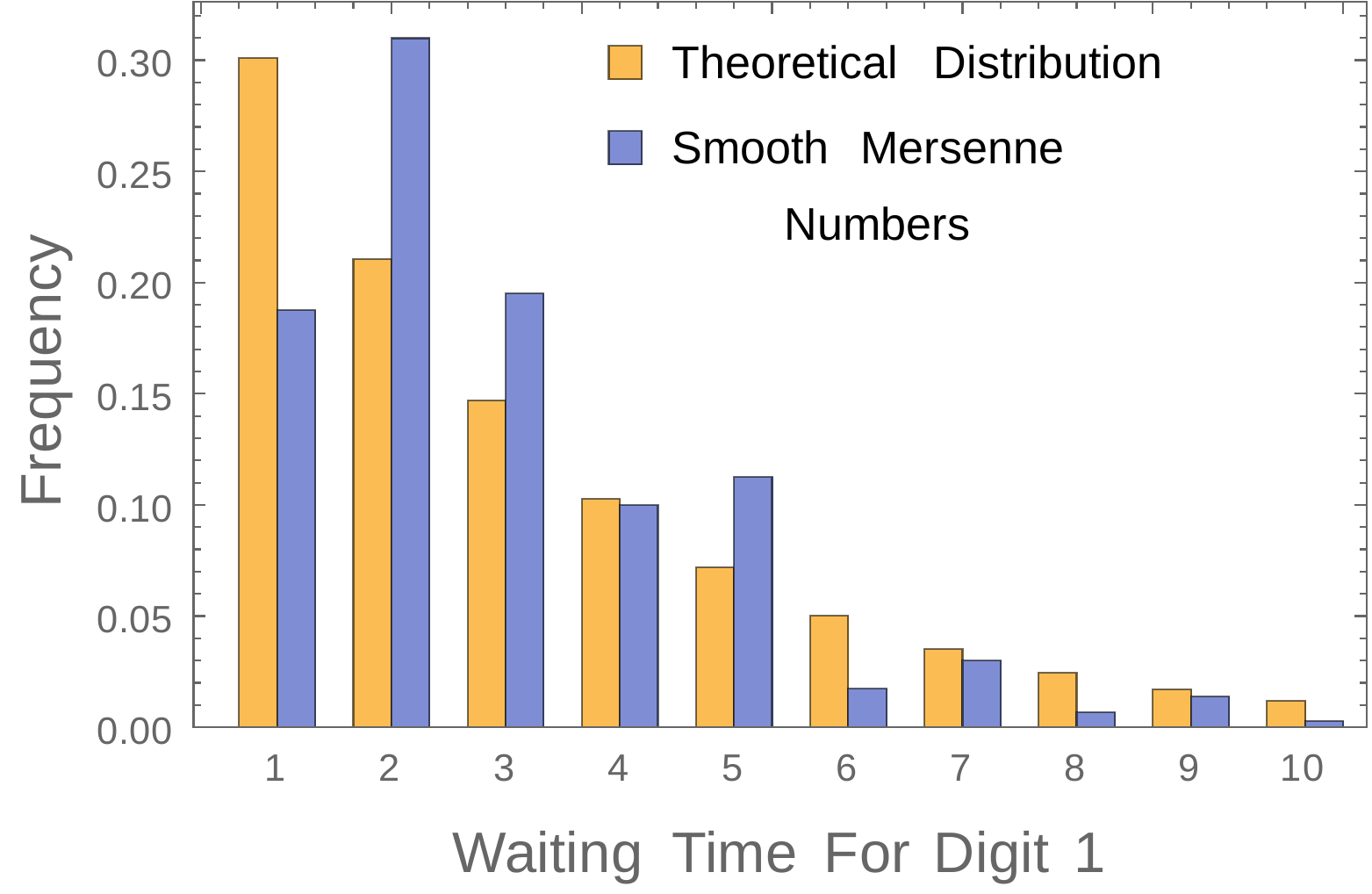}
\hspace{1em}
\includegraphics[width=.45\textwidth]{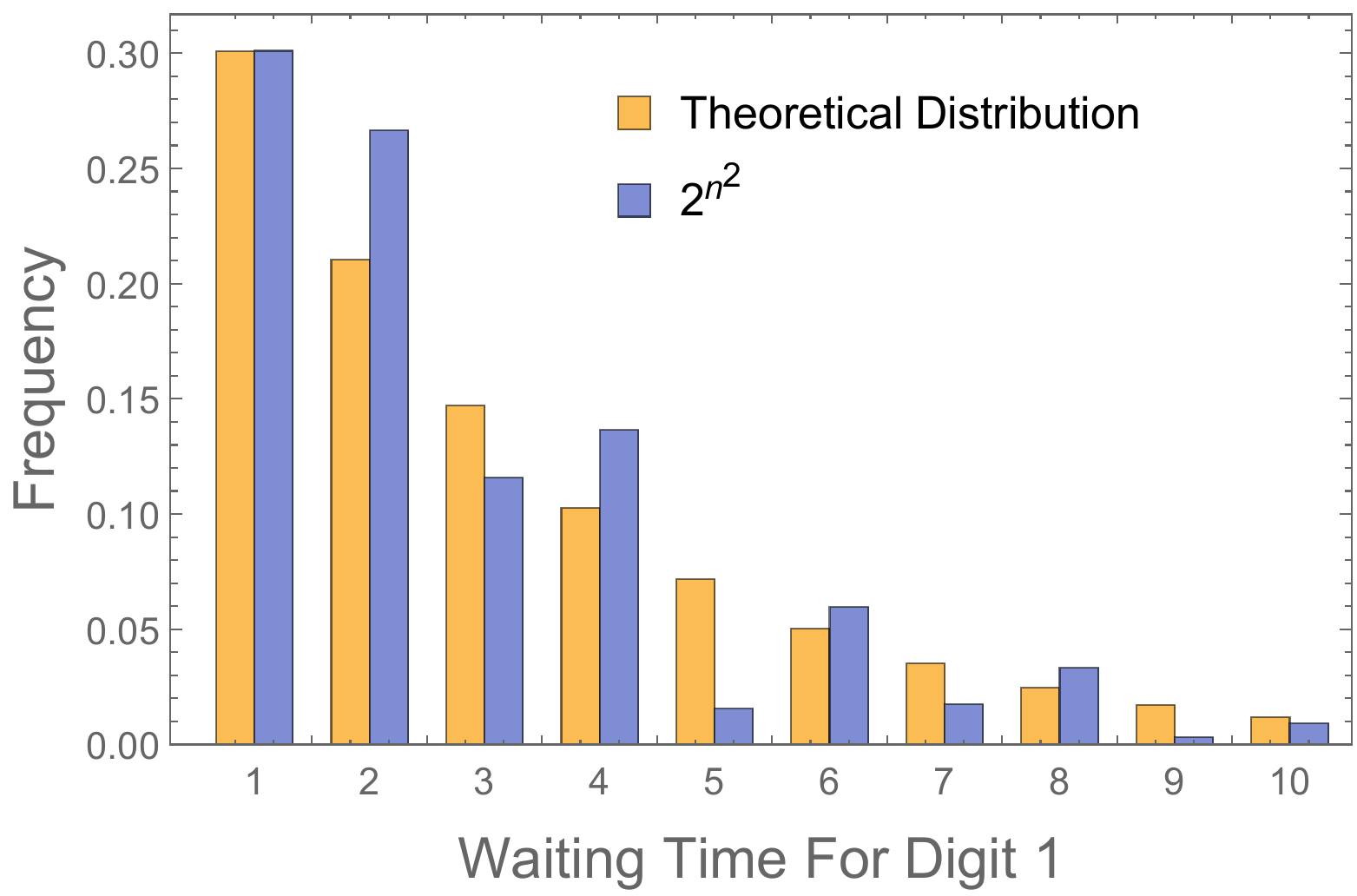}
\end{center}
\caption{The distribution of ``waiting times'' between occurrences of 
leading digit $1$ among the first $10^9$ terms 
of four sequences: the 
Mersenne numbers $M_n=2^{p_n}-1$; ``random'' Mersenne numbers,
$2^{p_n^*}-1$, where $p_n^*$ denote ``random'' primes;  
``smooth'' Mersenne numbers, $2^{n\log n}$; and the 
sequence $\{2^{n^2}\}$. Of these four sequences only the
Mersenne sequence and its random analog 
exhibit a geometric waiting time distribution.
}
\label{fig:benford-waits-details}
\end{figure}

For another perspective on the behavior of the waiting times between
leading digits,  we consider the waiting time frequencies 
as functions of the number of terms in the sequence. 
For the Mersenne sequence the results are shown in  
Figure \ref{fig:mersenne-waits-k-all}:  The observed frequencies are
close to the predicted frequencies at all sample points, and the
agreement improves as the number of terms increases.

\begin{figure}[H]
\begin{center}
\includegraphics[width=.45\textwidth]{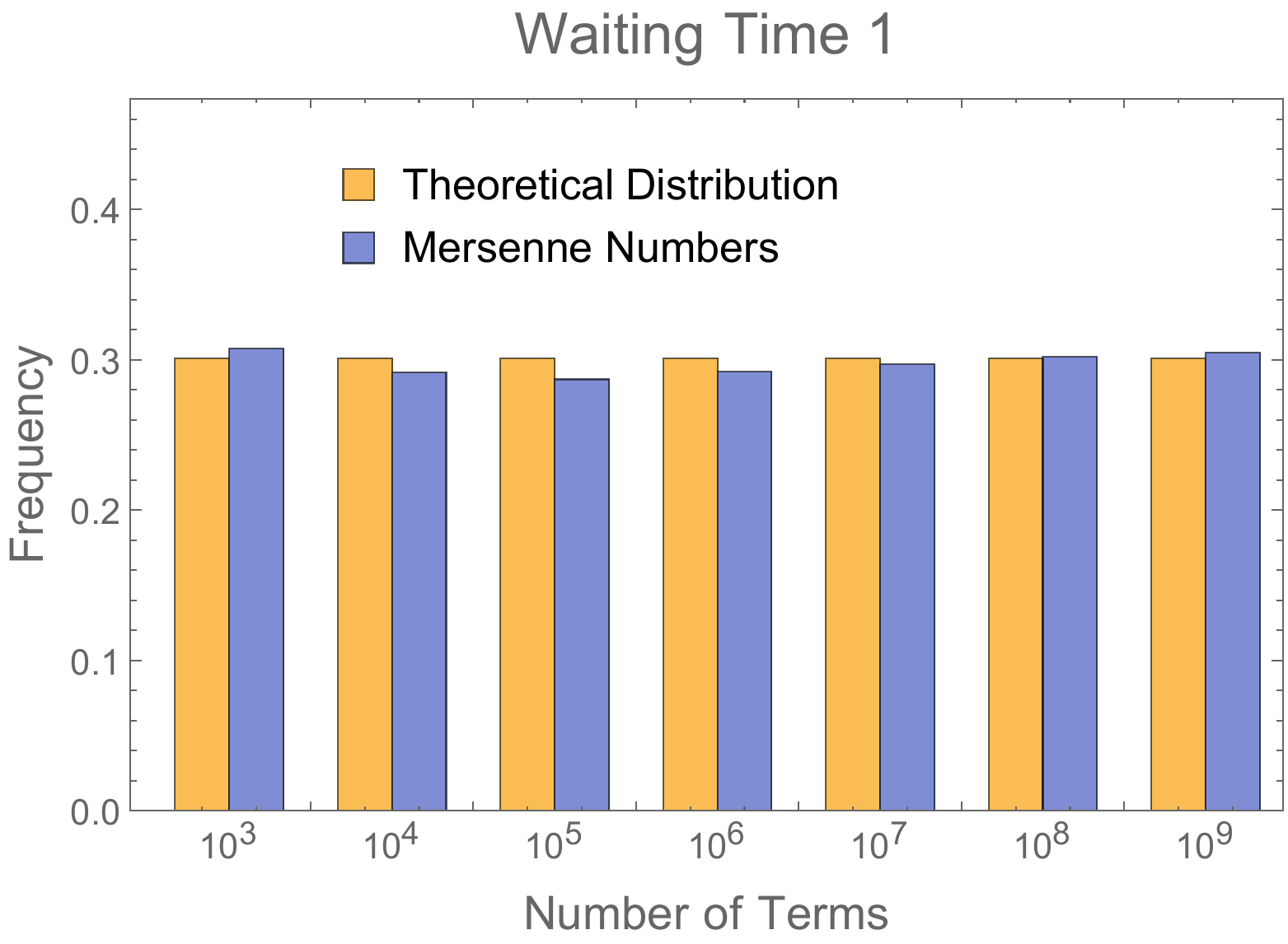}
\hspace{1em}
\includegraphics[width=.45\textwidth]{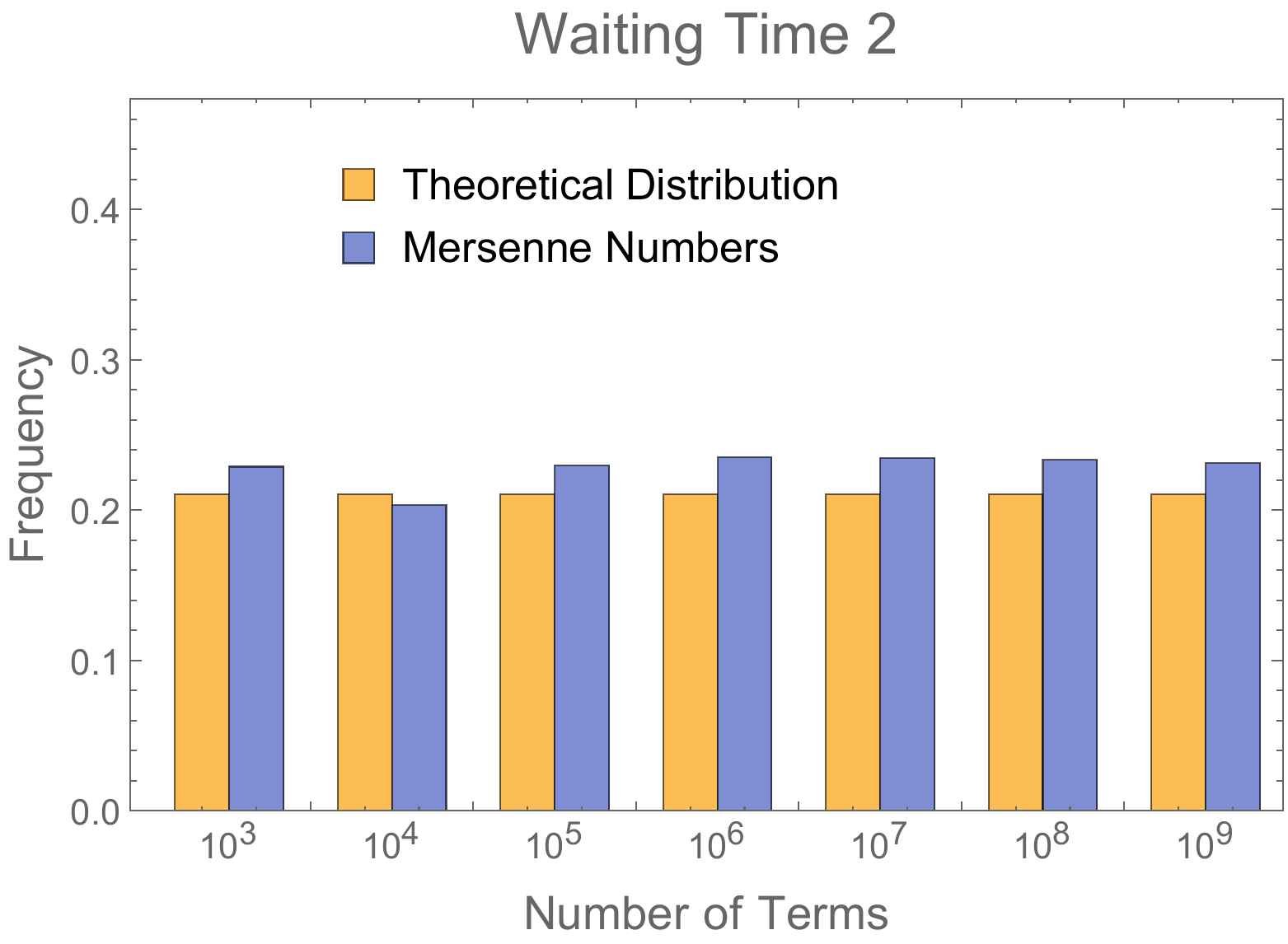}
\\[1ex]
\includegraphics[width=.45\textwidth]{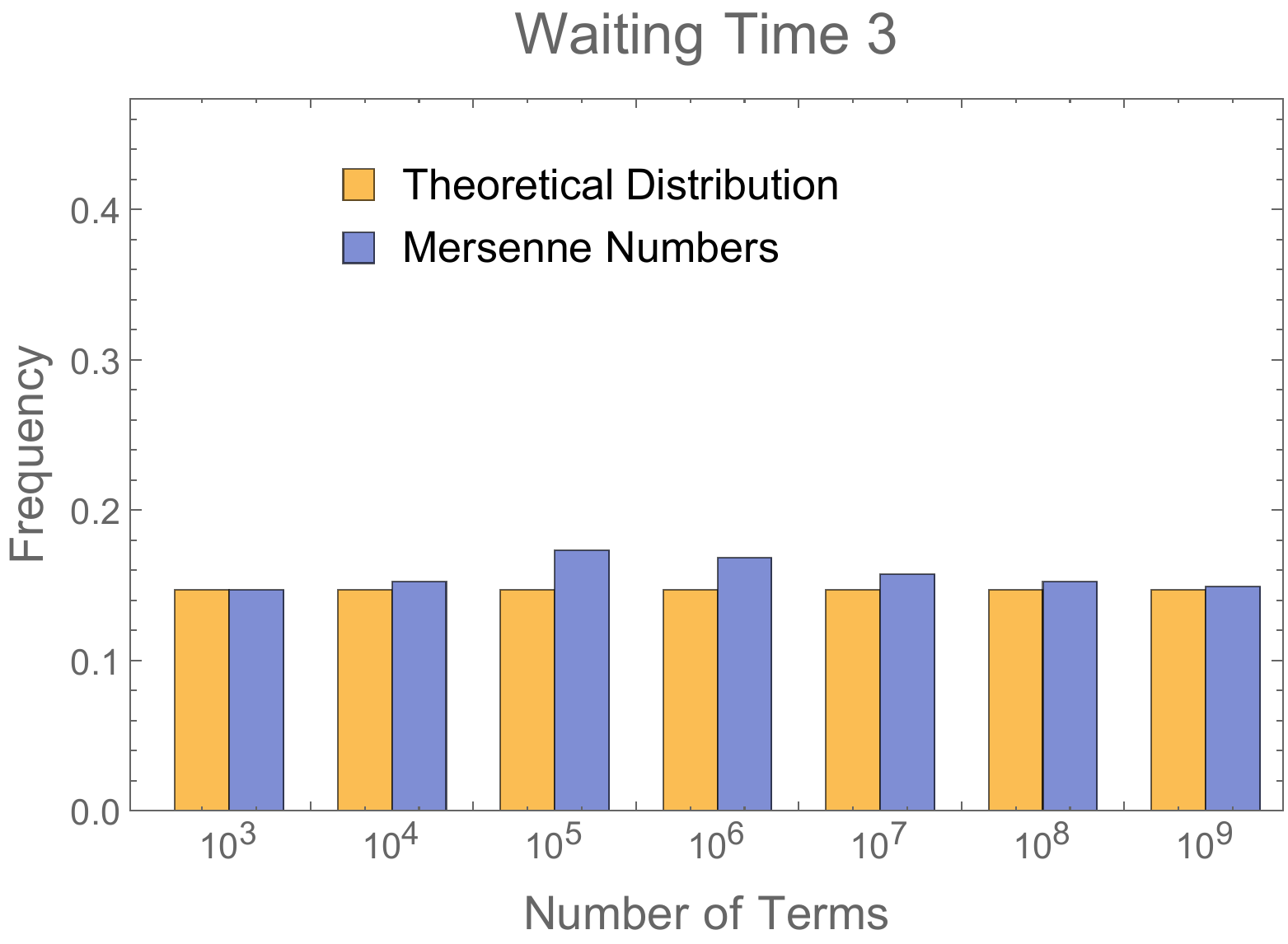}
\hspace{1em}
\includegraphics[width=.45\textwidth]{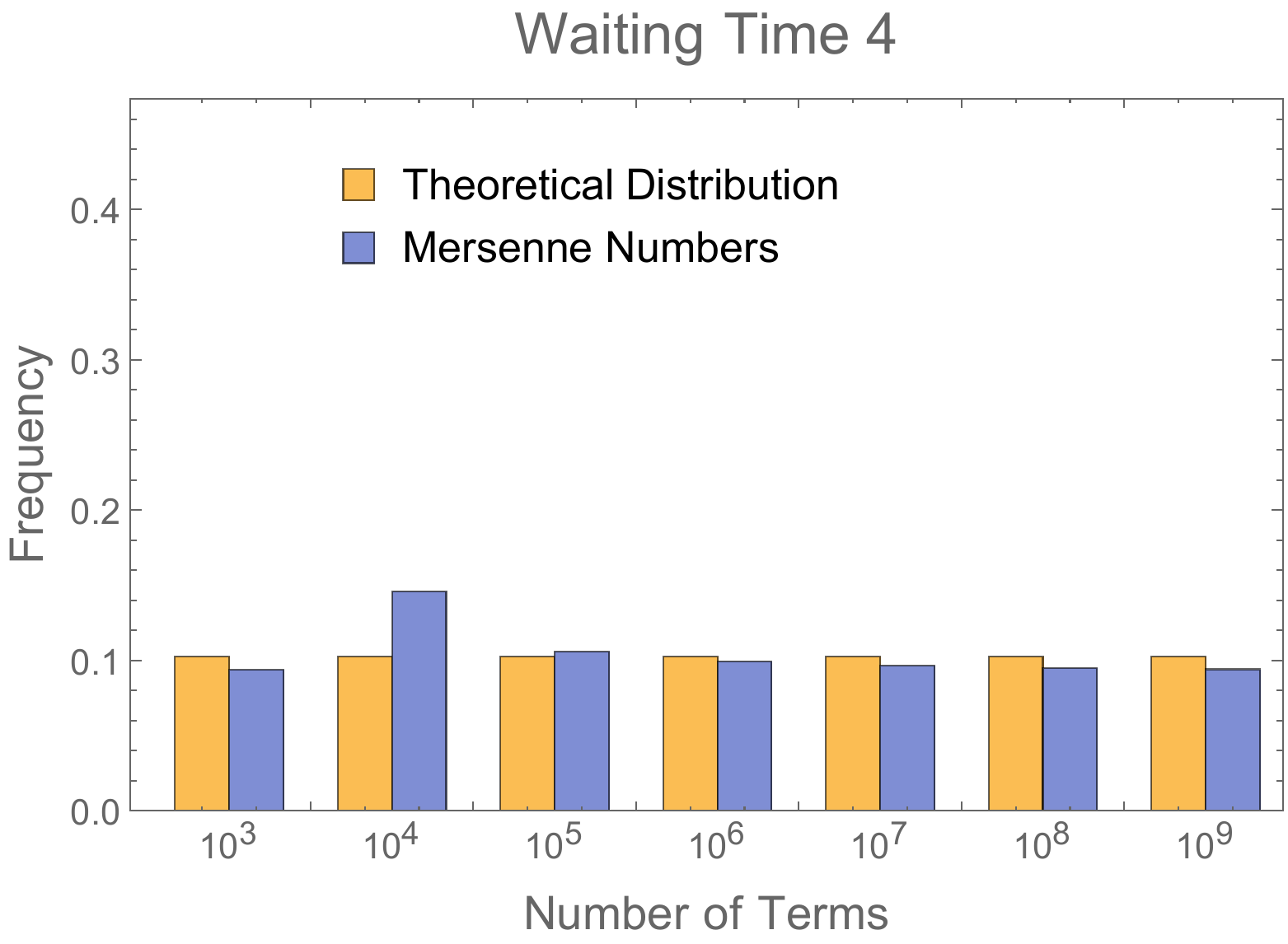}
\end{center}
\caption{Frequencies of waiting time $k$ (where $k=1,2,3,4$)
between occurrences of 
leading digit $1$ among the first $10^i$ Mersenne numbers 
($i=3,\dots,9$), along with the theoretical frequencies given by
\eqref{eq:waiting-time-prediction}.
}
\label{fig:mersenne-waits-k-all}
\end{figure}

Figure \ref{fig:smooth-mersenne-waits-k-all} shows the same set of waiting
time frequencies for the sequence of smooth Mersenne numbers.  
Here the behavior is completely different from the case of Mersenne
numbers. Not only do the observed frequencies differ significantly from
the theoretical distribution, they also exhibit large oscillations
as the number of terms increases, suggesting that a limit distribution
does not exist.

\begin{figure}[H]
\begin{center}
\includegraphics[width=.45\textwidth]{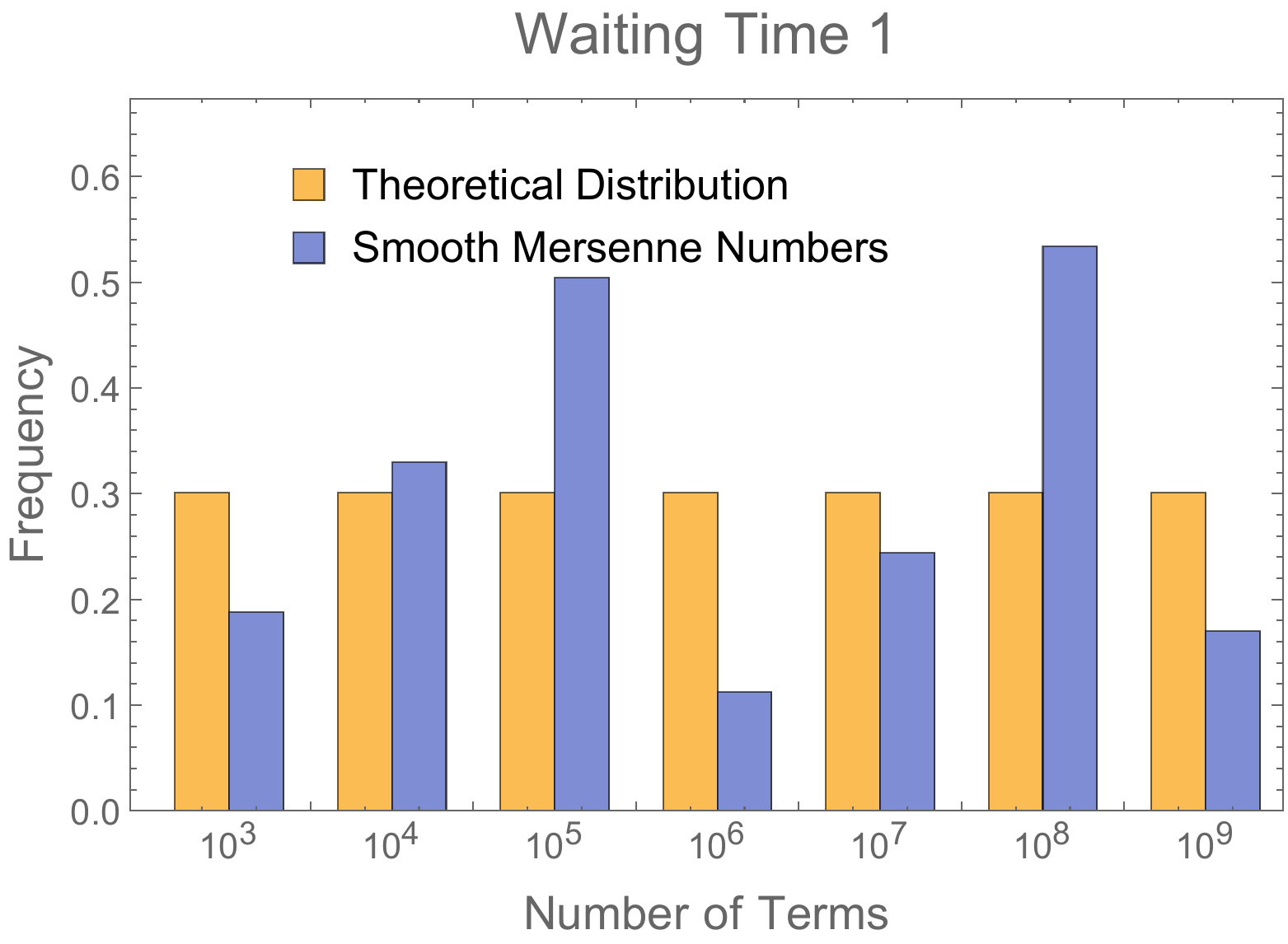}
\hspace{1em}
\includegraphics[width=.45\textwidth]{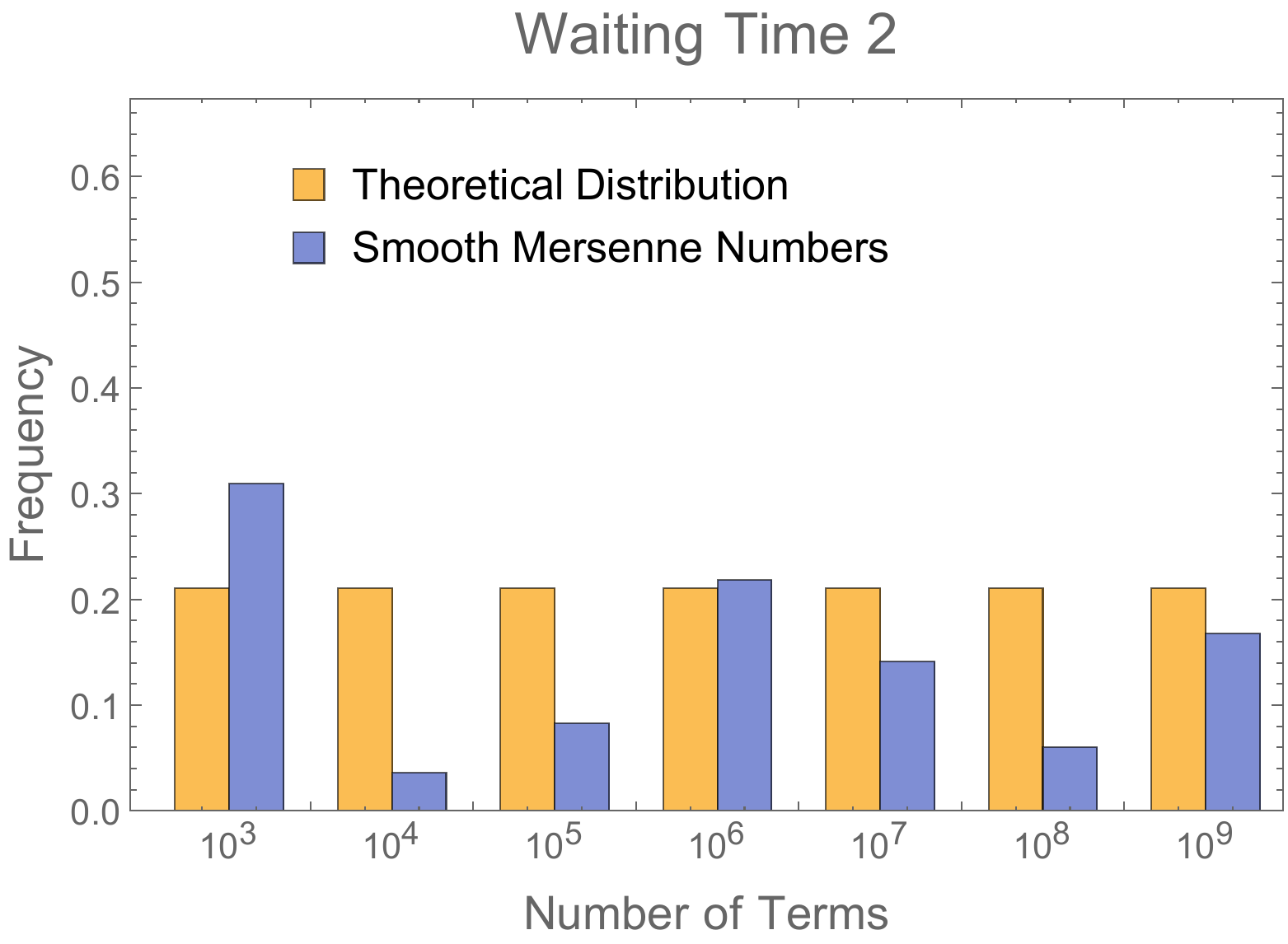}
\\[1ex]
\includegraphics[width=.45\textwidth]{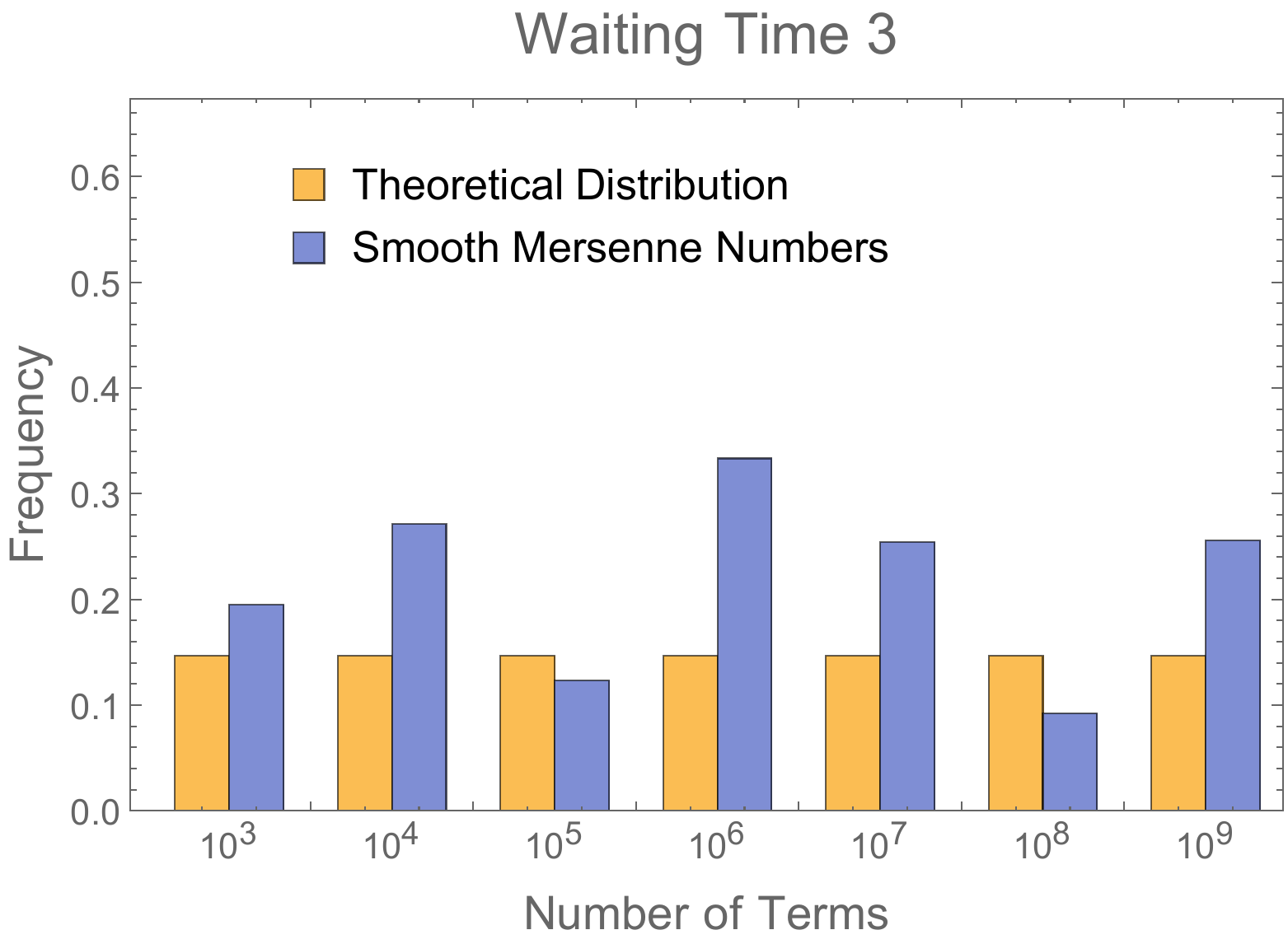}
\hspace{1em}
\includegraphics[width=.45\textwidth]{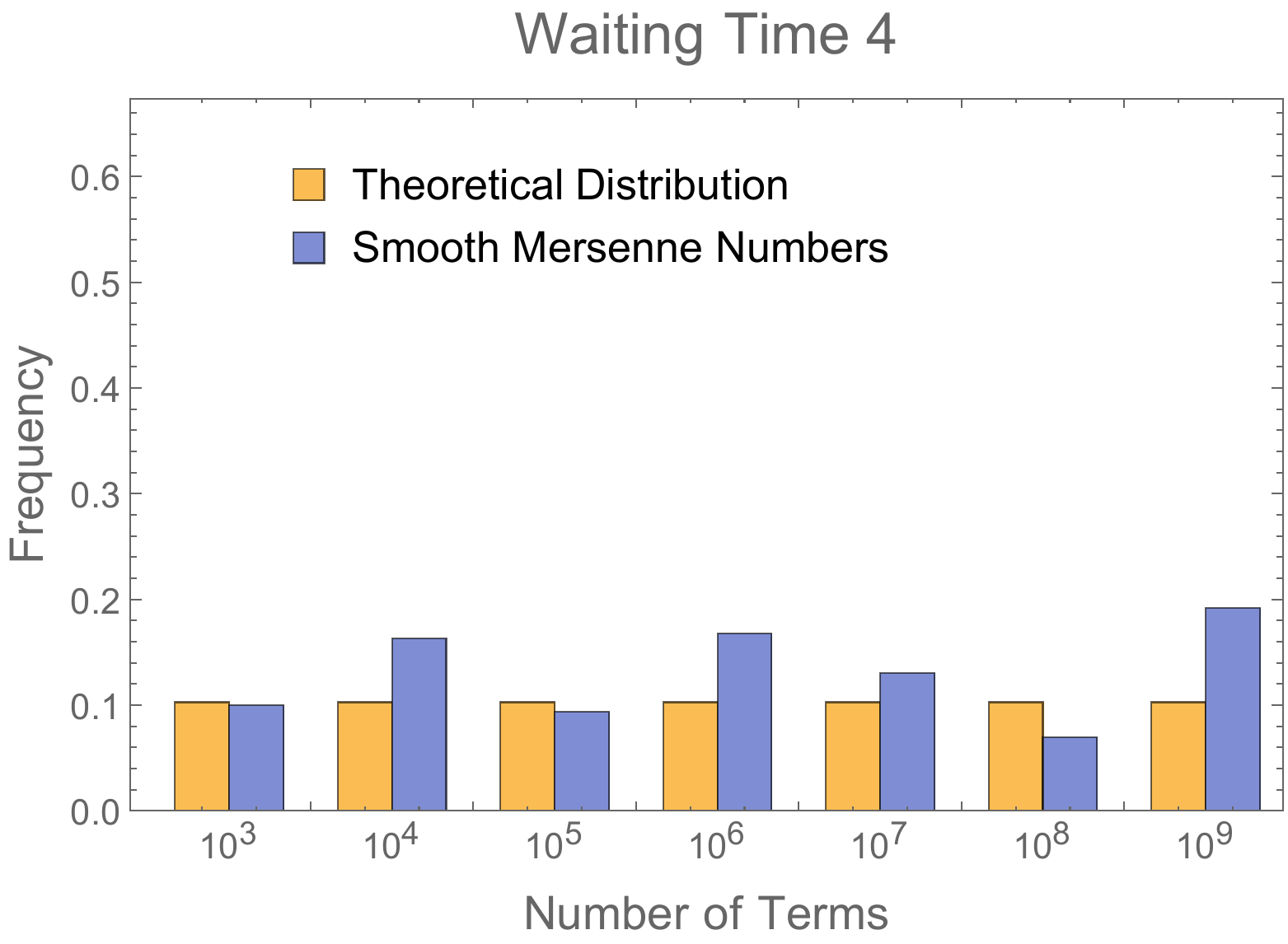}
\end{center}
\caption{Frequencies of waiting time $k$  (where $k=1,2,3,4$) between occurrences of 
leading digit $1$ among the first $10^i$ \emph{smooth} Mersenne numbers 
($i=3,\dots,9$), along with the theoretical frequencies given by
\eqref{eq:waiting-time-prediction}.
}
\label{fig:smooth-mersenne-waits-k-all}
\end{figure}

\subsection{Empirical data on local distribution of leading digits.
Evidence for Conjecture \ref{conj:local-benford}}

We next consider Conjecture \ref{conj:local-benford}, which predicts
that, for any fixed positive integer $k$, $k$-tuples of leading digits of
consecutive terms behave like $k$ independent Benford-distribution random
variables.  That is, 
each tuple $(d_1,\dots,d_k)$ of leading digits is predicted to occur with
asymptotic frequency
\begin{equation}
\label{eq:k-tuples-expected}
P(d_1,\dots,d_k)=\prod_{i=1}^kP(d_i)=\prod_{i=1}^k
\log_{10}\left(1+\frac1{d_i}\right).
\end{equation}

For $k=1$, \eqref{eq:k-tuples-expected} reduces to the (global) Benford
distribution, which we had considered in Section \ref{sec:Mn-global}. The
case $k=2$ therefore is the first test case for local Benford
distribution properties of a sequence. We have focused our numerical
computations on this case as for larger $k$-values the densities become
too small to yield meaningful numerical data within the computable range.
However, indirect evidence that the behavior predicted by Conjecture 
\ref{conj:local-benford} also holds when  $k\ge3$ 
is provided by our data on waiting time distributions:  The frequencies
for waiting time $k$ between occurrences of a given leading digit 
depend on the joint distribution of $(k+1)$-tuples of leading digits. Thus,
the close agreement that we found between observed and predicted waiting
time frequencies for the sequence of Mersenne numbers suggests that this
sequence does indeed have the predicted joint distribution
\eqref{eq:k-tuples-expected}.

Table \ref{table:benford-pairs-combined}
shows the observed and predicted 
frequencies of four pairs $(d_1,d_2)$ of leading digits
among the first $10^9$ terms of the sequences of Mersenne
numbers, random Mersenne numbers, and smooth Mersenne numbers.


\begin{table}[H]
\begin{center}
\begin{tabular}{|c|c|c|c|c|}
\hline
$(d_1,d_2)$ & 
Prediction & 
Mersenne&
Random Mersenne&
Smooth Mersenne
\\
\hline
\hline

$(1,1)$ &
0.09062 &  
0.09252 &
0.08989 &
0.05748 
\\
\hline

$(1,2)$ &
0.05301 &
0.04916 &
0.05072 &
0.07228 
\\
\hline

$(2,1)$ &
0.05300 &
0.05811 &
0.05447 &
0.01503 
\\
\hline

$(2,2)$ &
0.03101 &
0.02905 &
0.02963 &
0.01863
\\
\hline
\end{tabular}
\end{center}
\caption{
Actual versus predicted frequencies of
selected pairs $(d_1,d_2)$ of leading digits 
among the first $10^9$ terms of the sequences of Mersenne numbers,
random Mersenne numbers, 
and smooth Mersenne numbers.
The predicted frequencies are those given 
by \eqref{eq:k-tuples-expected}.
}
\label{table:benford-pairs-combined}
\end{table}


Figure \ref{fig:benford-pairs-combined} shows these frequencies as a
function of the number of terms in the sequence.
For the Mersenne and random Mersenne numbers, the
frequencies indeed seem to converge to their predicted values,
\eqref{eq:k-tuples-expected}, thus lending support to Conjecture
\ref{conj:local-benford}.
On the other hand, for the smooth Mersenne numbers, the behavior is
completely different: The 
frequencies of pairs of leading digits exhibit a distinct oscillating
behavior and do not seem to converge to a limit. Moreover, they
do not seem to be symmetric; for example, the pair $(1,2)$ occurs about
four times as often as the pair $(2,1)$ among the first $10^9$ terms in
the sequence.


\begin{figure}[H]
\begin{center}
\includegraphics[width=.45\textwidth]{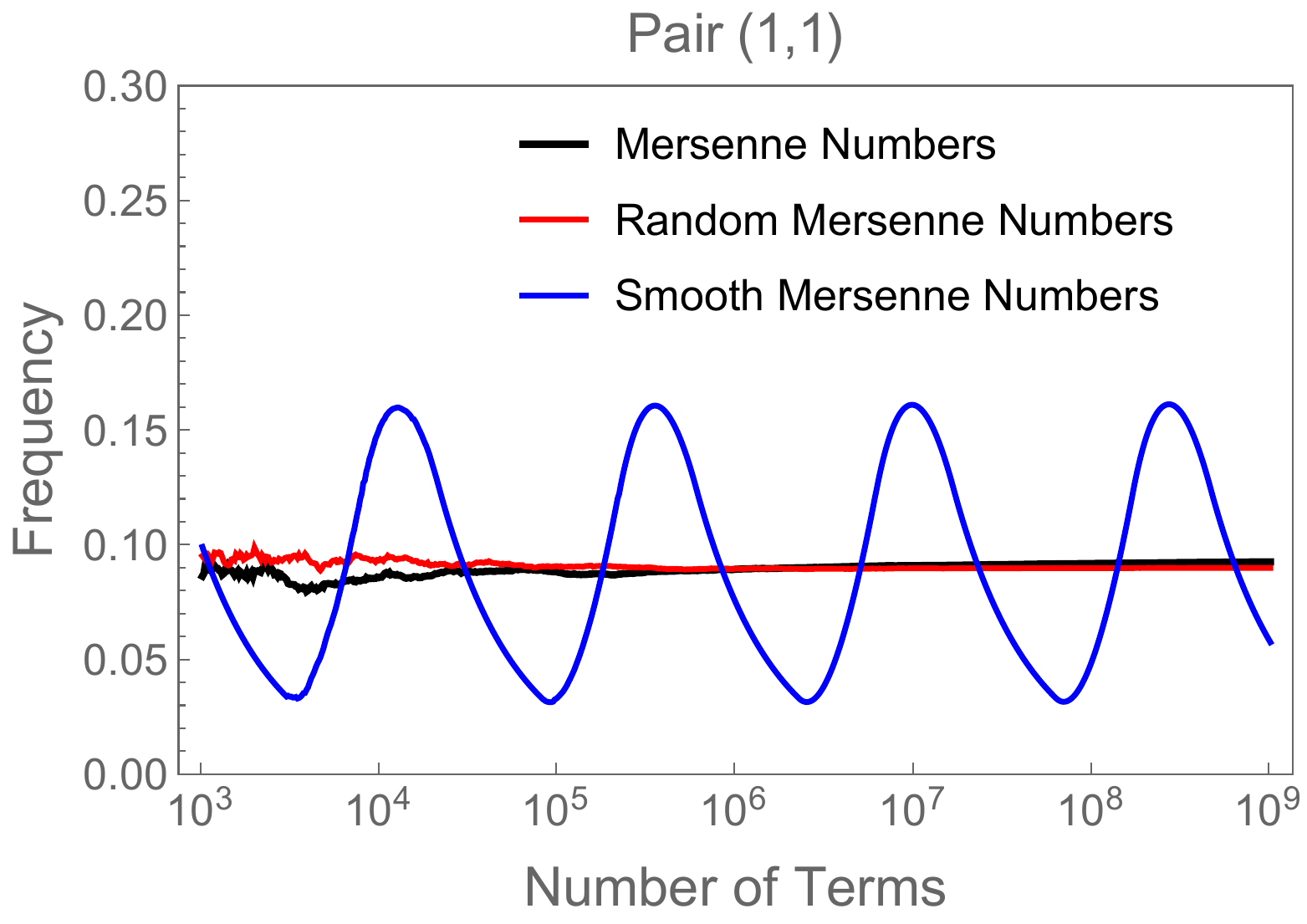}
\hspace{1em}
\includegraphics[width=.45\textwidth]{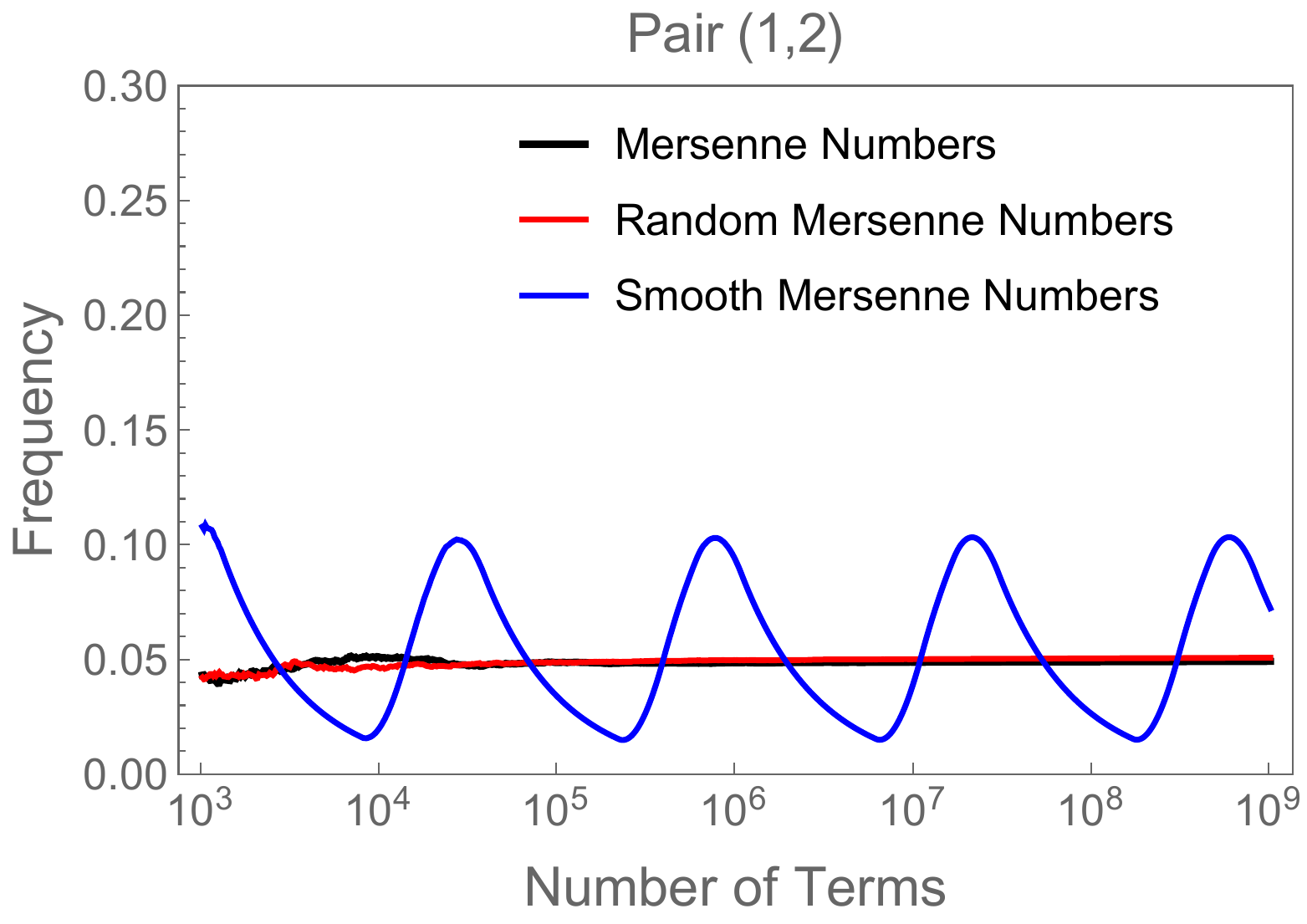}
\\[2ex]
\includegraphics[width=.45\textwidth]{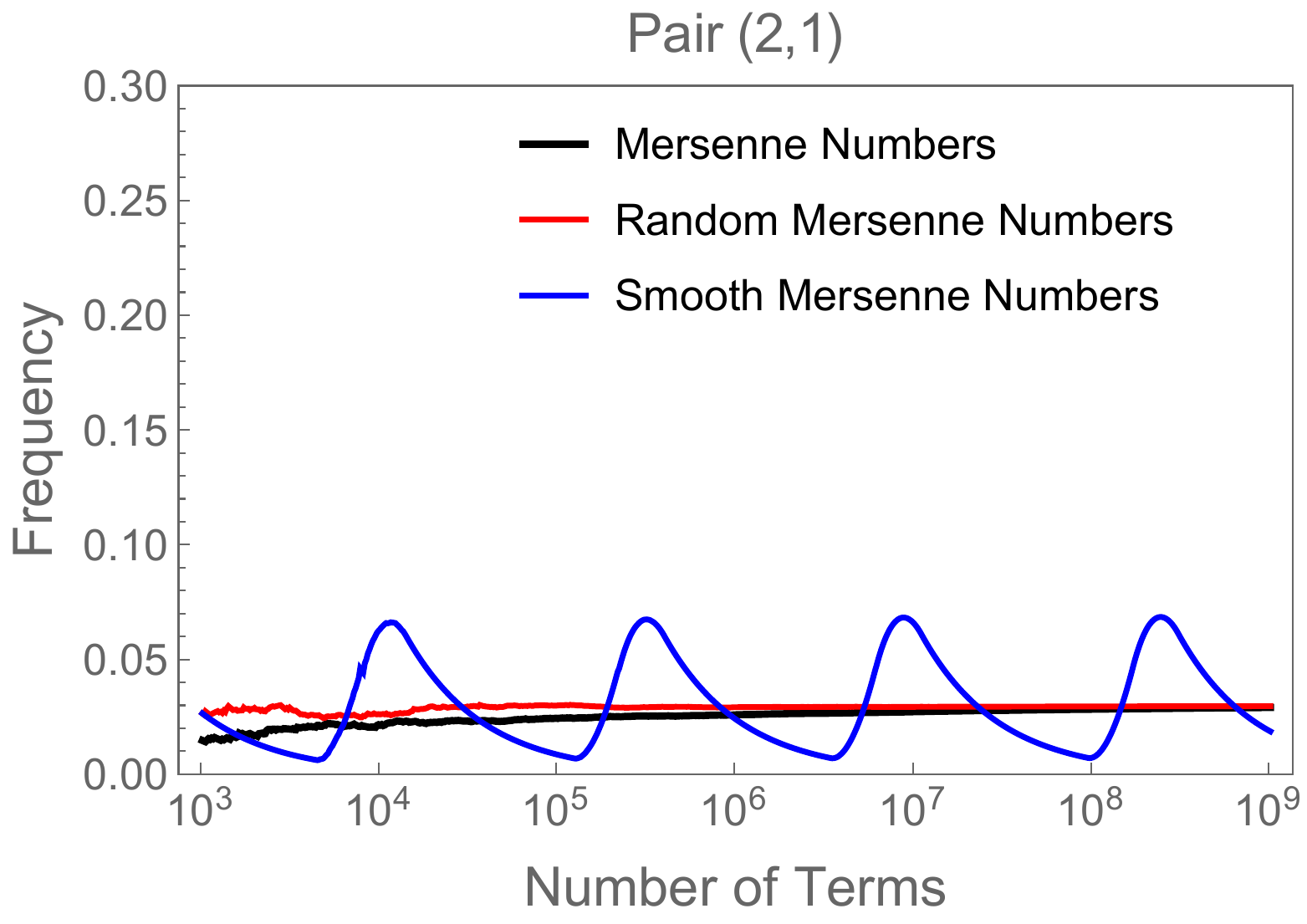}
\hspace{1em}
\includegraphics[width=.45\textwidth]{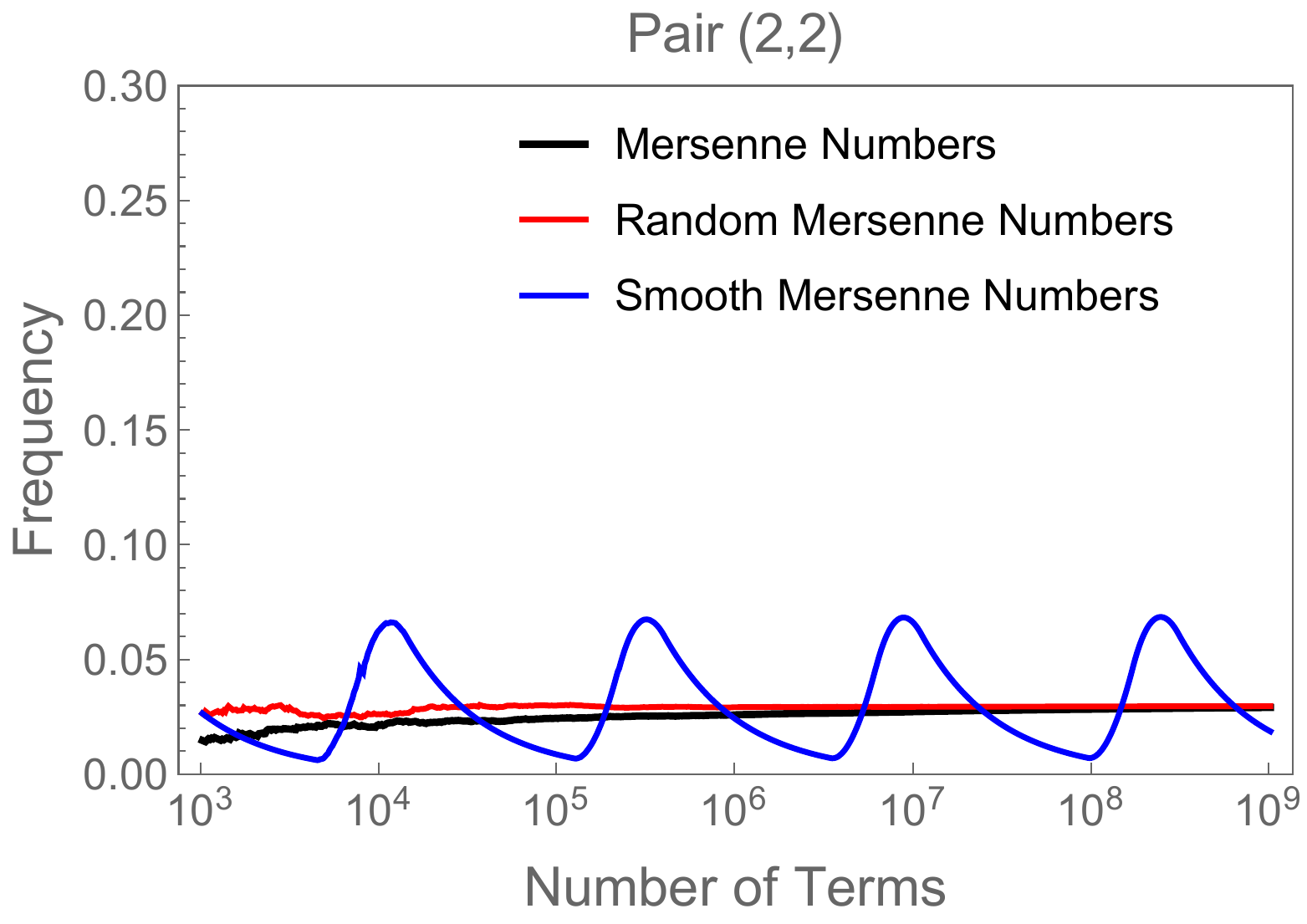}
\end{center}
\caption{%
The distribution of pairs $(d_1,d_2)$ of leading digits of successive
terms in the sequences of Mersenne numbers, random Mersenne numbers, 
and smooth Mersenne numbers.  The behavior of the latter sequence is
distinctly different from that of the former two sequences.  For the  
Mersenne and random Mersenne sequences the frequencies appear to converge
to the expected limit, given in the first column of Table
\ref{table:benford-pairs-combined},
while for the smooth Mersenne
numbers these frequencies exhibit an oscillating behavior.
}
\label{fig:benford-pairs-combined} 
\end{figure}


For additional insight and support for Conjecture
\ref{conj:local-benford}, we consider the \emph{variation distance}
between the observed and predicted pair distributions.  The variation
distance (or \emph{total variation distance}) is a standard
distance measure for probability distributions.
Given discrete probability distributions $P$ and $Q$ on a 
(finite or countable) 
probability space $\Omega$, the variation distance between $P$ and $Q$ 
is defined as 
\begin{equation}
\label{eq:variation-distance-abstract}
\tv(P,Q)=
\frac12\sum_{\omega\in\Omega}|P(\omega)-Q(\omega)|.
\end{equation}
In the case $\Omega$ is the set of $k$-tuples $(d_1,\dots,d_k)$,
$d_i=1,2,\dots,9$, 
and $P$ the predicted distribution on this set given by
\eqref{eq:k-tuples-expected}, this definition reduces to 
\begin{equation}
\label{eq:variation-distance-k-tuples}
\tv(P,Q)=
\frac12\sum_{d_1,\dots,d_k=1}^9\left|
P(d_1)\dots P(d_k)-Q(d_1,\dots,d_k)
\right|,
\end{equation}
where $P(d)=\log_{10}(1+1/d)$ are the individual Benford frequencies for
digit $d$.

Conjecture \ref{conj:local-benford} can be restated in terms of the 
variation distance $\tv$: The conjecture is equivalent to the statement 
$\tv(Q_N,P)\to 0$ as $N\to\infty$,
where $P$ is the predicted distribution given by
\eqref{eq:k-tuples-expected} and 
$Q_N=Q_N(d_1,\dots,d_k)$ denotes the \emph{observed} frequency
of the tuple of leading digits $(d_1,\dots,d_k)$ among the first $N$
Mersenne numbers.

Figure \ref{fig:benford-variation-distance} shows the behavior of this
variation distance for $k=1$ and $k=2$ for the sequences of Mersenne
numbers, random Mersenne numbers, and smooth Mersenne numbers.  For $k=1$
the behavior is essentially the same for all three sequences: In all
cases, the variation distance clearly converges to $0$. This is
consistent with the global Benford distribution properties of these
sequences discussed earlier.

For the case $k=2$, however, significant differences emerge. The most noticeable
difference is that, for the sequence of smooth Mersenne numbers, 
the variation distance does not decay as the number of terms increases,
but oscillates between values of around $0.27$ and $0.3$. By contrast, for
the Mersenne and random Mersenne sequences, the variation distance
decreases as the number of terms increases and appears to converge slowly
to $0$.

\begin{figure}[H]
\begin{center}
\includegraphics[width=.45\textwidth]{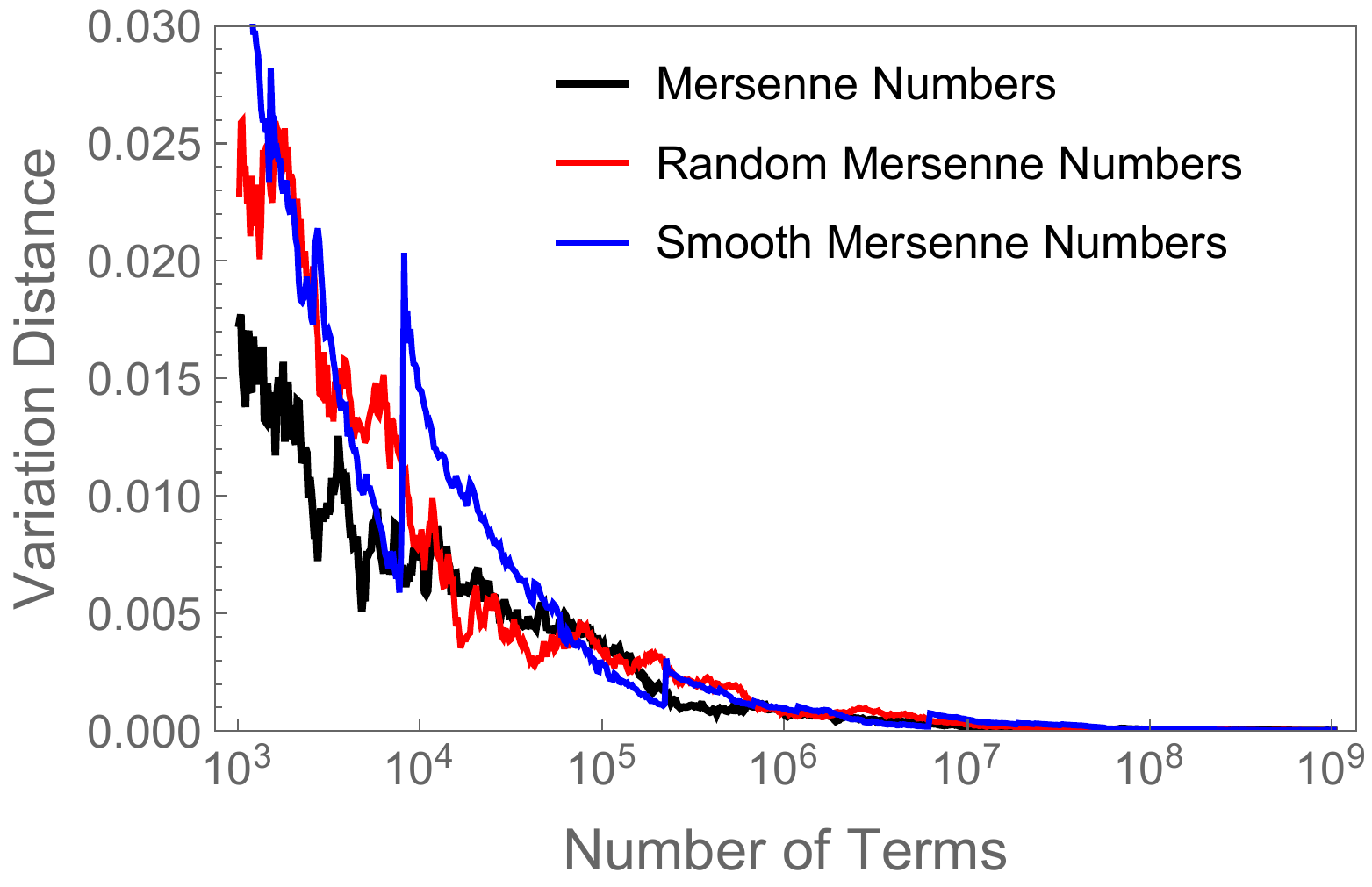}
\hspace{1em}
\includegraphics[width=.45\textwidth]{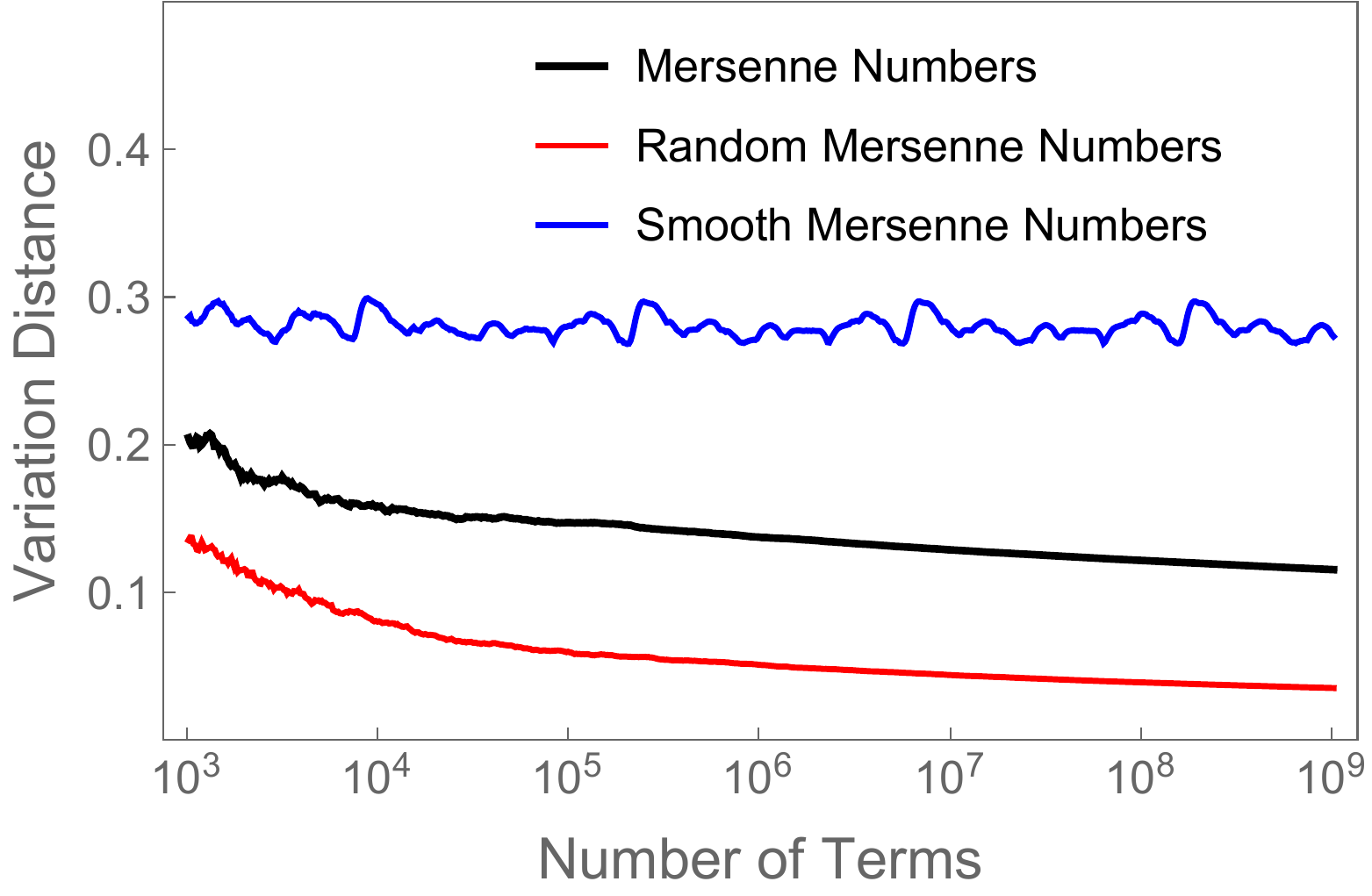}
\end{center}
\caption{%
Variation distance between observed and predicted frequencies of
$k$-tuples of leading digits for $k=1$ (left figure) and $k=2$
(right figure) in the sequence of Mersenne numbers, random Mersenne
numbers, and smooth Mersenne numbers. When $k=1$, 
the variation distance approaches $0$ in all three cases.
When $k=2$, the variation distance appears to approach $0$
for the Mersenne and random Mersenne numbers, but not for the smooth
Mersenne numbers.
}
\label{fig:benford-variation-distance} 
\end{figure}


We conclude this section by commenting on the quality of the approximations 
in the \emph{local} distribution of leading digits of Mersenne numbers
predicted by Conjectures \ref{conj:local-benford} and
\ref{conj:benford-waits}.
As Figure \ref{fig:benford-variation-distance} shows, while
the variation distance for frequencies of  pairs of leading digits 
of Mersenne numbers and of random Mersenne numbers both seem to converge
to $0$, the rate of convergence is noticeably better for random
Mersenne numbers.  A similar difference in the quality of the fit can be
observed in the distribution of pairs of leading digits in Table
\ref{table:benford-pairs-combined}, and in the waiting time frequencies
shown in Figure \ref{fig:benford-waits-details}. 

A plausible explanation for this difference in the quality of fit between
Mersenne numbers and their random analogs is the slow rate of growth of
the differences between consecutive primes, $p_{n+1}-p_n$, along with
divisibility constraints of these differences.  These constraints become
negligible as $n\to\infty$, but they do have an influence when $n$ is
small.  For example, for $n\approx 10^9$, we expect the differences
$p_{n+1}-p_n$ to be of average size $\log(10^9)\approx 20$, 
and we expect differences between random primes to be of similar order of
magnitude.  However, for the prime numbers these differences 
must satisfy additional congruence constraints which significantly 
restrict the set of $k$-tuples of positive integers that may occur as
$k$-tuples of differences between consecutive primes.
For example, apart from finitely many exceptions, all elements in such a
$k$-tuple of consecutive prime differences must be even, and no two
consecutive elements can have the same non-zero remainder modulo $3$.
For random primes there are no such constraints other than the restriction
to integer values, thus causing random Mersenne numbers to have (initially)
better Benford distribution properties on a local scale.


\section{Proof of Theorem \ref{thm:smooth-local-benford}\protect
\footnote{%
This result could also be derived
from \cite[Theorem 2.8]{local-benford}, 
a general result on the local Benford distribution properties of a large 
class of arithmetic sequences. We present here an elementary
argument that does not depend on the theory of uniform distribution modulo $1$
and which suffices  to obtain the assertions of the theorem.}}
\label{sec:smooth-local}

Let $\{a_n\}$ be a sequence of positive real numbers satisfying 
the conditions \eqref{eq:smooth-local-benford-I}
and \eqref{eq:smooth-local-benford-II} of the theorem.
Setting 
\begin{equation*}
u_n=\Delta\log_{10} a_n=\log_{10} a_{n+1}-\log_{10} a_n,
\end{equation*}
these conditions can be written as 
\begin{equation}
\label{eq:smooth-local-benford-Ia}
u_n\to\infty \quad (n\to\infty)
\end{equation}
and 
\begin{equation}
\label{eq:smooth-local-benford-IIa}
u_{n+1}=u_n+O\left(\frac1n\right),
\end{equation}
respectively.\footnote{Note 
that the conditions \eqref{eq:smooth-local-benford-I} and
\eqref{eq:smooth-local-benford-II} of the theorem are independent of the
base of the logarithm chosen in the definition of the $\Delta$ operator. 
For our proof it is convenient to work with base
$10$ logarithms instead of natural logarithms, so we have defined $u_n$
in terms of the base $10$ logarithm.}

Given a positive integer $k$, 
let $n_k$ denote the smallest integer $n$ with $u_n\ge k+1/2$. Then 
\begin{equation}
\label{eq:un-k1}
u_{n_k-1}< k+\frac12\le u_{n_k}.
\end{equation}
By condition \eqref{eq:smooth-local-benford-Ia},
$n_k$ is well-defined provided $k$ is sufficiently large,  which we
shall henceforth assume.

Conditions \eqref{eq:smooth-local-benford-IIa} 
and \eqref{eq:un-k1} imply 
\begin{align*}
u_{n_k}&=k+\frac12+O\left(\frac1{n_k}\right)
\end{align*}
and 
\begin{align*}
u_n&=k+\frac12+O\left(\frac{n-n_k}{n_k}\right)
\quad (n_k\le n< 2n_k).
\end{align*}
It follows that, given $\epsilon>0$,  there exists $\delta=\delta(\epsilon)>0$ 
such that, for sufficiently large $k$,
\begin{equation}
\label{eq:un-k2}
\left|u_n-k-\frac12\right|\le \epsilon\quad (n_k\le n<(1+\delta)n_k).
\end{equation}

We now show that
\eqref{eq:un-k2} is incompatible with the assumption that 
$\{a_n\}$ is locally Benford distributed of order $2$ (or greater), 
or that $\{a_n\}$ has Benford-distributed waiting times.  
Indeed, either of these assumptions implies that for all sufficiently
large $k$ there exist integers $n$ with $n_k\le n<(1+\delta)n_k$ such that 
$D(a_n)=D(a_{n+1})=1$.  The latter condition is equivalent to  
\[
\{\log_{10}a_n\}\in[0,\log_{10}2)
\quad\text{and}\quad
\{\log_{10}a_{n+1}\}\in[0,\log_{10}2).
\]
Since $u_n=\log_{10}a_{n+1}-\log_{10}a_n$, 
it follows that if $D(a_n)=D(a_{n+1})=1$, then
\[
|u_n-m|\le \log_{10}2=0.301\dots \quad \text{for some $m\in\ZZ$.}
\]
This contradicts \eqref{eq:un-k2} upon  choosing $\epsilon=0.1$.  
This completes the proof of Theorem \ref{thm:smooth-local-benford}.

\section{Summary and Concluding Remarks}
\label{sec:conclusion}

In this paper we presented the results of a large scale numerical 
investigation of the distribution of leading digits of the Mersenne
numbers $M_n=2^{p_n}-1$, where $p_n$ is the $n$-th prime number.  Our main
empirical finding is that the leading digits of $\{M_n\}$ behave like a sequence
of independent Benford-distributed random variables, on both a
\emph{global} and a \emph{local} scale.  The observed \emph{local}
behavior is in stark contrast to the behavior exhibited by other
exponentially growing arithmetic sequences,
which typically satisfy Benford's Law on a \emph{global} scale, 
but tend to have very poor \emph{local} Benford distribution properties.

We have provided heuristic and numerical evidence suggesting that it is
the statistical \emph{irregularities} in the distribution of primes that
cause the leading digit distribution of the Mersenne numbers,
to be unusually \emph{regular}.  On the one hand,
replacing the prime numbers $p_n$ by their smooth approximations $n\log
n$ in the definition of $M_n$ yields a leading digit sequence that
behaves similarly on a \emph{global} scale, but has a completely
different, and highly irregular, \emph{local} behavior.  On the other
hand, replacing the prime numbers $p_n$ by appropriately defined ``random
primes'' $p_n^*$ yields a behavior similar to that displayed by the
Mersenne sequence.  

This heuristic explanation for the ``unreasonably'' good fit of Benford's
Law to the sequence of Mersenne numbers is consistent with probabilistic
explanations of Benford's Law in terms of random processes such as
repeated multiplications of random quantities; see, for example, Berger
and Hill \cite{berger2011}, Miller and Nigrini \cite{miller-nigrini2008},
and Chenavier et al. \cite{chenavier2017}.

\bigskip

Our random model for Mersenne numbers is based on the classic Cram\'er
model for the distribution of primes, in which the events ``$n$ is
prime'', $n=3,4,\dots$,  are independent events, occurring with
probability $1/\log n$.  This essentially says that primes occur
according to a Poisson process with interarrival times increasing at a
rate $\log n$. Whether the actual sequence of primes behaves in this way
is still conjectural, but Gallagher \cite{gallagher1976} (see also 
Soundararajan \cite{sound2007}) showed that, 
under the assumption of a generalized prime $k$-tuples conjecture, this
is indeed the case.
The close agreement between the leading digit behavior of the Mersenne
numbers and that of a random analog of these numbers in which
the primes are replaced by random primes can be seen as further evidence
that the primes indeed behave according to a Poisson process.

We note that, for certain questions,  Cram\'er's model does not give
the correct prediction for the behavior of primes. In particular, Maier
\cite{maier1985} showed that the \emph{maximal} gaps between prime
numbers are significantly larger than those predicted by the model. 
However, these results do not affect the conjectured \emph{distribution}
of gaps based on the Poisson model; see the remark before Section 1.1 in
Soundararajan \cite{sound2007}.  

\bigskip

To conclude this section, we comment on possible extensions and
generalizations of the results and conjectures we have presented, and
some open questions suggested by these results.
The most obvious extension is to leading digits with respect to other
bases.  We expect all of our results and conjectures to remain valid for
leading digits in a general base $b$, provided one excludes trivial
situations such as bases that are powers of $2$.

Another natural extension is to sequences of the form $\{a^{p_n}\}$. 
Excluding trivial situations, we expect leading digits in these sequences
to behave like those of the Mersenne numbers, $2^{p_n}-1$. 
For example, the proof of Theorem \ref{thm:Mn-global} shows that any 
sequence $\{a^{p_n}\}$ for which $\log_{10} a$ is irrational satisfies
Benford's Law. Similarly, we  expect sequences of this form to have the same 
\emph{local} Benford distribution properties as the sequence of Mersenne
numbers. 

Interestingly, this does not hold for the sequence of primorial numbers,
$P_n=\prod_{k=1}^n p_k$, which have a similar rate of growth as the
Mersenne numbers, $2^{p_n}-1$, but are more ``smooth'' at a local level.
For example,  we have
$\log M_{n+1}-\log M_n\sim (\log 2)(p_{n+1}-p_n$), while $\log P_{n+1}-\log P_n
= \log p_{n+1}$.  Indeed, using an argument similar to that in the proof
of Theorem \ref{thm:smooth-local-benford}, one can show that the sequence
$\{P_n\}$ is \emph{not} locally Benford distributed of order $2$ or
larger.

The numerical data we presented in support of Conjecture
\ref{conj:benford-error} suggests the possibility that the error in
the Benford approximation to leading digit counts of Mersenne numbers
may be asymptotically normally distributed.  This would represent a
considerable strengthening of Conjecture \ref{conj:benford-error}.

A natural question is whether any of the conjectured results about the
Mersenne numbers can be proved rigorously. Given our current state of
knowledge on the distribution of primes, unconditional results seem
unlikely; however, by following the method of Gallagher \cite{gallagher1976}
it may be possible to prove Conjectures \ref{conj:local-benford}
and \ref{conj:benford-waits} conditionally,  assuming an
appropriate version of the prime $k$-tuples conjecture.


In this paper we focused on the distribution of the \emph{leading} (i.e.,
\emph{leftmost}) digit of $M_n$. One can ask more generally for the distribution
of the $k$-th digit in the base $10$ (or base $b$) expansion of $M_n$.
For fixed $k$, we expect similar results to hold, with the Benford
distribution replaced by an appropriate analog for the $k$-th leading
digit. More interesting is the case when $k$ is
an increasing  function of $n$. In this case, we expect the Benford
phenomenon to gradually fade away as $k$ gets larger and the distribution to
approach a uniform distribution. For example, it seems plausible that the
distribution of the \emph{middle} digits of the sequence of Mersenne
numbers is asymptotically uniform, though results of this type appear to
be out of reach given our current state of knowledge.   On the other hand,
the distribution of the \emph{rightmost} digit (and, more generally, the
$k$-th digit from the right) can be studied using exponential sums, and
some results are known for the sequence of Mersenne numbers and similar
exponentially growing sequences; see Banks et al. \cite{banks2012} and the
references therein.

\subsection*{Acknowledgements.}  
We thank the referees for a careful reading of the paper and a number of 
comments that helped improve the paper.


\providecommand{\bysame}{\leavevmode\hbox to3em{\hrulefill}\thinspace}
\providecommand{\MR}{\relax\ifhmode\unskip\space\fi MR }
\providecommand{\MRhref}[2]{%
  \href{http://www.ams.org/mathscinet-getitem?mr=#1}{#2}
}
\providecommand{\href}[2]{#2}



\end{document}